\newtheorem{theorem}{Theorem}[section]
\newtheorem{proposition}[theorem]{Proposition}
\newtheorem{corollary}[theorem]{Corollary}
\newtheorem{conjecture}[theorem]{Conjecture}
\newtheorem{problem}[theorem]{Problem}
\newtheorem{lemma}[theorem]{Lemma}
\newcommand{\Hom}{\operatorname{Hom}}
\theoremstyle{definition}
\newtheorem{definition}[theorem]{Definition}
\newtheorem{remark}[theorem]{Remark}
\newtheorem{example}[theorem]{Example}
\newcommand{\GHilb}{\Z^{\widehat{G}}}
\newcommand{\Poly}{Q}
\newcommand{\R}{{\mathbb R}}
\newcommand{\C}{{\mathbb C}}
\newcommand{\GL}{\operatorname{GL}}
\newcommand{\Q}{{\mathbb Q}}
\newcommand{\id}{\operatorname{id}}
\newcommand{\PolyMat}{\operatorname{\it PolyMat}}
\newcommand{\Mat}{\operatorname{\it Mat}}
\newcommand{\QSym}{\operatorname{\it QSym}}
\newcommand{\NSym}{\operatorname{\it NSym}}
\newcommand{\im}{\operatorname{im}}
\newcommand{\reg}{\operatorname{reg}}
\newcommand{\Proj}{{\mathbb P}}
\newcommand{\Z}{{\mathbb Z}}
\newcommand{\pd}{\operatorname{pd}}
\newcommand{\Tor}{\operatorname{Tor}}
\newcommand{\N}{{\mathbb N}}
\newcommand{\Sym}{\operatorname{{\it Sym}}}
\newcommand{\Symc}{\operatorname{\overline{\it Sym}}}
\newcommand{\rk}{\operatorname{rk}}
\newcommand{\Power}{\operatorname{Pow}}
\title[(quasi-)symmetric functions of polymatroids]{Symmetric and Quasi-Symmetric Functions associated to Polymatroids}
\author{Harm Derksen}
\thanks{The author is partially supported by the  NSF, grant DMS 0349019.}%
\begin{document}
\maketitle
\begin{abstract}
To every subspace arrangement  ${\bf X}$ we will
associate symmetric functions ${\mathcal P}[{\bf X}]$ and ${\mathcal H}[{\bf X}]$.
These symmetric functions encode the Hilbert series and the minimal projective
resolution of the product ideal associated to the subspace arrangement. They
can be defined for
discrete polymatroids as well.
 The invariant ${\mathcal H}[{\bf X}]$ specializes to the Tutte polynomial ${\mathcal T}[{\bf X}]$.
 Billera, Jia and Reiner recently introduced a quasi-symmetric function ${\mathcal F}[{\bf X}]$
 (for matroids) which behaves valuatively with respect to
 matroid base polytope decompositions. We will define a quasi-symmetric function 
 ${\mathcal G}[{\bf X}]$ for polymatroids which has this property as well. Moreover, ${\mathcal G}[{\bf X}]$
 specializes to ${\mathcal P}[{\bf X}]$, ${\mathcal H}[{\bf X}]$, ${\mathcal T}[{\bf X}]$
 and ${\mathcal F}[{\bf X}]$.
  
\end{abstract}
\setcounter{tocdepth}{1}
\tableofcontents

\section{Introduction}
\subsection{Combinatorial invariants.}
Let $X$ be a set with $d$ elements. Suppose that $V_x, x\in X$ are subspaces
of an $n$-dimensional vector space. Then ${\mathcal A}=\bigcup_{x\in X}V_x$
is called a {\em subspace arrangement}. Let $\Power(X)$ be the set of all
subsets of $X$.
The {\em rank function} 
$\rk:\Power(X)\to \N:=\{0,1,2,\dots\}$
is defined 
by 
$$
\rk(A)=\dim V-\dim\textstyle \bigcap_{i\in A} V_i
$$
for all subsets $A\subseteq X$.

Surprisingly, many topological invariants of the complement $V\setminus {\mathcal A}$
of subspace arrangements are {\em combinatorial}, i.e., they can be expressed
in terms of $n:=\dim V$ and the rank function. 
For example, Zaslavsky (see~\cite{Zas}) proved that number of regions in the
complement of a real hyperplane arrangement
is equal to
$$
(-1)^n\chi(-1)=\sum_{A\subseteq X}(-1)^{\rk(A)+|A|},
$$
where $\chi(q)$ is the {\em characteristic polynomial} of the hyperplane arrangement
defined by
$$
\chi(q)=\sum_{A\subseteq X}q^{n-\rk(A)}(-1)^{|A|}.
$$
For {\em complex hyperplane arrangements}, the cohomology ring
$H^{\star}(V\setminus {\mathcal A})$ is isomorphic to the {\em Orlik-Solomon algebra} (see~\cite{OS}),
which is defined explicitly in terms of the rank function.
For {\em arbitrary real subspace arrangements}, the topological Betti numbers
of the complement $V\setminus {\mathcal A}$ are expressed
in terms of the rank function using the {\em Goresky-MacPherson formula} (see~\cite{GM}). 

One may wonder whether various {\em algebraic} objects associated to a subspace
arrangements are combinatorial invariants. Let $K$ be a base field of characteristic 0,
and denote the coordinate ring of $V$ by $K[V]$. Terao defined
the module of derivations $D({\mathcal A})$ along a hyperplane arrangement
${\mathcal A}$ (see~\cite{Terao}). An arrangement is called {\em free} if $D({\mathcal A})$ is
a free $K[V]$-module. Terao has conjectured that ``freeness''
is a combinatorial property, i.e., whether $D({\mathcal A})$ is free is
 determined by its rank function.  Terao showed that free arrangements
have the property that their characteristic polynomial factors into linear polynomials (see~\cite{Terao}).
One should point out that for example
the {\em Hilbert series} of the module $D({\mathcal A})$ is {\em not} a combinatorial invariant.

In a recent paper, the author found an {\em algebraic} object which is 
a combinatorial invariant for subspace arrangements. 
Let $J_x\subseteq K[V]$ be the vanishing ideal
of $V_x\subseteq V$ and let $J=\prod_{x\in X}J_x$ be the product ideal.
The author showed in \cite{Derksen} that the Hilbert series $H(J,t)$ of $J$ is a combinatorial invariant.
For {\em hyperplane} arrangements the Hilbert series of $J$ is always
equal to $t^d/(1-t)^n$ and is therefore not an interesting invariant.
Let $W$ be an arbitrary vector space and denote its dual by $W^\star$.
We can tensor all the spaces with $W^\star$.
So let $J_x(W)\subseteq K[V\otimes W^\star]$ be the vanishing ideal of the subspace 
$V_x\otimes W^\star$ of
$V\otimes W^\star$ and $J(W)=\prod_{x\in A}J_x(W)$.
Then the Hilbert series $H(J(W),t)$ {\em is} an interesting invariant, even for
hyperplane arrangements. Moreover, since we have an action of $\GL(W)$ on all
the rings and ideals involved, we can define a $\GL(W)$-equivariant
Hilbert series which is a more refined invariant for  subspace arrangement.

\subsection{Symmetric functions}
The ring of symmetric functions is spanned by the Schur symmetric functions $s_\lambda$ where
$\lambda$ runs over all partitions. Let ${\bf X}=(X,\rk)$ where $\rk$ is the
rank function coming from a subspace arrangement $\bigcup_{x\in X} V_x\subseteq V$.
In Section~\ref{sec:2.2},we will define a symmetric function ${\mathcal P}[{\bf X}]$
using a recursive formula (see Definition~\ref{def3}). We  define another symmetric function $
{\mathcal H}[{\bf X}]={\mathcal H}[{\bf X}](q,t)$
with coefficients in $\Z[q,t]$ by
\begin{equation}
{\mathcal H}[{\bf X}](q,t)=\sum_{A\subseteq X}{\mathcal P}[{\bf X}\mid_A]q^{\rk(A)}t^{|A|}.
\end{equation}
Here ${\bf X}\mid_A=(A,\rk\mid_{A})$  can be viewed as the rank function
of the sub-arrangement $\bigcup_{x\in A}V_x\subseteq V$.
The definitions of ${\mathcal P}[{\bf X}]$ and ${\mathcal H}[{\bf X}](q,t)$ make sense even if the rank function $\rk$
does not come from a subspace arrangement. Therefore, these symmetric functions can also be defined for
{\em polymatroids}.
The symmetric function ${\mathcal H}[{\bf X}](q,t)$ essentially encodes Hilbert series of $J$
and the $\GL(W)$-equivariant Hilbert series of $J(W)$.
Also, the {\em minimal free resolutions} of $J$ and  $J(W)$ can be expressed in terms of ${\mathcal H}[{\bf X}](q,t)$.
The symmetric functions behave nicely with respect direct sums of polymatroids, namely
\begin{eqnarray}
{\mathcal P}[{\bf X}\oplus {\bf Y}] &=& {\mathcal P}[{\bf X}]\cdot {\mathcal P}[{\bf Y}]\\
{\mathcal H}[{\bf X}\oplus {\bf Y}](q,t) &=& {\mathcal H}[{\bf X}](q,t)\cdot {\mathcal H}[{\bf Y}](q,t)
\end{eqnarray}
(see Proposition~\ref{prop:mult}).
The {\em Tutte polynomial}  is defined by
\begin{equation}
{\mathcal T}[{\bf X}](x,y)=\sum_{A\subseteq X}(x-1)^{\rk(X)-\rk(A)}(y-1)^{|A|-\rk(A)}.
\end{equation}
The Tutte polynomial was introduced in \cite{Tutte} and generalized
to matroids in \cite{Brylawski} and  \cite{Crapo}. It has the multiplicative
property and it behaves well under matroid duality (see~(\ref{eqTutteDual})). It specializes
to the characteristic polynomial, namely 
$$\chi(q)=q^{n-\rk(X)}{\mathcal T}[{\bf X}](1-q,0).$$
The
coefficients of ${\mathcal T}[{\bf X}](x,y)$ as a polynomial in $x$ and $y$ 
have combinatorial interpretations and
are nonnegative. 
The  invariants ${\mathcal H}[{\bf X}](q,t)$ specializes to the Tutte polynomial.
The functions ${\mathcal P}[{\bf X}]$ and ${\mathcal H}[{\bf X}](q,t)$ do {\em not} seem to behave nicely
under matroid duality.
If the polymatroid ${\bf X}$ is realizable as a subspace arrangement in characteristic 0, then
the coefficients of ${\mathcal P}[{\bf X}]$, ${\mathcal H}[{\bf X}](q,t)$ and some of
their specializations have homological interpretations. Therefore, the coefficients
of these functions satisfy certain non-negativity conditions.

Brylawski defined a graph invariant in \cite{Brylawski2} which he called the {\em polychromate}. 
Sarmiento \cite{Sarmiento} proved that the polychromate is equivalent
to the {\em U-polynomial} studied by Noble and Welch \cite{NW}.
The polychromate and the U-polynomial specialize to Stanley's {\em chromatic symmetric polynomial}  \cite{Stanley}. 
There are graphs whose graphical matroids are the same, that can be distinguised
by the Stanley symmetric function. This means that the Stanley symmetric function,
the polychromatic, and the U-polynomial {\em cannot} be viewed as
 invariants of matroids. 
 
 Inspired by these graph invariants,  Billera, Jia and Reiner defined a {\em quasi-symmetric}
function which {\em is} an invariant for matroids (see~\cite{BJR}). This
invariant  will be discussed later.
\subsection{Polarized Schur functions}
Let us denote the Schur functor corresponding to the partition $\lambda$ by $S_{\lambda}$.
Suppose our base field $K$ has characteristic $0$, $Z$ is a finite dimensional $K$-vector space,
and $Z_1,\dots,Z_d\subseteq Z$ are subspaces.
For a partition $\lambda$ with $|\lambda|=d$ we will define a subspace
$$
S_{\lambda}(Z_1,Z_2,\dots,Z_d)\subseteq S_{\lambda}(Z)
$$
as the subspace spanned by the all $\pi(z_1\otimes \cdots\otimes z_d)$
where $z_i\in Z_i$ for all $i$ and
$$
\pi:\underbrace{Z\otimes Z\otimes \cdots\otimes Z}_d\to S_{\lambda}(Z)
$$
is a $\GL(Z)$-equivariant linear map.

The space $S_{\lambda}(Z_1,\dots,Z_d)$ has various interesting properties
which will be discussed in Section~\ref{sec:5}. For example 
$$S_{\lambda}(\underbrace{Z,Z,\dots,Z}_d)=S_{\lambda}(Z).
$$
Also, permuting the spaces $Z_1,\dots,Z_d$ does not change the subspace $S_{\lambda}(Z_1,\dots,Z_d)$.
Let $V=Z^\star$ be the dual space, and define $V_i=Z_i^{\perp}$ to be the subspace of $V$ orthogonal to $Z_i$.
Consider the subspace arrangement ${\mathcal A}=V_1\cup\cdots \cup V_d\subseteq V$.
Then the dimension of $S_{\lambda}(Z_1,\dots,Z_d)$ can be expressed 
in terms of ${\mathcal H}[{\mathcal A}](q,t)$.
This implies, that the dimension of $S_{\lambda}(Z_1,\dots,Z_d)$
is determined by the numbers
$$
\dim \textstyle \sum_{i\in A} Z_i,\quad A\subseteq \{1,2,\dots,d\}.
$$

\subsection{Quasi-symmetric functions}
Billera, Jia and Reiner defined a quasi-symmetric function ${\mathcal F}[{\bf X}]$ for any matroid ${\bf X}$ in 
\cite{BJR}.
This invariant behaves nicely with respect to direct sums of matroids,
matroid duality. There is also a very natural definition of this invariant in terms of the
combinatorial Hopf algebras studied in \cite{ABS} (see Section~\ref{sec:7.4}). In \cite{BJR} it was proved that
this quasi-symmetric function behaves valuatively with respect to matroid polytope decompositions,
so it can be a useful tool for studying such decompositions.
The quasi-symmetric  ${\mathcal F}[{\bf X}]$ does not specialize to ${\mathcal H}[{\bf X}](q,t)$
because  ${\mathcal F}[{\bf X}]$ cannot distinguish between a loop or an isthmus,
and ${\mathcal H}[{\bf X}](q,t)$ can.
We will show that ${\mathcal F}[{\bf X}]$ {\em does} specialize to ${\mathcal P}[{\bf X}]$. To prove this,
we introduce another quasi-symmetric function ${\mathcal G}[{\bf X}]$ which should be of
interest on its own right. 
First of all, we will choose a convenient basis $\{U_{r}\}$ of the ring of quasi-symmetric functions
where $r$ runs over all finite sequences of nonnegative integers.
A complete chain is a sequence 
$$
\underline{X}:\emptyset=X_0\subset X_1\subset \cdots \subset X_d=X
$$
such that $X_i$ has $i$ elements for all $i$. The rank vector of this chain $\underline{X}$ is defined by
$$
r(\underline{X})=(\rk(X_1)-\rk(X_0),\dots,\rk(X_d)-\rk(X_{d-1})).
$$
Now we define
$$
{\mathcal G}[{\bf X}]=\sum_{\underline{X}}U_{r(\underline{X})}
$$
where $\underline{X}$ runs over all $d!$ maximal chains in $X$. 
We will show that ${\mathcal G}[{\bf X}]$ behaves nicely with respect to direct sums and matroid duality.
It defines a Hopf algebra homomorphism from
the Hopf algebra of polymatroids to the Hopf algebra of quasi-symmetric functions.
But unlike ${\mathcal F}[{\bf X}]$, it can distinguish between a loop 
and an isthmus.
Moreover, ${\mathcal G}[{\bf X}]$
 specializes to the Billera-Jia-Reiner quasi-symmetric function ${\mathcal F}[{\bf X}]$ as well
as to ${\mathcal H}[{\bf X}](q,t)$.
We will also show that ${\mathcal G}[{\bf X}]$ has
the valuative property with respect to polymatroid polytope decompositions in Section~\ref{sec:8}.
We question whether ${\mathcal G}[{\bf X}]$ might be universal with this property.
\subsection*{Acknowledgement}
The author would like to thank Nathan Reading, Frank Sottile, David Speyer for
inspiring discussions and helpful suggestions.

\section{Symmetric functions associated to polymatroids}
In this section we will define the invariants ${\mathcal H}[{\bf X}](q,t)$ and ${\mathcal P}[{\bf X}]$.
\subsection{Discrete polymatroids}

\begin{definition}
A (discrete) {\em polymatroid} is a pair ${\bf X}:=(X,\rk)$ where $X$ is a finite set, and 
$\rk:\Power(X)\to \N=\{0,1,2,\dots\}$
is a function satisfying
\begin{enumerate}
\item $\rk(\emptyset)=0$;
\item $\rk(A)\leq \rk(B)$ if $A\subseteq B$ (nondecreasing);
\item $\rk(A\cup B)+\rk(A\cap B)\leq \rk(A)+\rk(B)$ (submodular).
\end{enumerate}
\end{definition}
If ${\bf X}=(X,\rk)$ is a polymatroid, and $A\subseteq X$ is a subset, then we restrict ${\bf X}$ to $A$
to get a polymatroid ${\bf X}\mid_A:=(A,\rk\mid_A)$. If $A^{\rm c}=X\setminus A$
is the complement, then the {\em deletion} of $A$ in ${\bf X}$ is the
polymatroid ${\bf X}\setminus A:={\bf X}\mid_{A^{\rm c}}=(A^{\rm c},\rk\mid_{A^{\rm c}})$.
The polymatroid ${\bf X}/A:=(A^{\rm c},\rk_{X/A})$ is defined by
$$
\rk_{X/A}(B)=\rk(A\cup B)-\rk(A)
$$
for all $B\subseteq  A^{\rm c}$. We call ${\bf X}/A$ the {\em contraction} of $A$ in ${\bf X}$.

Two polymatroids ${\bf X}=(X,\rk_X)$ and ${\bf Y}=(Y,\rk_Y)$ are {\em isomorphic} if there exists a bijection $\varphi:X\to Y$
such that $\rk_Y\circ\varphi=\rk_X$. A polymatroid ${\bf X}=(X,\rk_X)$ is a 
{\em matroid} if $\rk_X(\{x\})\in \{0,1\}$ for all $x\in X$.
If ${\bf X}=(X,\rk_X)$ is a matroid, then its dual is ${\bf X}^{\vee}:=(X,\rk_X^{\vee})$ where $\rk_X^{\vee}$ is defined by
$$
\rk_X^{\vee}(A):=|A|-\rk_X(X)+\rk_X(X\setminus A)
$$
for all $A\subseteq X$. The Tutte polynomial behaves nicely with respect to matroid duality:
\begin{equation}\label{eqTutteDual}
{\mathcal T}[{\bf X}^{\vee}](x,y)={\mathcal T}[{\bf X}](y,x).
\end{equation}
There is also a formula
expressing ${\mathcal F}[{\bf X}^\vee]$ in terms of ${\mathcal F}[{\bf X}]$ (see~\cite{BJR}).
\begin{definition}
If ${\bf X}=(X,\rk_X)$ and ${\bf Y}=(Y,\rk_Y)$ are polymatroids, then we define their
direct sum by
$$
{\bf X}\oplus {\bf Y}:=(X\sqcup Y,\rk_{X\sqcup Y})
$$
where $X\sqcup Y$ is the disjoint union of $X$ and $Y$ and $\rk_{X \sqcup Y}:X\sqcup Y\to\N$ is defined by
$$\rk_{X\sqcup Y}(A\cup B):=\rk_X(A)+\rk_Y(B)
$$
for all $A\subseteq X$, $B\subseteq Y$.
\end{definition}
The Tutte polynomial satisfies the multiplicative property
\begin{eqnarray}
{\mathcal T}[{\bf X}\oplus {\bf Y}] &=& {\mathcal T}[{\bf X}]\cdot {\mathcal T}[{\bf Y}].
\end{eqnarray}

\subsection{The ring of symmetric functions}
Let 
$$\Sym:=\Z[e_1,e_2,e_3,\dots]\subset \Z[x_1,x_2,x_3,\dots]$$
be the ring of symmetric functions in infinitely many variables, where 
$$
e_k:=\sum_{i_1<i_2<\cdots<i_k}x_{i_1}x_{i_2}\cdots x_{i_k}
$$
is the $k$-th elementary symmetric function.
The monomials in $e_1,e_2,\dots$ form a $\Z$-basis of $\Sym$. 
A partition of $n$ is a tuple $\lambda=(\lambda_1,\lambda_2,\dots,\lambda_r)$ 
of positive integers with
$\lambda_1\geq \cdots \geq \lambda_r\geq 1$ and $|\lambda|:=\lambda_1+\cdots+\lambda_r$ equal to $n$.
Another basis of $\Sym$ is given by the
Schur symmetric functions $s_{\lambda}$ where $\lambda$ runs over all partitions.
For standard results for symmetric functions, we refer to the book \cite{McD}.
The natural grading of $\Z[x_1,x_2,x_3,\dots]$ induces a grading on $\Sym$.
In this grading $e_k$ has degree $k$ and $s_{\lambda}$ has degree $|\lambda|$.
Let 
$$
\Symc=\Z[[e_1,e_2,e_3,\dots]]
$$
be the set of power series in $e_1,e_2,\dots$.
Define
$$
\sigma=1+s_1+s_2+s_3+\cdots\in \Symc.
$$
The inverse is given by
\begin{equation}\label{eq1}
\sigma^{-1}=1-e_1+e_2-e_3+\cdots=1-s_1+s_{11}-s_{111}+\cdots.
\end{equation}
\subsection{The definitions of ${\mathcal P}[{\bf X}]$ and ${\mathcal H}[{\bf X}](q,t)$}\label{sec:2.2}

\begin{definition}\label{def3}
For every polymatroid ${\bf X}=(X,\rk)$ we define a symmetric polynomial ${\mathcal P}[{\bf X}]\in \Sym$ by induction
as follows. If $X=\emptyset$, then
${\mathcal P}[{\bf X}]=1$. If $X\neq\emptyset$, then we may assume that ${\mathcal P}[{\bf X}\mid_A]$ has
been defined for all {\em proper} subsets $A\subset X$.
We define
\begin{equation}\label{eq2}
{\mathcal P}[{\bf X}]=u_0+u_1+\cdots+u_{|X|-1}
\end{equation}
where $u_i\in \Sym$ is homogeneous of degree $i$ for all $i$ such that
\begin{equation}\label{eq3}
\sum_{i=0}^\infty u_i=-\sum_{A\subset X} {\mathcal P}[{\bf X}\mid_A]\sigma^{\rk(X)-\rk(A)}(-1)^{|X|-|A|}.
\end{equation}
Here $A$ runs over all proper subsets of $X$.
\end{definition}
\begin{definition}\label{def4}
For every polymatroid ${\bf X}=(X,\rk)$ we define a symmetric polynomial 
$${\mathcal H}[{\bf X}](q,t)\in \Sym[q,t]=\Z[q,t]\otimes_{\Z} \Sym$$ 
by
\begin{equation}\label{eq4}
{\mathcal H}[{\bf X}](q,t)=\sum_{A\subseteq X} {\mathcal P}[{\bf X}\mid_A]q^{\rk(A)}t^{|A|}.
\end{equation}
\end{definition}
The coefficient of $t^{|X|}$ in ${\mathcal H}[{\bf X}](q,t)$ is
$q^{\rk(X)}{\mathcal P}[{\bf X}]$.
\begin{remark}\label{rem5}
If we evaluate (\ref{eq4}) at $q=\sigma^{-1}$ and $t=-1$, then 
we obtain
$$
{\mathcal H}[{\bf X}](\sigma^{-1},-1)=\sum_{A\subseteq X} {\mathcal P}[{\bf X}\mid_A]\sigma^{-\rk(A)}(-1)^{|A|}\in \Symc.$$
From  (\ref{eq2}) and (\ref{eq3}) it follows that ${\mathcal H}[{\bf X}](\sigma^{-1},-1)$
vanishes in degree  $<d=|X|$.
\end{remark}
\begin{proposition}[multiplicative property]\label{prop:mult}
For polymatroids ${\bf X}=(X,\rk_X)$ and ${\bf Y}=(Y,\rk_Y)$ we have
\begin{equation}\label{eq6}
{\mathcal P}[{\bf X}\oplus {\bf Y}]={\mathcal P}[{\bf X}]\cdot{\mathcal P}[{\bf Y}].
\end{equation}
and
\begin{equation}\label{eq7}
{\mathcal H}[{\bf X}\oplus {\bf Y}](q,t)={\mathcal H}[{\bf X}](q,t)\cdot{\mathcal H}[{\bf Y}](q,t).
\end{equation}
\end{proposition}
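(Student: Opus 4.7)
The plan is to prove (\ref{eq7}) as an immediate consequence of (\ref{eq6}), and then prove (\ref{eq6}) by induction on $|X|+|Y|$ using the reformulation in Remark~\ref{rem5}.

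First, for (\ref{eq7}): subsets of $X\sqcup Y$ are in bijection with pairs $(A,B)$ where $A\subseteq X$ and $B\subseteq Y$, and under this bijection $(\mathbf{X}\oplus \mathbf{Y})\mid_{A\sqcup B}=\mathbf{X}\mid_A\oplus \mathbf{Y}\mid_B$, while the rank and cardinality additive: $\rk(A\sqcup B)=\rk_X(A)+\rk_Y(B)$ and $|A\sqcup B|=|A|+|B|$. Plugging into (\ref{eq4}) and applying (\ref{eq6}) to each term, the double sum factors into a product, which is exactly ${\mathcal H}[\mathbf{X}](q,t)\cdot {\mathcal H}[\mathbf{Y}](q,t)$.

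For (\ref{eq6}), the base case $X=\emptyset$ is trivial since ${\mathcal P}[\emptyset]=1$. For the inductive step, introduce the auxiliary element
$$\Phi[\mathbf{X}]:=\sum_{A\subseteq X}{\mathcal P}[\mathbf{X}\mid_A]\,\sigma^{\rk(X)-\rk(A)}(-1)^{|X|-|A|}\in \Symc.$$
By Remark~\ref{rem5} (multiplying the vanishing identity by $\sigma^{\rk(X)}(-1)^{|X|}$, which is a unit with constant term $\pm 1$), the series $\Phi[\mathbf{X}]$ vanishes in all degrees strictly less than $|X|$. Using the same subset bijection, together with the inductive hypothesis ${\mathcal P}[\mathbf{X}\mid_A\oplus \mathbf{Y}\mid_B]={\mathcal P}[\mathbf{X}\mid_A]\cdot {\mathcal P}[\mathbf{Y}\mid_B]$ applied to all proper subsets $C=A\sqcup B\subsetneq X\sqcup Y$, the defining sum (\ref{eq3}) for ${\mathcal P}[\mathbf{X}\oplus \mathbf{Y}]$ takes the form
$$-\!\!\!\sum_{C\subset X\sqcup Y}\!\!{\mathcal P}[(\mathbf{X}\oplus\mathbf{Y})\mid_C]\,\sigma^{\rk(X\sqcup Y)-\rk(C)}(-1)^{|X\sqcup Y|-|C|}={\mathcal P}[\mathbf{X}]\cdot {\mathcal P}[\mathbf{Y}]-\Phi[\mathbf{X}]\cdot \Phi[\mathbf{Y}],$$
where the isolated first term comes from restoring the missing $(A,B)=(X,Y)$ contribution to the product of sums that equals $\Phi[\mathbf{X}]\cdot \Phi[\mathbf{Y}]$.

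It now suffices to observe two things and truncate to degree $<|X|+|Y|$. First, ${\mathcal P}[\mathbf{X}]\cdot {\mathcal P}[\mathbf{Y}]$ is concentrated in degrees $\leq (|X|-1)+(|Y|-1)$, so its truncation is itself. Second, $\Phi[\mathbf{X}]\cdot \Phi[\mathbf{Y}]$ vanishes in degrees $<|X|+|Y|$ because each factor vanishes below $|X|$ and $|Y|$ respectively. Hence by Definition~\ref{def3} the truncation yields ${\mathcal P}[\mathbf{X}\oplus \mathbf{Y}]={\mathcal P}[\mathbf{X}]\cdot {\mathcal P}[\mathbf{Y}]$, completing the induction. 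The main bookkeeping obstacle is isolating the $(X,Y)$-term correctly so as to convert the sum over proper subsets of $X\sqcup Y$ into a factored product minus the desired answer; once that is done the degree estimate on $\Phi[\mathbf{X}]\cdot\Phi[\mathbf{Y}]$ makes the truncation step immediate.
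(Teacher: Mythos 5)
Your proof is correct and follows essentially the same route as the paper: induction on $|X|+|Y|$, the bijection between subsets of $X\sqcup Y$ and pairs $(A,B)$, isolation of the top term $(A,B)=(X,Y)$, and the degree-vanishing fact of Remark~\ref{rem5} (your $\Phi[\mathbf{X}]$ is just $(-1)^{|X|}\sigma^{\rk(X)}{\mathcal H}[\mathbf{X}](\sigma^{-1},-1)$, so your identity is the paper's equation~(\ref{eq8}) specialized at $q=\sigma^{-1}$, $t=-1$). The only cosmetic difference is the order of the two halves — you deduce (\ref{eq7}) from (\ref{eq6}) at the end, while the paper establishes the $q,t$-identity with an error term first and extracts both statements from it.
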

\begin{proof}
We prove the proposition by induction on $|X|+|Y|$. The case where $X=Y=\emptyset$ is clear. 
So let us assume that $|X|+|Y|>0$. We may assume that
$$
{\mathcal P}[{\bf X}\mid_A\oplus  {\bf Y}\mid_B]={\mathcal P}[{\bf X}\mid_A]\cdot {\mathcal P}[{\bf Y}\mid_B]
$$
for all subsets $A\subseteq X$ and $B\subseteq Y$ such that $A\neq X$ or $B\neq Y$.
\begin{multline}\label{eq8}
{\mathcal H}[{\bf X}\oplus {\bf Y}](q,t)=\sum_{C\subseteq X\sqcup Y}
{\mathcal P}[({\bf X}\oplus {\bf Y})\mid_{C}]q^{\rk_{X\sqcup Y}(C)}t^{|C|}=\\
=
\sum_{A\subseteq X}\sum_{B\subseteq Y}
{\mathcal P}[{\bf X}\mid_A\oplus  {\bf Y}\mid_B]q^{\rk_X(A)+\rk_Y(B)}t^{|A|+|B|}=\\
=
\sum_{A\subseteq X}
{\mathcal P}[{\bf X}\mid_A]q^{\rk_X(A)}t^{|A|}\cdot
\sum_{B\subseteq Y}
{\mathcal P}[{\bf Y}\mid_B]q^{\rk_Y(B)}t^{|B|}+\\
+\big({\mathcal P}[{\bf X}\oplus {\bf Y}]-{\mathcal P}[{\bf X}]{\mathcal P}[{\bf Y}]\big)
q^{\rk_X(X)+\rk_Y(Y)}t^{|X|+|Y|}=\\
{\mathcal H}[{\bf X}](q,t)\cdot{\mathcal H}[{\bf Y}](q,t)+
\big({\mathcal P}[{\bf X}\oplus  {\bf Y}]-{\mathcal P}[{\bf X}]{\mathcal P}[{\bf Y}]\big)
q^{\rk_X(X)+\rk_Y(Y)}t^{|X|+|Y|}
\end{multline}

If we substitute $q=\sigma^{-1}$ and $t=-1$ we get
\begin{multline*}
{\mathcal H}[{\bf X}\oplus {\bf Y}](\sigma^{-1},-1)-{\mathcal H}[{\bf X}](\sigma^{-1},-1)\cdot {\mathcal H}[{\bf Y}](\sigma^{-1},-1)=\\
=(-1)^{|X|+|Y|}
\big({\mathcal P}[{\bf X}\oplus {\bf Y}]-{\mathcal P}[{\bf X}]\cdot {\mathcal P}[{\bf Y}]\big)\sigma^{-\rk_X(X)-
\rk_Y(Y)}
\end{multline*}
The left-hand side has no terms in degree $<|X|+|Y|$ by Remark~\ref{rem5}
and 
$${\mathcal P}[{\bf X}\oplus  {\bf Y}]-{\mathcal P}[{\bf X}]-{\mathcal P}[{\bf Y}]$$
is a symmetric polynomial of degree $<|X|+|Y|$.
It follows that
$$
{\mathcal P}[{\bf X}\oplus {\bf Y}]={\mathcal P}[{\bf X}]\cdot {\mathcal P}[{\bf Y}].
$$
From (\ref{eq8}) follows that
$$
{\mathcal H}[{\bf X}\oplus {\bf Y}](q,t)={\mathcal H}[{\bf X}](q,t)\cdot {\mathcal H}[{\bf Y}](q,t).
$$
\end{proof}
The Tutte polynomial is closely related to the {\em  rank generating function}
$$
{\mathcal R}[{\bf X}](q,t)=\sum_{A\subseteq X}q^{\rk(A)}t^{|A|}
$$
We have 
$$
(x-1)^{\rk(X)}{\mathcal R}[{\bf X}]((y-1)^{-1}(x-1)^{-1},(y-1))={\mathcal T}[{\bf X}](x,y),
$$
so the Tutte polynomial is completely determined by the rank generating function
and vice versa.  The rank generating function makes sense for polymatroids,
not just matroids. The Tutte invariant may not be a polynomial
for polymatroids, because we could have $\rk(A)>|A|$ for some subset $A\subseteq X$.
Define
$$
\Theta:\Sym\to \Q
$$
by
$$
\Theta(s_{\lambda})=\left\{
\begin{array}{ll}
1 &\mbox{if $\lambda=()$;}\\
0 & \mbox{otherwise.}\end{array}\right.
$$
Using base extension, we also get a $\Q(q,t)$-linear map
$$
\Sym\otimes_{\Q}\Q(q,t)\to\Q(q,t)
$$
which we also will denote by $\Theta$.
It is straightforward to prove by induction on $|X|$ that $\Theta({\mathcal P}[{\bf X}])=1$.
\begin{corollary}
We have
$$
\Theta({\mathcal H}[{\bf X}](q,t))=\sum_{A\subseteq X}q^{\rk(A)}t^{|A|}={\mathcal R}[{\bf X}](q,t).
$$
So ${\mathcal H}[{\bf X}](q,t)$ specializes to the rank generating function
and the Tutte polynomial.
\end{corollary}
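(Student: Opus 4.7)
The plan is to reduce the corollary to the single claim $\Theta({\mathcal P}[{\bf X}])=1$ for every polymatroid ${\bf X}$, which is the assertion the author labels \emph{straightforward}. Once this identity is in hand, applying the $\Q(q,t)$-linear map $\Theta$ termwise to (\ref{eq4}) immediately produces
$$\Theta({\mathcal H}[{\bf X}](q,t))=\sum_{A\subseteq X}\Theta({\mathcal P}[{\bf X}\mid_A])\,q^{\rk(A)}t^{|A|}=\sum_{A\subseteq X}q^{\rk(A)}t^{|A|}={\mathcal R}[{\bf X}](q,t).$$
The specialization to ${\mathcal T}[{\bf X}](x,y)$ then follows from the substitution $(x-1)^{\rk(X)}{\mathcal R}[{\bf X}]((y-1)^{-1}(x-1)^{-1},y-1)={\mathcal T}[{\bf X}](x,y)$ recalled just above the corollary.

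To prove $\Theta({\mathcal P}[{\bf X}])=1$, I would induct on $|X|$, the base case $X=\emptyset$ being immediate from ${\mathcal P}[\emptyset]=1=s_{()}$. For the inductive step I first observe that $\Theta$ is nothing but the projection onto the degree-zero component of $\Sym$ and is therefore a ring homomorphism; it extends to $\Symc$ by the same recipe, and since the constant term of $\sigma=1+s_1+s_2+\cdots$ is $s_{()}=1$, we have $\Theta(\sigma)=1$ and consequently $\Theta(\sigma^{k})=1$ for every $k\ge 0$. Because ${\mathcal P}[{\bf X}]$ is by (\ref{eq2}) the truncation in degrees $<|X|$ of $\sum_{i=0}^{\infty}u_{i}$ and $\Theta$ sees only degree zero, we have $\Theta({\mathcal P}[{\bf X}])=\Theta(u_{0})=\Theta\bigl(\sum_{i=0}^{\infty}u_{i}\bigr)$.

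Applying $\Theta$ to (\ref{eq3}), invoking the inductive hypothesis $\Theta({\mathcal P}[{\bf X}\mid_A])=1$ for each proper $A\subset X$ together with $\Theta(\sigma^{\rk(X)-\rk(A)})=1$, yields
$$\Theta({\mathcal P}[{\bf X}])=-\sum_{A\subset X}(-1)^{|X|-|A|}=1,$$
the last equality using $\sum_{A\subseteq X}(-1)^{|X|-|A|}=0$ for $X\ne\emptyset$ to evaluate the sum over proper subsets. I do not anticipate any serious obstacle; the only point meriting mild care is that $\Theta$ be applied meaningfully to the power-series right-hand side of (\ref{eq3}), but this is fine because each term contributes only a finite amount to the degree-zero component, so $\Theta$ of the right-hand side is just $-\sum_{A\subset X}\Theta({\mathcal P}[{\bf X}\mid_A])\cdot 1\cdot(-1)^{|X|-|A|}$ as used above.
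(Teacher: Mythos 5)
Your proposal is correct and follows exactly the route the paper intends: it establishes $\Theta({\mathcal P}[{\bf X}])=1$ by induction on $|X|$ (the step the paper dismisses as ``straightforward'' just before the corollary, using that $\Theta$ is the degree-zero projection, hence multiplicative with $\Theta(\sigma^k)=1$, together with $\sum_{A\subseteq X}(-1)^{|X|-|A|}=0$ for $X\neq\emptyset$), and then applies $\Theta$ termwise to (\ref{eq4}). The details you supply, including the observation that the degree-$<|X|$ truncation retains the degree-zero part so that $\Theta({\mathcal P}[{\bf X}])=\Theta(\sum_i u_i)$, are exactly the ones the paper leaves implicit.
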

\section{Examples}
\begin{example}
Let ${\bf 0}=(\{v\},\rk_{\bf 0})$ be the loop matroid, and ${\bf 1}=(\{v\},\rk_{\bf 1})$
be the co-loop matroid defined by
$$
\rk_{\bf 0}(v)=0\mbox{ and } rk_{\bf 1}(v)=1.
$$
Then we have  $P[{\bf 0}]=P[{\bf 1}]=1$, 
${\mathcal H}[{\bf 0}]=1+t$, ${\mathcal H}[{\bf 1}]=1+qt$, ${\mathcal G}[{\bf 0}]=U_{(0)}$
and ${\mathcal G}[{\bf 1}]=U_{(1)}$.
\end{example}
An important class of matroids is the class of graphical matroids. Suppose that $\Gamma=(Y,X,\phi)$
where $Y$ is the set of vertices, $X$ is the set of edges,
and $\phi:X\to \Power(Y)$ is a map such that $\phi(e)$ is the set of endpoints of the edge $e$.
So $\phi(e)$ has $1$ or $2$ elements for all $e\in X$.
Let $V=K^n$, and denote the coordinate functions by $x_1,\dots,x_n$.
To each vertex $e\in X$, with $\phi(e)=\{i,j\}$
we can associate a subspace $V_e\subseteq V$ defined by
$x_i=x_j$. So $V_e$ is a hyperplane unless $e$ is a loop (i.e., $i=j$), in which
case $V_e=V$.
For $A\subseteq X$, we define $V_A=\bigcap_{a\in A}V_a$.
We define a rank function by
$$
\rk(A)=\dim V-\dim V_A,\qquad A\subseteq X.
$$
Now ${\bf X}=(X,\rk)$ is a matroid.
\begin{example}
Suppose $(Y,X,\phi)$ is an $m$-gon. 
$$
m=6:\xymatrix{
& *=0{\bullet}\ar@{-}[rd]\ar@{-}[ld] &\\
*=0{\bullet}\ar@{-}[d] & & *=0{\bullet}\ar@{-}[d]\\
*=0{\bullet} & & *=0{\bullet}\\
& *=0{\bullet}\ar@{-}[lu]\ar@{-}[ru]}
$$
Then we have
$$
{\mathcal T}[{\bf X}](x,y)=y+x+x^2+\cdots+x^{m-1}\
$$
$$
{\mathcal P}[{\bf X}]=1-s_1+s_{11}-\cdots+(-1)^{m-1}s_{1^{m-1}}.
$$
$$
{\mathcal H}[{\bf X}](q,t)=(1+qt)^m-(qt)^m+q^{m-1}t^{m}{\mathcal P}[{\bf X}],
$$
$$
{\mathcal G}[{\bf X}]=m!U_{(1,1,\dots,1,0)}
$$
\end{example}
\begin{example}\label{exmedges}
Suppose that $(Y,X,\phi)$ is the graph with $2$ vertices and $m$ edges between them.
$$
m=5:
\xymatrix{
*=0{\bullet}\ar@/^1pc/@{-}[r] 
\ar@/^.5pc/@{-}[r]
\ar@{-}[r]
\ar@/_.5pc/@{-}[r]
\ar@/_1pc/@{-}[r]& *=0{\bullet}}
$$
Then we have
$$
{\mathcal T}[{\bf X}](x,y)=x+y+y^2+\cdots+y^{m-1}
$$
\begin{equation}\label{eq:Pmarr}
{\mathcal P}[{\bf X}]=\textstyle 1-{m-1\choose 1}s_1+{m-1\choose 2}s_{2}-\cdots+(-1)^{m-1}{m-1\choose m-1}s_{m-1}.
\end{equation}
\begin{equation}\label{eq:Hmarr}
{\mathcal H}[{\bf X}](q,t)=1+q\sum_{i=1}^m {m\choose i}t^i\left(\sum_{j=0}^{i-1}(-1)^j{i-1\choose j}s_j\right).
\end{equation}
Here, we use the convention $s_0=1$.
To prove the formulas (\ref{eq:Pmarr}) and (\ref{eq:Hmarr}) it suffices to show that
the right-hand side of (\ref{eq:Hmarr}) vanishes in degree $<m$ if we substitute $q=\sigma^{-1}$ and $t=-1$. If we make these substitutions, we get (using the combinatorial
identity  \cite[\S1.2.6, (33)]{Knuth})
\begin{multline}
1+\sigma^{-1}\sum_{i=1}^m {m\choose i}(-1)^i\left(\sum_{j=0}^{i-1}(-1)^j{i-1\choose j}s_j\right)=\\
1+\sigma^{-1}\sum_{j=0}^{m-1}s_j\sum_{i=j+1}^{m}(-1)^{i+j}{m\choose i}{i-1\choose j}=\\
1+\sigma^{-1}\sum_{j=0}^{m-1}s_j\left((-1)^{j+1}{-1\choose j}+
\sum_{i=0}^{m}(-1)^{i+j}{m\choose i}{i-1\choose j}\right)=
1-\sigma^{-1}\sum_{j=0}^{m-1}s_j.
\end{multline}
This vanishes in degree $<m$ because $\sigma=1+s_1+s_2+\cdots$.

We also have 
$$
{\mathcal G}[{\bf X}]=m!U_{(1,0,0,\dots,0)}.
$$

\end{example}
The following example appeared in \cite{Brylawski2}, and was pointed out to the author by Nathan Reading.
\begin{example}
The Gray graphs
$$
G_1=
\xymatrix{
& *=0{\bullet}\ar@{-}@/^/[ld]\ar@{-}@/_/[ld]\ar@{-}[rd] & \\
*=0{\bullet}\ar@{-}[rr] \ar@{-}[rd]\ar@{-}[dd]& & *=0{\bullet}\ar@{-}[ld]\ar@{-}[dd]\\
& *=0{\bullet}\ar@{-}[ld] & \\
*=0{\bullet}\ar@{-}[rr] &  & *=0{\bullet}
},
G_2=\xymatrix{
& *=0{\bullet}\ar@{-}[rd]\ar@{-}[ld] & \\
*=0{\bullet}\ar@{-}[rr] \ar@{-}@/_/[rd]\ar@{-}@/^/[rd]\ar@{-}[dd]& & *=0{\bullet}\ar@{-}[dd]\\
& *=0{\bullet}\ar@{-}[ld]\ar@{-}[rd] & \\
*=0{\bullet}\ar@{-}[rr] &  & *=0{\bullet}
}$$
have the same Tutte polynomial, namely
\begin{multline*}
{\mathcal T}[G_1](x,y)={\mathcal T}[G_2](x,y)=y^5+4y^4+xy^4+x^2y^3+6xy^3+7y^3+x^3y^2+6y^2+6x^2y^2+\\+13xy^2+
10xy+
x^4y+13x^2y+6x^3y+2y+2x+7x^3+x^5+4x^4+6x^2.
\end{multline*}
However, the coefficients of $s_{2,2,2}$ in ${\mathcal P}[G_1]$ and ${\mathcal P}[G_2]$
are  $56$ and $55$ respectively.
\end{example}
The examples below appeared in the survey of Brylawski and Oxley in \cite[pp. 197]{White}, and were
also featured in \cite{BJR}.
\begin{example}
Consider $6$ points in $\Proj^2=\Proj^2(\C)$ according to the diagram below 
\begin{equation}\label{eq:diagram1}
\xymatrix{
*=0{\bullet}\ar@{-}[r] & *=0{\bullet}\ar@{-}[r] & *=0{\bullet}\\
*=0{\bullet}\ar@{-}[r] & *=0{\bullet}\ar@{-}[r] & *=0{\bullet}}
\end{equation}
Here $3$ or more points are collinear if and only if they lie on a line segment in the diagram.
Dualizing gives us $6$ projective lines in $\Proj^2$ which can be viewed as 6 hyperplanes in $\C^3$.

Denote the matroid associated with this arrangement by ${\bf X}$.
Consider $6$ points in $\Proj^2$ according to the diagram below 
\begin{equation}
\xymatrix{
*=0{\bullet}\ar@{-}[d] &  & *=0{\bullet}\\
*=0{\bullet}\ar@{-}[d] & &\\
*=0{\bullet}\ar@{-}[r] & *=0{\bullet}\ar@{-}[r] & *=0{\bullet}}.
\end{equation}
Again, dualizing gives a hyperplane arrangement in $\C^3$.
Denote the matroid associated with this arrangement by ${\bf Y}$.

Then ${\bf X}$ and ${\bf Y}$ give nonisomorphic matroids, but
they have the same Tutte polynomial and the same Billera-Jia-Reiner quasi-symmetric function (see~\cite{BJR}).
Moreover,
\begin{multline*}
{\mathcal P}[{\bf X}]={\mathcal P}[{\bf Y}]=
1-3s_1+3s_2+6s_{1,1}-s_3-8s_{2,1}-8s_{1,1,1}+3s_{3,1}+6s_{2,2}+11s_{2,1,1}\\
-3s_{3,2}-4s_{3,1,1}-3s_{2,2,1},
\end{multline*}
$$
{\mathcal H}[{\bf X}](q,t)={\mathcal H}[{\bf Y}](q,t),
$$
and
$$
{\mathcal G}[{\bf X}]={\mathcal G}[{\bf Y}]=72U_{(1,1,0,1,0,0)}+648U_{(1,1,1,0,0,0)}.
$$
The last equation can easily be computed by hand as follows.
There are $6!$ ways of labeling the points in diagram~(\ref{eq:diagram1}) by $p_1,p_2,p_3,p_4,p_5,p_6$.
If $p_1,p_2,p_3$ are not colinear, then the labeling gives the rank sequence $(1,1,0,1,0,0)$,
because $p_1$ spans a  subspace of dimension $1$ in $\C^3$, $p_1$ and $p_2$ span a subspace
of dimension $1+1$, $p_1,p_2,p_3$ span a subspace of dimension $1+1+0$, $p_1,p_2,p_3,p_4$
span a subspace of dimension $1+1+0+1$, etc.
There are $2\cdot 3!^2=72$ ways of choosing a labeling such that $p_1,p_2,p_3$ are colinear.
All other $720-72=648$ labelings, give the rank sequence $(1,1,1,0,0,0)$.
A similar reasoning can be used to compute ${\mathcal G}[{\bf Y}]$.
\end{example}
\begin{example}
Let ${\bf X}$ be the matroid corresponding to the hyperplane arrangement dual to the point arrangement
of the following diagram
$$\xymatrix{
*=0{\bullet\bullet}\ar@{-}[d] & & & \\
*=0{\bullet}\ar@{-}[d] & & & \\
*=0{\bullet}\ar@{-}[r] & *=0{\bullet}\ar@{-}[r] &*=0{\bullet}\ar@{-}[r] & *=0{\bullet}}.
$$
Let ${\bf Y}$ be the matroid corresponding to the hyperplane arrangement dual to the point arrangement
of the following diagram
$$
\xymatrix{
*=0{\bullet}\ar@{-}[ddd]\ar@{-}[rrdddd] &&&&&&\\
&&&&&&\\
&&&&&&\\
*=0{\bullet}\ar@{-}[ddd]\ar@{-}[rrd] &&&&&&\\
& & *=0{\bullet}\ar@{-}[rdd]\ar@{-}[rrrrdd] &&&&\\
&&&&&&\\
*=0{\bullet\bullet}\ar@{-}[rrr] & & & *=0{\bullet}\ar@{-}[rrr] & & & *=0{\bullet}}.
$$
The Tutte polynomial is the same for ${\bf X}$ and ${\bf Y}$.
The Billera-Jia-Reiner quasi-symmetric function {\em does} distinguish the arrangements.
We have
\begin{multline*}
{\mathcal P}[{\bf X}]=1-4s_1+6s_2+9s_{1,1}-4s_3-17s_{2,1}-10s_{1,1,1}+
s_4+12s_{3,1}+13s_{2,2}+17s_{2,1,1}\\
-3s_{4,1}-10s_{3,2}-10s_{3,1,1}-8s_{2,2,1}+
2s_{4,2}+2s_{4,1,1}+2s_{3,3}+3s_{3,2,1}+s_{2,2,2}.
\end{multline*}
and
\begin{multline*}
{\mathcal P}[{\bf Y}]=1-4s_1+6s_2+9s_{1,1}-4s_3-17s_{2,1}-10s_{1,1,1}+
s_4+12s_{3,1}+14s_{2,2}+17s_{2,1,1}\\
-3s_{4,1}-12s_{3,2}-10s_{3,1,1}-10s_{2,2,1}+
3s_{4,2}+2s_{4,1,1}+2s_{3,3}+4s_{3,2,1}+s_{2,2,2}.
\end{multline*}
We also have
\begin{multline*}
{\mathcal G}[{\bf X}]=3456 U_{(1,1,1,0,0,0,0)}+1080U_{(1,1,0,1,0,0,0)}+
264U_{(1,1,0,0,1,0,0)}+\\
+216U_{(1,0,1,1,0,0,0)}+24U_{(1,0,1,0,1,0,0)}.
\end{multline*}
and
\begin{multline*}
{\mathcal G}[{\bf Y}]=3456 U_{(1,1,1,0,0,0,0)}+1104U_{(1,1,0,1,0,0,0)}+
240 U_{(1,1,0,0,1,0,0)}+\\
+192U_{(1,0,1,1,0,0,0)}+48U_{(1,0,1,0,1,0,0)}.
\end{multline*}
So the invariants ${\mathcal H},{\mathcal P}$ and ${\mathcal G}$ distinguish these two matroids as well.
\end{example}

\section{Ideals and regularity}
\subsection{Equivariant free resolutions}
Let $K$ be a field, and $V$ be an $n$-dimensional $K$-vector space.
For any partition $\lambda$, $S_\lambda$ denotes its corresponding Schur functor.
In particular, $S_dV$ is the $d$-th symmetric power of $V$, and $S_{1^d}V=S_{1,\dots,1}V$ is the $d$-th exterior power.
Let $R=K[V]$ be the ring of polynomial functions on $V$. The space
$R_d$ of polynomial functions of degree $d$ can be identified with
$S_d(Z)$, where $Z=V^\star$ is the dual space of $V$. 
Also, the ring $R=\bigoplus_{d=0}^\infty R_d$ can be
identified with the symmetric algebra
$S(Z):=\bigoplus_{d=0}^\infty S_d(Z)$ on $Z=V^\star$. 
By choosing a basis in $V$ and a dual basis $\{x_1,\dots,x_n\}$ in
$V^\star$ we may identify $R$ with the polynomial ring
$K[x_1,\dots,x_n]$. Let ${\mathfrak m}=\bigoplus_{d=1}^\infty
R_d=(x_1,\dots,x_n)$ be the maximal homogeneous ideal of $R$.

Suppose that $M$ is a finitely generated graded $R$-module. Its
minimal resolution can be constructed as follows. First define
$D_0:=M$ and $E_0=D_0/{\mathfrak m}D_0$. Then $E_0$ is a finite
dimensional, graded vector space. The homogeneous quotient map
$\psi_0:D_0\to E_0$ has a homogeneous linear section $\phi_0:E_0\to D_0$
(which does not need to be an $R$-module homomorphism)
such that $\psi_0\circ\phi_0=\id$. We can extend $\phi_0$ to a
$R$-module homomorphism $\phi_0:R\otimes_K E_0\to D_0$ in a unique
way. The tensor product $R\otimes_K E_0$ has a natural grading as a
tensor product of two graded vector spaces, and $\phi_0$ is
homogeneous with respect to this grading. We inductively define
$D_i,E_i,\psi_i,\phi_i$ as follows. Define $D_i$ as the kernel of
$\phi_{i-1}:R\otimes E_{i-1}\to D_{i-1}$. We set
$E_i=D_{i}/{\mathfrak m}D_{i}$. Let $\phi_i:E_i\to D_i$ be a
homogeneous linear section to the homogeneous quotient map $\psi_i:D_i\to
E_i$. We can extend $\phi_i$ to an $R$-module homomorphism
$\phi_i:R\otimes E_i\to D_i$. By Hilbert's Syzygy theorem (see \cite{Hilbert}
and \cite[Corollary 19.7]{Eisenbud}, we get
that $D_i=0$ for $i>n$. We end up with the minimal free resolution
$$
0\to R\otimes E_n\to R\otimes E_{n-1}\to \cdots R \otimes E_0\to
M\to 0.
$$
Here $E_i$ can be naturally identified with $\Tor_j(M,K)$.

For a group $G$ and sets $X$ and $Y$ on which $G$ acts, we say
that a map $\phi:X\to Y$ is $G$-equivariant if it respects the action, i.e.,
$\phi(g\cdot x)=g\cdot \phi(x)$ for all $x\in X$ and $g\in G$.
Suppose that $G$ is a linearly reductive linear algebraic group and
$V$ is a representation of $G$. Assume that $G$ also acts on the
finitely generated graded $R$-module $M=\bigoplus_d M_d$ such the
multiplication $R\times M\to M$ is $G$-equivariant, and $M_d$ is a
representation of $G$ for every $d$. By the definition of linear
reductivity, we can choose the sections $\phi_i:E_i\to K_i$ to be
$G$-equivariant. So by induction we see that $G$ acts regularly on
$D_0,E_0,D_1,E_1,D_2,E_2,\dots$. Also, by induction one can show
that the structure of $D_i$ as a $G$-equivariant graded $R$-module,
and $E_i$ as graded representation of $G$ do not depend on the
choices of the $G$-equivariant sections $\phi_i$. We conclude
that
$E_i\cong \Tor_i(M,K)$ has a well-defined structure as a graded
$G$-module.

\subsection{Castelnuovo-Mumford regularity}
For a finite dimensional graded $K$-vector space $W=\bigoplus_{d\in
\Z} W_d$ we define 
$$\deg(W):=\max\{i\mid W_i\neq 0\}.
$$
 If $W=\{0\}$
then we define $\deg(W)=-\infty$. A finitely generated graded
$R$-module $M$ is called $s$-regular if $\deg(\Tor^i(M,K))\leq s+i$
for all $i$. The {\it Castelnuovo-Mumford regularity\/} $\reg(M)$ of
$M$ is the smallest integer $s$ such that $M$ is $s$-regular.
See \cite[\S20.5]{Eisenbud} for more on Castelnuovo-Mumford regularity.

\subsection{Product ideals and regularity bounds}
Suppose that $V_x$, $x\in X$ are subspaces of $V$ for some finite set $X$ with $d$ elements.
Assume that $X=\{1,2,\dots,d\}$.
Let $J_x\subseteq K[V]=S(Z)$ be the vanishing ideal of $V_x$.
The ideal $J_x$ is generated by the subspace $Z_x=V_x^{\perp}\subseteq Z=V^\star$ of all linear functions vanishing 
on $V_x$. For every subset $A\subseteq X$, we define $J_A:=\prod_{x\in X} J_x$, and let $J=J_X$.
A  crucial result we need is:
\begin{theorem}[Conca and Herzog,\cite{CH}]\label{theoCH}
The Castelnuovo-Mumford regularity of $J$ is equal to $d$.
\end{theorem}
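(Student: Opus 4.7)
The plan is to establish $\reg(J) = d$ by proving the two bounds separately. The lower bound is immediate: each $J_x$ is generated by the linear forms of $Z_x = V_x^\perp$, so the product $J = J_1 \cdots J_d$ is generated by the degree-$d$ products $\ell_1 \cdots \ell_d$ with $\ell_x \in Z_x$. Assuming each $V_x$ is a proper subspace of $V$, at least one such product is nonzero, so $J$ has minimal generators in degree $d$. Hence $\deg \Tor_0(J,K) \geq d$, forcing $\reg(J) \geq d$.

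For the upper bound I would argue by induction on $d$. The base case $d=1$ is the standard fact that an ideal generated by linear forms is, after a change of coordinates, generated by a subset of the variables, so its minimal resolution is a Koszul complex of regularity $1$. For the inductive step, set $I = J_1 \cdots J_{d-1}$, so that $J = I \cdot J_d$, and consider the short exact sequence
$$0 \to J \to I \to I/J \to 0.$$
The inductive hypothesis gives $\reg(I) \leq d-1$, and the standard inequality $\reg(J) \leq \max\bigl(\reg(I),\, \reg(I/J)+1\bigr)$ reduces the problem to showing $\reg(I/J) \leq d-1$.

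This last step is the main obstacle, and it is where a naive induction on $d$ fails to close. The quotient $I/J = I/(I\cdot J_d)$ is annihilated by $J_d$, so it is a module over $R/J_d \cong K[V_d]$, a polynomial ring in strictly fewer variables. The image of $I$ there is a product of the images of $J_1,\dots,J_{d-1}$, each still generated by linear forms, so morally one wants to apply the inductive hypothesis in the smaller polynomial ring $K[V_d]$. The difficulty is that $I/J$ is not literally a product of linear-form ideals but a quotient of one, and passing to the quotient can a priori spoil regularity control. Conca and Herzog handle this in \cite{CH} by strengthening the inductive statement to an assertion about products of linear-form ideals acting on \emph{componentwise linear} modules --- a class that contains our ideals and is closed under the quotient and colon operations that arise --- so that the induction closes. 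Combining this strengthened upper bound with the lower bound yields $\reg(J) = d$.
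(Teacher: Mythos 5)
The paper does not prove this statement at all: it is imported from Conca--Herzog \cite{CH} as an external black box, so there is no internal argument to measure yours against. On its own terms, your lower bound is correct and complete (granting the standing assumption that each $V_x$ is a proper subspace, so that some product $\ell_1\cdots\ell_d$ of linear forms $\ell_x\in Z_x$ is nonzero in the domain $R$; then $J$ has a minimal generator in degree $d$ and $\deg\Tor_0(J,K)\geq d$ forces $\reg(J)\geq d$). Your reduction of the upper bound via $0\to J\to I\to I/J\to 0$ and the inequality $\reg(J)\leq\max\{\reg(I),\reg(I/J)+1\}$ (Lemma~\ref{lemmaABC}) is also correct, and your diagnosis of where the naive induction breaks is exactly right: $I/J=I\otimes_R R/J_d$ is an extension of $\overline{I}=(I+J_d)/J_d$ --- a product of $d-1$ linear ideals in the smaller polynomial ring $K[V_d]$, which the inductive hypothesis does control --- by the submodule $(I\cap J_d)/(IJ_d)$, about which the inductive hypothesis says nothing.

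The gap is that the proposal then stops: the bound $\reg(I/J)\leq d-1$, which is the entire content of the theorem beyond the easy lower bound, is not established but delegated back to \cite{CH} with a one-sentence gloss about a strengthened induction over componentwise linear modules. That gloss is a plausible description of the flavour of the missing argument, but it is not an argument, and it should not be relied on as an accurate summary of how \cite{CH} actually closes the induction. So as a self-contained proof the attempt fails precisely at the step you yourself identify as ``the main obstacle''; as a justification for invoking the theorem it is no worse than what the paper does, namely cite \cite{CH} and move on.
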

We define
\begin{equation}\label{eqCk}
C_k=\bigoplus_{|A|=k}J_A.
\end{equation}
Following \cite[Chapter IV]{Sidman} we construct a complex
\begin{equation}\label{eq:Scomplex}
0\to C_{d}\to C_{d-1}\to \cdots \to C_0\to 0.
\end{equation}
The map $\partial_k:C_k\to C_{k-1}$ can
be written as $\partial_k=\sum_{A,B}\partial_{k}^{A,B}$, where
$$
\partial_k^{A,B}:J_A\to J_B
$$
Suppose that $A=\{i_1,i_2,\dots,i_k\}$ with $i_1<i_2<\cdots < i_k$,
then we define
$$
\partial_k^{A,B}:=
\left\{
\begin{array}{ll}
0 & \mbox{if $B\not\subseteq A$;}\\
(-1)^r\id & \mbox{if
$B=\{i_1,\dots,i_{r-1},i_{r+1},\dots,i_k\}$.}\end{array}\right.
$$
The homology of the complex is denoted by
$$
H_k=\ker \partial_k/\im \partial_{k+1}.
$$
\begin{remark}\label{rem8}
Since $\partial_d$ is injective, we have that $H_d=0$.
\end{remark}

\begin{proposition}[\cite{Sidman}]
If $V_X:=\bigcap_{x\in X}V_x=(0)$, then the homogeneous
maximal ideal ${\mathfrak m}$ kills all homology, i.e.,
${\mathfrak m}H_i=0$ for all $i$.
\end{proposition}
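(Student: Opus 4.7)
The plan is to reduce the statement to a homotopy calculation for multiplication by individual elements of $J_y$, $y \in X$. Since $\bigcap_{x} V_x = (0)$, dualization gives $\sum_{x} Z_x = Z$, and because $J_x$ is generated by $Z_x$, the linear forms in $\sum_x J_x$ already span $Z = R_1$; so $\mathfrak{m} = \sum_{y \in X} J_y$. It therefore suffices to show, for every $y \in X$ and every $z \in J_y$, that multiplication by $z$ is null-homotopic on the complex $C_\bullet$ of (\ref{eq:Scomplex}): then $z \cdot H_i = 0$ for all $i$, and summing over $y$ gives $\mathfrak{m} H_i = 0$.

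To build the null-homotopy, I would define $h_y : C_k \to C_{k+1}$ on the summand $J_A \subseteq C_k$ as follows. If $y \in A$, set $h_y = 0$ on this summand. If $y \notin A$, send $f \in J_A$ to $(-1)^s z f$ placed in the summand $J_{A \cup \{y\}} \subseteq C_{k+1}$, where $s$ is the position of $y$ in the sorted enumeration of $A \cup \{y\}$. The crucial point is that $z \in J_y$ and $f \in J_A$ with $y \notin A$ force $zf \in J_y \cdot J_A = J_{A \cup \{y\}}$, so the map does land in the correct summand. Conceptually, $h_y$ is the exterior multiplication-by-$y$ operator that is adjoint, in the Koszul sense, to the Sidman differential $\partial$.

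I would then verify $\partial h_y + h_y \partial = z \cdot \id$ by a case split on $f \in J_A$. If $y \in A$ and $y$ is the $r_0$-th smallest element of $A$, then $h_y(f) = 0$ and the only nonzero contribution to $h_y \partial(f)$ comes from the term in $\partial(f)$ that deletes $y$: the sign $(-1)^{r_0}$ from $\partial$ and the sign $(-1)^{r_0}$ from reinserting $y$ at position $r_0$ in $A$ multiply to $+1$, producing $zf$ in the $A$-component. If $y \notin A$, then $\partial h_y(f)$ has a diagonal contribution $(-1)^{2s} z f = zf$ in the $A$-component (from deleting $y$ from $A \cup \{y\}$), and off-diagonal contributions in the summands $J_{(A \cup \{y\}) \setminus \{j_r\}}$ with $r \neq s$. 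For each such $r$, $h_y \partial(f)$ contributes to the same summand, and the position of $y$ in $(A \cup \{y\}) \setminus \{j_r\}$ differs from $s$ by $-1$ when $r < s$ and by $0$ when $r > s$; combined with the index shift of the deleted element of $A$, this produces exactly opposite signs, so all off-diagonal terms cancel.

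The main obstacle is just the sign bookkeeping, but the cancellations are precisely those of the classical Koszul identity $d \iota + \iota d = \cdot z$ on an exterior algebra, so once the convention $(-1)^s$ is fixed the verification is forced. Having established the homotopy, we obtain $J_y \cdot H_i = 0$ for each $y$, and the identity $\mathfrak{m} = \sum_y J_y$ yields $\mathfrak{m} H_i = 0$ for all $i$, as claimed.
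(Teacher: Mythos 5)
Your proof is correct, and it is essentially the argument the paper is implicitly relying on: the paper gives no proof of this proposition, citing only Sidman's thesis, and the contracting homotopy $h_y$ with $\partial h_y + h_y\partial = z\cdot\id$ (using $zJ_A\subseteq J_{A\cup\{y\}}$ for $z\in J_y$, $y\notin A$, together with $\mathfrak{m}=\sum_y J_y$ from $\bigcap_x V_x=(0)$) is exactly the standard Koszul-type argument used there. The sign bookkeeping you describe checks out in both cases, so nothing further is needed.
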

The following result is Corollary 20.19 in \cite{Eisenbud}.
\begin{lemma}\label{lemmaABC}
If $A,B,C$ are finitely generated graded modules, and
$$
0\to A\to B\to C\to 0
$$
is exact, then
\begin{enumerate}
\item $\reg(A)\leq \max\{\reg(B),\reg(C)+1\}$;
\item $\reg(B)\leq \max\{\reg(A),\reg(C)\}$;
\item $\reg(C)\leq \max\{\reg(A)-1,\reg(B)\}$.
\end{enumerate}
\end{lemma}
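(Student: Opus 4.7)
The plan is to deduce all three inequalities from the long exact sequence of $\Tor$ applied to $K=R/\mathfrak{m}$, using the definition that $M$ is $s$-regular iff $\deg(\Tor^i(M,K)) \leq s+i$ for every $i \geq 0$. Tensoring the short exact sequence $0 \to A \to B \to C \to 0$ with $K$ and taking the derived functors yields a long exact sequence
\begin{equation*}
\cdots \to \Tor^{i+1}(C,K) \to \Tor^i(A,K) \to \Tor^i(B,K) \to \Tor^i(C,K) \to \Tor^{i-1}(A,K) \to \cdots
\end{equation*}
which is graded because all the maps involved are homogeneous. The key elementary fact I will use is that if $X \to Y \to Z$ is an exact sequence of graded vector spaces (so that $Y$ is squeezed between $X$ and $Z$), then $\deg(Y) \leq \max\{\deg(X), \deg(Z)\}$.

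Applying this fact to the appropriate three-term windows of the long exact sequence gives, for every $i$, the three degree inequalities
\begin{align*}
\deg(\Tor^i(A,K)) &\leq \max\{\deg(\Tor^{i+1}(C,K)), \deg(\Tor^i(B,K))\}, \\
\deg(\Tor^i(B,K)) &\leq \max\{\deg(\Tor^i(A,K)), \deg(\Tor^i(C,K))\}, \\
\deg(\Tor^i(C,K)) &\leq \max\{\deg(\Tor^i(B,K)), \deg(\Tor^{i-1}(A,K))\}.
\end{align*}
I would then substitute the regularity bounds $\deg(\Tor^i(M,K)) \leq \reg(M)+i$ into the right-hand sides. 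For the first inequality this yields $\deg(\Tor^i(A,K)) \leq \max\{\reg(C)+i+1,\reg(B)+i\} = i + \max\{\reg(B),\reg(C)+1\}$, establishing (1); the substitutions for (2) and (3) are analogous and produce the stated bounds directly.

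There is essentially no obstacle here beyond bookkeeping. The only mild subtlety is the index shift in (1) and (3): one must be careful that the $\Tor^{i+1}(C,K)$ term contributes $\reg(C)+i+1 = (\reg(C)+1)+i$ (hence the ``$+1$'' in statement (1)), while the $\Tor^{i-1}(A,K)$ term contributes $\reg(A)+i-1 = (\reg(A)-1)+i$ (hence the ``$-1$'' in statement (3)). One should also note that the argument makes sense even when some $\reg$ is $-\infty$ (i.e., the module is zero), using the convention that the maximum with $-\infty$ is the other argument, so no case analysis is required.
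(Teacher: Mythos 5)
Your proof is correct: the long exact sequence of $\Tor(-,K)$ together with the elementary degree bound for a three-term exact sequence of graded vector spaces gives exactly the three inequalities, with the index shifts accounting for the $+1$ and $-1$. The paper does not prove this lemma but simply cites it as Corollary 20.19 of Eisenbud, and your argument is precisely the standard proof of that result, so there is nothing to reconcile.
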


\begin{proposition}\label{prop9}
Suppose that $V_X=\bigcap_{x\in X} V_x=(0)$.
Then $H_k$ is concentrated at degree $k$ (and in particular, it is finite dimensional).
\end{proposition}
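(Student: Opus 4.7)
The plan is to combine the Castelnuovo--Mumford regularity bound from Theorem~\ref{theoCH} with the two hyperhomology spectral sequences of $C_\bullet$. The easy half, $(H_k)_j=0$ for $j<k$, follows because each product ideal $J_A$ is generated in degree $|A|$ (as a product of $|A|$ ideals of linear forms), so $(C_k)_j=0$ for $j<k$ and therefore $(H_k)_j=0$ in that range. Combined with $\mathfrak{m}H_k=0$ (the preceding proposition) and finite generation as a subquotient of $C_k$, we obtain the structural decomposition $H_k\cong\bigoplus_{s\ge k}K(-s)^{h_{k,s}}$ as a graded $R$-module. It remains to show $h_{k,s}=0$ for every $s>k$.

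Let $\mathbb{T}_n=\Tor_n^R(C_\bullet,K)$ denote the hyper-Tor of the bounded complex $C_\bullet$. Two convergent spectral sequences abut to $\mathbb{T}_{p+q}$, with $E_2$-pages
$$
E_2^{p,q}(\mathrm{I})=H_p\bigl(\Tor_q(C_\bullet,K)\bigr),\qquad E_2^{p,q}(\mathrm{II})=\Tor_q(H_p,K).
$$
By Theorem~\ref{theoCH} applied to each $J_A$, $\reg(C_p)=p$, so $\Tor_q(C_p,K)$ lives in graded degrees $\le p+q$; hence $E_2(\mathrm{I})$ lies in degrees $\le p+q$ and $\mathbb{T}_n$ is concentrated in degrees $\le n$. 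On the other side, $\mathfrak{m}H_p=0$ implies that $\Tor_q(H_p,K)$ has a graded piece of dimension ${n\choose q}h_{p,s}$ in degree $q+s$ for each $s$ and is zero in all other degrees.

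Now suppose, for contradiction, that $h_{p,s}\ne 0$ for some $s>p$, and pick $(p_0,s_0)$ maximizing the excess $e^{*}=s_0-p_0>0$ among such pairs. Then $E_2^{p_0,0}(\mathrm{II})=H_{p_0}$ has a nonzero graded piece in degree $s_0$; since $s_0>p_0$, the degree bound on $\mathbb{T}_{p_0}$ forces this class to die on the way to $E_\infty$. No incoming differential is possible, since its source $E_r^{p_0+r,1-r}$ has negative $q$-coordinate for $r\ge 2$. An outgoing differential $d_r\colon E_r^{p_0,0}\to E_r^{p_0-r,r-1}$ lands, in degree $s_0$, in a space of dimension at most ${n\choose r-1}h_{p_0-r,s_0-r+1}$; but the target excess is $(s_0-r+1)-(p_0-r)=e^{*}+1$, and maximality of $e^{*}$ forces $h_{p_0-r,s_0-r+1}=0$. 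Hence every outgoing differential vanishes on the class, which survives to $E_\infty^{p_0,0}$ in degree $s_0>p_0$, contradicting the bound from $(\mathrm{I})$. The main obstacle is keeping the graded degrees straight across both spectral sequences; the Conca--Herzog bound controls $(\mathrm{I})$, the $\mathfrak{m}$-annihilation of $H_p$ pins down the shape of $(\mathrm{II})$, and the excess argument then closes the proof.
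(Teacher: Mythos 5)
Your proof is correct, but it takes a genuinely different route from the paper. Both arguments rest on the same two inputs: the Conca--Herzog bound $\reg(C_p)=p$ and the fact that ${\mathfrak m}H_p=0$, which forces $H_p\cong\bigoplus_s K(-s)^{h_{p,s}}$ and makes $\Tor_q(H_p,K)$ computable from the Koszul complex. The paper, however, never invokes hyperhomology: it breaks the complex into the short exact sequences $0\to B_i\to Z_i\to H_i\to 0$ and $0\to Z_{i+1}\to C_{i+1}\to B_i\to 0$, proves the key inequality $\reg(H_i)\le\reg(B_i)-1$ by exploiting $\Tor_n(Z_i,K)=0$ (a syzygy module has projective dimension $\le n-1$) in the long exact $\Tor$ sequence, and then runs a descending induction from the top of the complex using the standard regularity estimates for short exact sequences (Lemma~\ref{lemmaABC}), starting from $\reg(B_{d-1})=\reg(C_d)=d$. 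You instead compare the two hyperhomology spectral sequences of $C_\bullet\otimes^{\mathbf L}K$: sequence~(I) plus Conca--Herzog bounds the internal degrees of the abutment by the homological degree, sequence~(II) plus ${\mathfrak m}H_p=0$ pins down exactly where $\Tor_q(H_p,K)$ can live, and your extremal ``maximal excess'' choice kills all outgoing differentials (incoming ones vanish for positional reasons), so an offending class would survive to $E_\infty$ and violate the bound from~(I). Your extremal argument plays the role of the paper's descending induction, and your treatment is arguably more symmetric in $k$ (you never need Remark~\ref{rem8} or the special starting case $i=1$); the paper's proof buys elementarity, using nothing beyond short exact sequences and Corollary~20.19 of Eisenbud. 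You are also more explicit than the paper about the lower bound --- that $H_k$ vanishes in degrees $<k$ because $C_k$ does --- which the paper leaves implicit; this half is genuinely needed for ``concentrated at degree $k$,'' so including it is a point in your favor.
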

\begin{proof}
We have $\reg(C_i)\leq i$ by Theorem~\ref{theoCH}.
Let $Z_i$ and $B_i$ be the kernel, respectively, the cokernel of $\partial_i$.

First, we prove that
\begin{equation}\label{eqHB}
\reg(H_i)\leq \reg(B_i)-1
\end{equation}
for $i=0,1,\dots,d-1$. 
Since ${\mathfrak m}H_i=0$, $H_i$ is just equal to a number of copies of $K$ in
various degrees. From the Koszul resolution follows that
$$
\deg(\Tor_j(H_i,K))=\deg(H_i)+j
$$
for $j=0,1,2,\dots,n$, hence $\reg(H_i)=\deg(H_i)$.
The exact sequence
\begin{equation}\label{eqBZH}
0\to B_i\to Z_i\to H_i\to 0
\end{equation}
gives rise to a long exact $\Tor$ sequence 
$$
0\to \Tor_n(B_i,K)\to\Tor_n(Z_i,K)\to \Tor_n(H_i,K)\to \Tor_{n-1}(B_i,K)\to \cdots
$$
Since $Z_i$ is a submodule of a free module, its projective dimension is $\leq n-1$ and $\Tor_n(Z_i,K)=0$.
Therefore 
$$\deg(\Tor_{n-1}(B_i,K))\geq \deg(\Tor_n(H_i,K))=\reg(H_i)+n.$$
It follows that
$$
\reg(B_i)+n-1\geq\deg(\Tor_{n-1}(B_i,K))\geq \reg(H_i)+n.
$$
This proves (\ref{eqHB}).

From (\ref{eqBZH}) and Lemma~\ref{lemmaABC} follows that
\begin{equation}\label{eqZB}
\reg(Z_i)\leq \max\{\reg(B_i),\reg(H_i)\}=\reg(B_i)
\end{equation}

By induction on $i$ we will show that $\reg(B_{d-i})\leq d-i+1$, $\reg(Z_{d-i})\leq d-i+1$ and $\reg(H_{d-i})\leq {d-i}$.
For $i=1$ we have $\reg(B_{d-1})=\reg(C_{d})=d$, $\reg(Z_{d-1})\leq d$ by (\ref{eqZB}) and $\reg(H_{d-1})\leq d-1$ by (\ref{eqHB}).

Suppose that $i>1$. We may assume by induction that $Z_{d-i+1}$ is $(d-i+2)$-regular.
From the exact sequence
$$
0\to Z_{d-i+1}\to C_{d-i+1}\to B_{d-i}\to 0
$$
follows that 
$$
\reg(B_{d-i})\leq \max\{\reg(Z_{d-i+1})-1,\reg(C_{d-i+1})\}\leq d-i+1
$$
by Lemma~\ref{lemmaABC}.
Now we have $\reg(Z_{d-i})\leq d-i+1$ by (\ref{eqZB}) and $\reg(H_{d-i})\leq d-i$ by (\ref{eqHB}).

\end{proof}

Suppose that $G$ is a linearly reductive group and let $\widehat{G}$ denote
the set of isomorphism classes of irreducible representations of $G$.
Let $\GHilb$ be the set of maps $\widehat{G}\to \Z$.
Elements of $\GHilb$ may be thought of as $G$-Hilbert series.
If $M$ is a $G$-module such that every irreducible representation
appears only finitely many times, then we define
$$\langle M\rangle=\langle M\rangle_G\in \GHilb.$$ 
For every irreducible represention $U$ of $G$, $\langle M\rangle(U)$
is the multiplicity of $U$ in $M$.
\begin{lemma}\label{lemEulerChar}
Suppose that $G$ acts on $Z$ such that every irreducible representation of $G$ appears only finitely many times in $S(Z)$.
Then we have
\begin{equation}\label{eqAlt}
\sum_{A\subset X}(-1)^{|A|}\langle J_A\rangle=
\sum_{i=0}^d (-1)^i \langle C_i\rangle=\sum_{i=0}^{d-1}(-1)^i\langle H_i\rangle
\end{equation}
\end{lemma}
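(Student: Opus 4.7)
The first equality is immediate from the definition $C_i = \bigoplus_{|A|=i} J_A$ in (\ref{eqCk}): taking $G$-multiplicities is additive on direct sums, so
$$\sum_{i=0}^d(-1)^i\langle C_i\rangle=\sum_{i=0}^d (-1)^i\sum_{|A|=i}\langle J_A\rangle=\sum_{A\subseteq X}(-1)^{|A|}\langle J_A\rangle.$$

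For the second equality, my plan is to run the standard Euler--Poincar\'e argument one irreducible representation at a time. Fix $U\in\widehat{G}$. Because $G$ is linearly reductive, the multiplicity functor $M\mapsto \Hom_G(U,M)$ is exact on $G$-modules. Applying it to the complex (\ref{eq:Scomplex}) yields a complex of $K$-vector spaces
$$0\to \Hom_G(U,C_d)\to \Hom_G(U,C_{d-1})\to\cdots\to \Hom_G(U,C_0)\to 0$$
whose $i$-th homology is $\Hom_G(U,H_i)$. By the standing hypothesis that every irreducible appears only finitely many times in $S(Z)$, each $\Hom_G(U,J_A)$ is finite-dimensional (since $J_A$ embeds in $S(Z)$), hence so is each $\Hom_G(U,C_i)$. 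The Euler--Poincar\'e formula for a bounded complex of finite-dimensional vector spaces therefore gives
$$\sum_{i=0}^d(-1)^i\dim_K\Hom_G(U,C_i)=\sum_{i=0}^d(-1)^i\dim_K\Hom_G(U,H_i),$$
which is exactly the equality of the values at $U$ of $\sum_i(-1)^i\langle C_i\rangle$ and $\sum_i(-1)^i\langle H_i\rangle$. Since $U$ was arbitrary, this equality holds in $\GHilb$.

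Finally, Remark~\ref{rem8} says $H_d=0$, so the sum on the homology side only runs from $0$ to $d-1$, yielding (\ref{eqAlt}). There is no real obstacle here: the only things to check are the exactness of the multiplicity functor (immediate from linear reductivity) and the finiteness of each multiplicity space (immediate from the hypothesis on $S(Z)$ and the inclusion $J_A\subseteq S(Z)$). Everything else is bookkeeping.
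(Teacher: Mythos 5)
Your proof is correct and follows essentially the same route as the paper: both are the standard Euler--Poincar\'e argument, relying on linear reductivity to make multiplicities additive. The paper organizes the bookkeeping via the short exact sequences $0\to Z_i\to C_i\to B_{i-1}\to 0$ and $0\to B_i\to Z_i\to H_i\to 0$ and sums $\langle\cdot\rangle$ with alternating signs, whereas you fix an irreducible $U$ and apply the exact functor $\Hom_G(U,-)$ before counting dimensions; these are the same computation in different clothing.
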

\begin{proof}
The first equality follows from the definition \ref{eqCk}. 
For every $i$ we have exact sequences
$$
0\to Z_i\to C_i\to \im B_{i-1}\to 0
$$
and
$$
0\to B_i\to Z_i \to H_i\to 0.
$$
So we have
\begin{multline}
 \sum_{i} (-1)^i \langle C_i\rangle =
 \sum_{i} (-1)^i\langle Z_i\rangle+\sum_i(-1)^i\langle B_{i-1} rangle=\\
=
 \sum_i (-1)^i\langle Z_i\rangle-\sum_{i}(-1)^i\langle B_i\rangle=
 \sum_i (-1)^i\langle H_i\rangle.
\end{multline}
 \end{proof}

\section{Realizable polymatroids}
\subsection{The tensor trick}
Let us fix a field $K$.
\begin{definition}
A {\em arrangement realization} of a polymatroid ${\bf X}=(X,\rk)$ over $K$ is a finite dimensional $K$-vector space $V$ together 
with a collection of subspaces $V_x$, $x\in X$
such that
$$
\rk(A)=\dim V-\dim V_A
$$
for every $A\subseteq X$, where 
$$V_A=\bigcap_{x\in X}V_x.$$
\end{definition}
Let ${\bf X}=(X,\rk)$ be a polymatroid and set $d=|X|$.
From now on, assume that $K$ is a field of characteristic 0.
Suppose that $V$ is an $n$-dimensional $K$-vector space and
 $V_x$, $x\in X$ is a collection of subspaces that form a realization of ${\bf X}$.
 
Let $W$ be another $K$-vector space and let $R(W):=K[V\otimes W^\star]$ be the ring of polynomial functions on $V\otimes W^\star=\Hom(W,V)$.
Note that $\GL(W)$ acts regularly on $K[V\otimes W^\star]$.
Let $J_x(W)\subseteq R(W)$ be the vanishing ideal of $V_x\otimes W^\star\subseteq V\otimes W^\star$.
For a subset $A\subseteq X$ we define
$$
J_A(W)=\prod_{x\in A}J_x(W)
$$
and we set $J(W):=J_X(W)$.
Define
$$
C_i(W):=\bigoplus_{A\subseteq X\atop |A|=i}J_{A}(W).
$$
As in (\ref{eq:Scomplex}), we have a complex
\begin{equation}\label{eq:ScomplexW}
0\to C_d(W)\to C_{d-1}(W)\to \cdots \to C_1(W)\to C_0(W)\to 0.
\end{equation}
Let $H_i(W)$ be the $i$-th homology group. 
By Lemma~\ref{lemEulerChar}, we have
\begin{equation}\label{eq10.2}
\sum_{i=0}^{d-1}(-1)^i\langle H_i(W)\rangle=\sum_{i=0}^d(-1)^i\langle C_i(W)\rangle=
\sum_{A\subseteq X}(-1)^{|A|}\langle J_A(W)\rangle
\end{equation}
If $f=\sum_{\lambda}a_{\lambda}s_{\lambda}\in \Z[[e_1,e_2,\dots]]$, then we define
$$
f\star W=\sum a_{\lambda}\langle S_{\lambda}(W)\rangle.
$$
For example, we have
$$
\sigma\star W=(s_0+s_1+s_2+s_3+\cdots)\star W=\sum_{i=0}^\infty \langle S_i(W)\rangle=\langle S(W)\rangle.
$$
If $f,g\in \Z[[e_1,e_2,\dots]]$, then
$$
(f\cdot g)\star W=(f\star W)\otimes (g\star W).
$$
\subsection{Product ideals and the invariants ${\mathcal P}[{\bf X}]$, ${\mathcal H}[{\bf X}](q,t)$}
\begin{theorem}\label{theo8}
We have
\begin{equation}\label{eq10}
\big(\sigma^{n-\rk(X)}{\mathcal P}[{\bf X}]\big)\star W=\sum_{A\subseteq X}(-1)^{|A|}\langle J_A(W)\rangle
\end{equation}
and
\begin{equation}\label{eq10.1}
\big(\sigma^n{\mathcal H}[X](\sigma^{-1},-1)\big)\star W=\langle J(W)\rangle.
\end{equation}

\end{theorem}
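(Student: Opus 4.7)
The plan is to prove (\ref{eq10}) by induction on $d=|X|$; the identity (\ref{eq10.1}) then follows by applying (\ref{eq10}) to each ${\bf X}|_B$, taking $\sum_B(-1)^{|B|}$, and collapsing the resulting double sum via the standard identity $\sum_{B:A\subseteq B\subseteq X}(-1)^{|B|}=(-1)^{|X|}[A=X]$, which leaves only the $A=X$ term $\langle J(W)\rangle$.

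I would first reduce to the case $V_X=(0)$ (so $n=\rk(X)$). A splitting $V=V_X\oplus V'$ factors $R(W)=K[V_X\otimes W^\star]\otimes K[V'\otimes W^\star]$, and since each $V_x\supseteq V_X$, every $J_A(W)$ factors as $\overline{J}_A(W)\otimes K[V_X\otimes W^\star]$, where $\overline{J}_A(W)$ is the corresponding ideal for the quotient arrangement in $V/V_X$. Both sides of (\ref{eq10}) then pick up the common factor $\langle K[V_X\otimes W^\star]\rangle=\sigma^{n-\rk(X)}\star W$, reducing the problem to $V_X=(0)$.

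Assuming $V_X=(0)$, Lemma~\ref{lemEulerChar} gives $\sum_A(-1)^{|A|}\langle J_A(W)\rangle=\sum_i(-1)^i\langle H_i(W)\rangle$. Each $J_x(W)$ is generated in degree one, so Theorem~\ref{theoCH} still applies and Proposition~\ref{prop9} transfers verbatim to the subspaces $V_x\otimes W^\star$ in $V\otimes W^\star$ (whose intersection is trivial). Hence each $H_i(W)$ is finite-dimensional and concentrated in $R(W)$-degree $i$, with $H_d(W)=0$. Via the Cauchy decomposition $R(W)_i=\bigoplus_{|\lambda|=i}S_\lambda(V^\star)\otimes S_\lambda(W)$, each $\langle H_i(W)\rangle$ is an integer combination of $\langle S_\lambda(W)\rangle$ with $|\lambda|=i$, so the alternating sum equals $P\star W$ for a unique $P\in\Sym$ of degree less than $d$ (uniqueness by taking $\dim W$ large enough that the $S_\lambda(W)$ with $|\lambda|<d$ are independent).

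To identify $P$ with ${\mathcal P}[{\bf X}]$, I would substitute the inductive hypothesis $(\sigma^{n-\rk(A)}{\mathcal P}[{\bf X}|_A])\star W=\sum_{B\subseteq A}(-1)^{|B|}\langle J_B(W)\rangle$ for each $A\subsetneq X$ into the expansion $\sigma^n{\mathcal H}[{\bf X}](\sigma^{-1},-1)=\sum_A(-1)^{|A|}\sigma^{n-\rk(A)}{\mathcal P}[{\bf X}|_A]$, apply $\star W$, and interchange summation order to obtain the clean identity $(\sigma^n{\mathcal H}[{\bf X}](\sigma^{-1},-1))\star W-\langle J(W)\rangle=(-1)^d({\mathcal P}[{\bf X}]-P)\star W$. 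The left-hand side involves only $\langle S_\lambda(W)\rangle$ with $|\lambda|\geq d$, since $\sigma^n{\mathcal H}[{\bf X}](\sigma^{-1},-1)$ vanishes in degrees $<d$ by Remark~\ref{rem5} and $J(W)$ lives in $R(W)$-degrees $\geq d$; the right-hand side involves only $|\lambda|<d$ because $P,{\mathcal P}[{\bf X}]$ both have degree $<d$. This degree separation forces both sides to vanish, giving $P={\mathcal P}[{\bf X}]$. The main obstacle I anticipate is the grading bookkeeping needed to translate Proposition~\ref{prop9}'s polynomial-degree concentration of $H_i(W)$ into Schur-functor concentration by partition size via Cauchy, and to verify cleanly that each term in the final identity falls on the correct side of the cutoff $|\lambda|=d$.
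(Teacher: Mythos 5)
Your proposal is correct and follows essentially the same route as the paper: induction on $|X|$, M\"obius inversion of $Z_A=\sum_{B\subseteq A}(-1)^{|B|}\langle J_B(W)\rangle$, concentration of the homology $H_i(W)$ in degree $i$ (via Conca--Herzog and Proposition~\ref{prop9}) to control the $A=X$ term, and the degree-separation argument across the cutoff $|\lambda|=d$. The only cosmetic differences are that you perform the reduction to $V_X=(0)$ up front rather than as a second case at the end, and you name the degree-$<d$ function $P$ explicitly before identifying it with ${\mathcal P}[{\bf X}]$.
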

\begin{proof}
We prove the statement by induction on $d=|X|$. If $X=\emptyset$, then
${\mathcal P}[{\bf X}]=1$ and
$$
\sigma^n\star W=\langle S(W)^{\otimes n}\rangle=\langle S(W\otimes V^\star)\rangle=\langle K[V\otimes W^\star]\rangle=\langle R(W)\rangle=\langle J_{\emptyset}(W)\rangle,
$$
so (\ref{eq10}) holds.

For every $A\subseteq X$, define
$$
Z_A:=\sum_{B\subseteq A}(-1)^{|B|}\langle J_B(W)\rangle.
$$
By M\"obius inversion, we get
$$
\langle J_B(W)\rangle=\sum_{A\subseteq B}(-1)^{|A|}Z_A.
$$
By induction we may assume that
$$
\big(\sigma^{n-\rk(A)}{\mathcal P}[{\bf X}\mid_A]\big)\star W=Z_A
$$
for all proper subsets $A\subset X$.

Let us assume that $V_X=(0)$. From (\ref{eq10.2})  and Proposition~\ref{prop9} follows that
$Z_X$ is a combination of $\langle S_{\lambda}(W)\rangle$
with $|\lambda|<d$.
Consider
\begin{multline}\label{eq11}
\big(\sigma^n{\mathcal H}[X](\sigma^{-1},-1)\big)\star W-\langle J(W)\rangle=\\
=
\sum_{A\subseteq X}(-1)^{|A|} \big(\sigma^{n-\rk(A)}{\mathcal P}[{\bf X}\mid_A]\big)\star W-\langle J(W)\rangle=\\
=(-1)^{|X|}\big(\sigma^{n-\rk(X)}{\mathcal P}[{\bf X}]\star W-Z_X\big)
+\sum_{A\subseteq X}(-1)^{|A|} Z_A-\langle J(W)\rangle=\\
=(-1)^{|X|}\big(\sigma^{n-\rk(X)}{\mathcal P}[{\bf X}]\star W-Z_X\big).
\end{multline}
In $\big(\sigma^n{\mathcal H}[X](\sigma^{-1},-1)\big)\star W$ and $\langle J(W)\rangle$
only terms $\langle S_{\lambda}(W)\rangle$ appear with $|\lambda|\geq d$.
On the other hand, in $\sigma^{n-\rk(X)}{\mathcal P}[{\bf X}]\star W$ and $Z_X$ only terms $\langle S_{\lambda}(W)\rangle$
appear with $|\lambda|<d$. It follows that the left-hand side and the right-hand side of (\ref{eq11})
are equal to 0.

Suppose that $V_X\neq (0)$. Let $V'$ be a complement of $V_X$ in $V$ of dimension $n-r(X)$.
Define $V'_x=V'\cap V_x$ for all $x\in X$ and $V'_A=V'\cap V_A=\bigcap_{x\in A} V'_x$ for all $A\subseteq X$. 
We have that $V'_X=V'\cap V_X=(0)$ and $V'_A=V'_A\oplus V_X$ for all $A\subseteq X$.
It follows that
$$
\rk(A)=\dim V-\dim V_A=(\dim V'+\dim V_X)-(\dim V'_A+\dim V_X)=\dim V'-\dim V'_A.
$$
Let $J_x'(W)\subseteq K[V'\otimes W^\star]$ be the vanishing ideal of $V_x'\otimes W^\star$ inside $V'\otimes W^\star$. 
Define $J'_A(W)=\prod_{x\in A}J_x'(W)$ and set $J'(W)=J'_X(W)$. 
By the previous case,
$$
\big(\sigma^{\rk(X)}{\mathcal H}[X](\sigma^{-1},-1)\big)\star W=\langle J'(W)\rangle  
$$
and
$$
{\mathcal P}[{\bf X}]\star W=\sum_{A\subseteq X}(-1)^{|A|}\langle J'_A(W)\rangle.
$$
It follows that
$$
J(W)=J'(W)\otimes S(V_X^\star\otimes W)=J'(W)\otimes S(W)^{\otimes (n-\rk(X))}
$$
and
\begin{multline}\label{eq20}
\big(\sigma^{n}{\mathcal H}[X](\sigma^{-1},-1)\big)\star W=
\big(\sigma^{\rk(X)}{\mathcal H}[X](\sigma^{-1},-1)\big)\star W\otimes \langle S(W)^{\otimes (n-\rk(X))}\rangle
=\\
=\langle J'(W)\otimes S(W)^{\otimes (n-\rk(X))}\rangle=\langle J(W)\rangle.
\end{multline}
Similarly, from 
$${\mathcal P}[{\bf X}]\star W=\sum_{A\subseteq X}(-1)^{|A|}\langle J_A'(W)\rangle$$
follows
$$(\sigma^{n-\rk(X)}{\mathcal P}[{\bf X}])\star W=
\sum_{A\subseteq X}(-1)^{|A|}\langle J_A'(W)\otimes S(W)^{\otimes (n-\rk(X))}\rangle=
\sum_{A\subseteq X}(-1)^{|A|}\langle J_A(W) \rangle. 
$$

\end{proof}
\begin{corollary}\label{cor11}
Suppose that $V_X=(0)$.
If we write 
$$
{\mathcal P}[{\bf X}]=u_0-u_1+u_2-\cdots+(-1)^{d-1}u_{d-1}
$$
where $u_i$ is a homogeneous symmetric polynomial of degree $i$ for all $i$,
then
$$
u_i\star W=\langle H_i(W)\rangle.
$$
\end{corollary}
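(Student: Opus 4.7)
The plan is to combine Theorem~\ref{theo8} with Lemma~\ref{lemEulerChar} and then use Proposition~\ref{prop9} (applied to the tensored arrangement) to pin down the decomposition by degree. First I would observe that the hypothesis $V_X=(0)$ forces $\rk(X)=n$, so the factor $\sigma^{n-\rk(X)}$ in (\ref{eq10}) collapses to $1$. Therefore Theorem~\ref{theo8} specializes to
\begin{equation*}
{\mathcal P}[{\bf X}]\star W=\sum_{A\subseteq X}(-1)^{|A|}\langle J_A(W)\rangle.
\end{equation*}

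Next I would rewrite the right-hand side via the Euler-characteristic identity. The subspaces $V_x\otimes W^\star$ in $V\otimes W^\star$ satisfy $\bigcap_{x\in X}(V_x\otimes W^\star)=V_X\otimes W^\star=(0)$, so the whole apparatus of Section~4.3 applies to the tensored arrangement. Hence Lemma~\ref{lemEulerChar} yields
\begin{equation*}
\sum_{A\subseteq X}(-1)^{|A|}\langle J_A(W)\rangle=\sum_{i=0}^{d-1}(-1)^i\langle H_i(W)\rangle,
\end{equation*}
and Proposition~\ref{prop9}, also applied to the tensored arrangement, tells us that each $H_i(W)$ is concentrated in degree $i$ of $R(W)$.

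The final step is matching degrees. Under the Cauchy decomposition, the degree-$i$ part of $R(W)=K[V\otimes W^\star]$ is a sum of irreducibles $S_\lambda(W)$ with $|\lambda|=i$, so the concentration from Proposition~\ref{prop9} means that $\langle H_i(W)\rangle$ involves only $S_\lambda(W)$ with $|\lambda|=i$. On the other side, writing the given expansion ${\mathcal P}[{\bf X}]=\sum_{i=0}^{d-1}(-1)^i u_i$ with $u_i$ homogeneous of degree $i$, the definition of $\star$ shows that $u_i\star W$ involves only $S_\lambda(W)$ with $|\lambda|=i$. Comparing the two sides componentwise in $\lambda$ then forces $(-1)^i u_i\star W=(-1)^i\langle H_i(W)\rangle$ for each $i$, giving $u_i\star W=\langle H_i(W)\rangle$.

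The only subtle point, and therefore the main thing to verify carefully, is the degree-matching: one must be sure that the grading on $R(W)$ under which $H_i(W)$ is declared to be concentrated in degree $i$ corresponds under the $\GL(W)$-isotypic decomposition exactly to the grading on $\Sym$ that makes $s_\lambda$ have degree $|\lambda|$. Once this compatibility is spelled out (via the standard identification $R(W)_i=S^i(V^\star\otimes W)=\bigoplus_{|\lambda|=i}S_\lambda(V^\star)\otimes S_\lambda(W)$), the corollary drops out of Theorem~\ref{theo8} essentially by bookkeeping.
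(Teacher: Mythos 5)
Your proposal is correct and is exactly the argument the paper intends: specialize Theorem~\ref{theo8} using $\rk(X)=n$ (from $V_X=(0)$), rewrite the alternating sum of the $\langle J_A(W)\rangle$ via Lemma~\ref{lemEulerChar} as $\sum_i(-1)^i\langle H_i(W)\rangle$, and separate by degree using Proposition~\ref{prop9} together with the Cauchy decomposition $R(W)_i=\bigoplus_{|\lambda|=i}S_\lambda(Z)\otimes S_\lambda(W)$. The degree-compatibility point you flag is indeed the only thing to check, and your identification resolves it.
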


\begin{proposition}\label{prop12}
We can write 
$$
{\mathcal H}[{\bf X}](\sigma^{-1},-1)=w_d-w_{d+1}+w_{d+2}-w_{d+3}+\cdots
$$
where $d=|X|$ and $w_i$ is a homogeneous symmetric polynomial of degree $i$.
We have
$$
w_{d+i}\star W=\langle \Tor_i(J(W),K)\rangle.
$$
\end{proposition}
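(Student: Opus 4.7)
The first assertion---the representation of $\mathcal{H}[{\bf X}](\sigma^{-1},-1)$ in the stated alternating form---is immediate from Remark~\ref{rem5}, which says this series vanishes in all degrees below $d$.

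For the identity $w_{d+i}\star W = \langle \Tor_i(J(W),K)\rangle$, my plan is to compare two expressions for $\langle J(W)\rangle$. On one side, the $\GL(W)$-equivariant minimal free resolution of $J(W)$ as a graded $R(W)$-module (finite by Hilbert's syzygy theorem) yields the Euler-characteristic identity
\begin{equation*}
\langle J(W)\rangle \;=\; \langle R(W)\rangle \cdot \sum_{i\geq 0}(-1)^{i}\,\langle \Tor_i(J(W),K)\rangle.
\end{equation*}
On the other side, Theorem~\ref{theo8} factors $\langle J(W)\rangle = (\sigma^{n}\star W)\cdot\bigl(\mathcal{H}[{\bf X}](\sigma^{-1},-1)\star W\bigr)$, and $\sigma^{n}\star W = \langle S(W)^{\otimes n}\rangle$ coincides with $\langle R(W)\rangle$.

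The key technical step is to verify that this resolution is \emph{linear}, i.e.\ that $\Tor_i(J(W),K)$ is concentrated in degree $d+i$. Two inputs are combined here: (i) $J(W)$ is generated in degree $d$, since each $J_x(W)$ is the vanishing ideal of a linear subspace of $V\otimes W^{\star}$ and is therefore generated in degree~$1$; and (ii) Theorem~\ref{theoCH} (Conca--Herzog), applied to the subspaces $\{V_x\otimes W^{\star}\}$ inside $V\otimes W^{\star}$, gives $\reg(J(W))=d$. The standard observation that every differential in a minimal free resolution raises internal degree by at least $1$, combined with (i), forces $\Tor_i(J(W),K)$ to live in degrees $\geq d+i$, while (ii) supplies the matching upper bound $\leq d+i$.

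With linearity in hand, I cancel $\sigma^{n}\star W$---which has constant term~$1$ in the graded representation ring and hence is a non-zero-divisor on graded series---from the two expressions for $\langle J(W)\rangle$ to obtain
\begin{equation*}
\sum_{i\geq 0}(-1)^{i}\,\langle \Tor_i(J(W),K)\rangle \;=\; \mathcal{H}[{\bf X}](\sigma^{-1},-1)\star W \;=\; \sum_{i\geq 0}(-1)^{i}\,w_{d+i}\star W.
\end{equation*}
Matching the homogeneous component of degree $d+i$ on both sides---legitimate precisely because of the linearity established above and the homogeneity of $w_{d+i}$---yields the desired identity. The main obstacle in this plan is the linearity of the resolution; once this is secured, the remaining cancellation and degree-matching are routine.
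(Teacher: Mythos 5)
Your proposal is correct and follows essentially the same route as the paper: the paper likewise invokes the Conca--Herzog regularity bound together with generation in degree $d$ to get a linear $\GL(W)$-equivariant minimal free resolution, equates the resulting Euler characteristic with the expression $\big(\sigma^{n}\mathcal{H}[{\bf X}](\sigma^{-1},-1)\big)\star W=\langle J(W)\rangle$ from Theorem~\ref{theo8}, cancels $\sigma^{n}\star W=\langle R(W)\rangle$, and matches graded components. You merely spell out more explicitly the two points the paper leaves implicit (why the resolution is linear and why the cancellation and degree-matching are legitimate), which is fine.
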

\begin{proof}
Since $J(W)$ is $d$-regular and generated in degree $d$, it has a linear minimal free resolution. 
We can choose this resolution to be $\GL(W)$-equivariant.
Define
$$
E_i(W):=\Tor_i(J(W),K).
$$
The minimal resolution has the form
$$
0\to E_\ell(W)\otimes R(W)\to  \cdots \to E_1(W)\otimes R(W)\to E_0(W)\otimes R(W)\to J(W)\to 0.
$$
where $\ell=\pd(J(W))$ is the projective dimension of $J(W)$. 
We have
$$
\big(\sigma^n{\mathcal H}[{\bf X}](\sigma^{-1},-1)\big)\star W=\langle J(W)\rangle=\sum_{i=0}^\ell (-1)^i\langle E_i(W)\otimes R(W)\rangle
$$
so
$$
{\mathcal H}[{\bf X}](\sigma^{-1},-1)\star W=\big(\sum_{i=0}^\infty (-1)^iw_{d+i}\big)\star W=\sum_{i=0}^\ell (-1)^i \langle E_i(W)\rangle
$$
\end{proof}
\begin{example}
Let $V=\C$ and let $V_1=V_2=\cdots=V_d=\{0\}$. The rank function is the same as in Example~\ref{exmedges}.
$$
{\mathcal H}[{\bf X}](q,t)=1+q\sum_{i=1}^d{d\choose i}t^i\left(\sum_{j=0}^{i-1}(-1)^j{i-1\choose j}s_j\right).
$$
The ideal $J(W)={\mathfrak m}(W)^d$ where ${\mathfrak m}(W)$ is the maximal homogeneous ideal
in $K[V\otimes W^\star]\cong K[W^\star]\cong S(W)$.

For $d=1$ we have
$$
{\mathcal H}[{\bf X}](q,t)=1+qt,
$$
It follows that
$$
{\mathcal H}[{\bf X}](\sigma^{-1},-1)=1-\sigma^{-1}=s_1-s_{1,1}+s_{1,1,1}-\cdots
$$
This shows that the $i$-th free module in the free resolution
is $S(W)\otimes S_{1,1,\dots,1}W\cong S(W)\otimes \bigwedge^i(W)$.
So the minimal resolution is
$$
\cdots \to S(W)\otimes S_{1,1}(W)\to S(W)\otimes W\to {\mathfrak m}(W)\to 0,
$$
which is of course the Koszul resolution of the maximal ideal ${\mathfrak m}(W)$.
For $d=2$, we get
$$
{\mathcal H}[{\bf X}](q,t)=1+2qt+qt^2(1-s_1)
$$
and
$$
{\mathcal H}[{\bf X}](\sigma^{-1},-1)=1-\sigma^{-1}(1+s_1)=s_2-s_{2,1}+s_{2,1,1}-\cdots
$$
So this means the the equivariant minimal free resolution of ${\mathfrak m}(W)^2$ looks like
$$
\cdots \to S(W)\otimes S_{2,1,1}(W) 
\to S(W)\otimes S_{2,1}(W)\to S(W)\otimes S_2(W)\to {\mathfrak m}(W)^2\to 0.
$$
\end{example}\subsection{Nonnegativity results for the coefficients of ${\mathcal P}[{\bf X}]$ and ${\mathcal H}[{\bf X}](q,t)$}

\begin{corollary}\label{corNonneg}
Suppose that ${\bf X}=(X,\rk)$ is realizable over a field $K$ of characteristic 0.
\begin{enumerate}
\item
 \begin{equation}\label{eq21}
\sigma^{\rk(X)}{\mathcal H}[{\bf X}](\sigma^{-1},-1)=\sum_\lambda a_{\lambda}s_{\lambda}
\end{equation}
where $\lambda$ runs over all partitions with $|\lambda|\geq d$ and $a_{\lambda}\geq 0$ for all $\lambda$;
\item
$$
{\mathcal P}[{\bf X}]=\sum_{\lambda}(-1)^{|\lambda|}b_{\lambda} s_{\lambda}
$$
where $\lambda$ runs over all partitions with $|\lambda|<d$ and $b_{\lambda}\geq 0$ for all $\lambda$;

\item
$$
{\mathcal H}[{\bf X}](\sigma^{-1},-1)=\sum_{\lambda}(-1)^{|\lambda|}c_{\lambda}s_{\lambda}
$$
where $\lambda$ runs over all partitions $\lambda$ with $|\lambda|\geq d$ with
more than $|\lambda|/\rk(X)$ parts, and $c_{\lambda}\geq 0$ for all $\lambda$.
\end{enumerate}
\end{corollary}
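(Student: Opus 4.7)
The plan is to read each of the three bounds off the $\GL(W)$-representation-theoretic identities established in the preceding sections (Theorem~\ref{theo8}, Corollary~\ref{cor11}, Proposition~\ref{prop12}). The common preliminary step is to reduce to a realization with $V_X=(0)$: given any realization of ${\bf X}$, restrict the subspaces $V_x$ to a complement $V'$ of $V_X$ in $V$; the rank function is unchanged (this is the reduction already used in the proof of Theorem~\ref{theo8}). In the resulting realization $\dim V=\rk(X)$.

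For (1), Theorem~\ref{theo8} applied with $n=\rk(X)$ gives $\bigl(\sigma^{\rk(X)}{\mathcal H}[{\bf X}](\sigma^{-1},-1)\bigr)\star W=\langle J(W)\rangle$. Writing $\sigma^{\rk(X)}{\mathcal H}[{\bf X}](\sigma^{-1},-1)=\sum_\lambda a_\lambda s_\lambda$, one reads that $a_\lambda$ is the multiplicity of the irreducible $\GL(W)$-representation $S_\lambda(W)$ inside the genuine graded module $J(W)$ (for $\dim W\geq \ell(\lambda)$), so $a_\lambda\geq 0$. Since $J(W)=\prod_x J_x(W)$ is generated in degree $d$, no Schur component of total degree below $d$ occurs.

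For (2), Corollary~\ref{cor11} writes ${\mathcal P}[{\bf X}]=\sum_{i=0}^{d-1}(-1)^i u_i$ with $u_i$ homogeneous of degree $i$ and $u_i\star W=\langle H_i(W)\rangle$. Since each $H_i(W)$ is a genuine $\GL(W)$-representation, $u_i$ has nonnegative Schur coefficients. The coefficient of $s_\lambda$ in ${\mathcal P}[{\bf X}]$ is then $(-1)^{|\lambda|}$ times the nonnegative multiplicity $b_\lambda$ of $S_\lambda(W)$ inside $H_{|\lambda|}(W)$, which has the claimed form, and ${\mathcal P}[{\bf X}]$ is supported in degrees $<d$ by construction.

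For (3), Proposition~\ref{prop12} writes ${\mathcal H}[{\bf X}](\sigma^{-1},-1)=\sum_{i\geq 0}(-1)^i w_{d+i}$ with $w_{d+i}\star W=\langle \Tor_i(J(W),K)\rangle$, so each $w_{d+i}$ has nonnegative Schur coefficients. To obtain the part-count restriction on $\lambda$, I would use that $\Tor_i(J(W),K)$ is a subquotient of the Koszul term $J(W)\otimes_K \Lambda^i(V^*\otimes W)$, and that both factors decompose via Cauchy. Since $R(W)=S(V^*\otimes W)$ sees only Schur components $S_\nu$ with $\ell(\nu)\leq\rk(X)$, the exterior piece
\[
\Lambda^i(V^*\otimes W)=\bigoplus_{|\mu|=i,\,\ell(\mu)\leq\rk(X)} S_\mu(V^*)\otimes S_{\mu'}(W)
\]
forces $\mu_1=\ell(\mu')\geq i/\rk(X)$, and a Littlewood--Richardson analysis of $S_\nu(W)\otimes S_{\mu'}(W)$ is then expected to translate this into a lower bound on $\ell(\lambda)$.

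The main obstacle is the part-count restriction in (3). The naive Cauchy plus Littlewood--Richardson argument only yields $\ell(\lambda)\geq i/\rk(X)$, which is weaker than the claimed $\ell(\lambda)>(d+i)/\rk(X)$; to close the gap one probably has to exploit the more refined structure of the degree-$d$ generators of $J(W)$ via the polarized Schur functors of Section~\ref{sec:5}, together with the linearity of the resolution noted in Proposition~\ref{prop12}. Matching the sign $(-1)^{|\lambda|-d}$ coming out of the alternating sum with the declared $(-1)^{|\lambda|}$ on the restricted set is part of the same delicate bookkeeping.
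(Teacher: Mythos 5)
Parts (1) and (2) of your proposal coincide with the paper's proof: after reducing to a realization with $V_X=(0)$ (so that $\dim V=\rk(X)$), part (1) reads $a_\lambda$ off as the multiplicity of $S_\lambda(W)$ in the genuine module $J(W)$ via Theorem~\ref{theo8} (the paper gets the vanishing below degree $d$ from Remark~\ref{rem5} rather than from the generation degree, an immaterial difference), and part (2) is exactly the citation of Corollary~\ref{cor11}.

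For part (3) you correctly obtain the nonnegativity from Proposition~\ref{prop12}, but the part-count restriction is left open, and the route you sketch (Koszul complex plus Cauchy plus Littlewood--Richardson) is not the one the paper takes and, as you concede, does not close by itself. The paper's argument is much shorter: fix $\lambda$ with $k=\ell(\lambda)$ parts and nonzero coefficient, and take $W$ of dimension exactly $k$. Then $S_\lambda(W)\neq 0$, so $S_\lambda(W)$ occurs in some $\Tor_i(J(W),K)$, whence $i\leq \pd(J(W))=\pd(R(W)/J(W))-1<\dim(V\otimes W^\star)=\rk(X)\,k$ by Hilbert's syzygy theorem. No analysis of the internal structure of the Koszul terms or of the degree-$d$ generators is needed; the key move you are missing is to shrink $W$ to dimension $\ell(\lambda)$, which converts the part count of $\lambda$ into a bound on the number of variables of $R(W)$. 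That said, your bookkeeping concern is well founded: since the resolution is linear and $J(W)$ is generated in degree $d$, the representation $S_\lambda(W)$ sits in $\Tor_{|\lambda|-d}(J(W),K)$, so this argument actually yields $|\lambda|-d<\rk(X)\ell(\lambda)$ together with the sign $(-1)^{|\lambda|-d}$ --- precisely the shifted versions your approach produces; the bound $|\lambda|<\rk(X)\ell(\lambda)$ and the sign $(-1)^{|\lambda|}$ appearing in the statement differ from these by the shift $d$ (compare $V=K$, $V_1=(0)$, $d=1$, where $\lambda=(1^m)$ occurs with $\ell(\lambda)=m=|\lambda|/\rk(X)$). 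So the genuine gap relative to the paper is the ``specialize $\dim W=\ell(\lambda)$ and invoke the syzygy bound on projective dimension'' step, not a refined study of the generators of $J(W)$.
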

\begin{proof}
Assume, as before, that $V$ together with $V_x$, $x\in X$ form a realization of ${\bf X}$.
We may also assume that $V_X=(0)$.

(1) From Remark~\ref{rem5} follows that no $s_\lambda$ with $|\lambda|<d$ appears in the left-hand side of (\ref{eq21}).
If we choose $\dim W\geq |\lambda|$ then $S_{\lambda}(W)\neq 0$ and $\langle S_{\lambda}(W)\rangle$ appears
with a nonnegative coefficient on the right-hand side of (\ref{eq20}). Therefore, the coefficient
of $s_{\lambda}$ in $\sigma^{\rk(X)}{\mathcal H}[{\bf X}](\sigma^{-1},-1)$ is nonnegative.

(2) This follows from Corollary~\ref{cor11}.

(3) The nonnegativity of $c_\lambda$ follows from Proposition~\ref{prop12}.
If $\ell=\pd(J(W))$ is the projective dimension of $J(W)$, then we have
$$\ell=\pd(J(W))=\pd(R(W)/J(W))-1<\dim V\dim W=\rk(X)\dim W
$$
Suppose $\lambda=(\lambda_1,\dots,\lambda_k)$ and the coefficient of $s_\lambda$ in ${\mathcal H}[{\bf X}](\sigma^{-1},-1)$ is nonzero.
If $W$ is $k$-dimensional, then $S_{\lambda}(W)\neq 0$, so $E^{|\lambda|}(W)\neq 0$ and $|\lambda|\leq \ell<\rk(X)k$.
\end{proof}
\begin{conjecture}\label{conNonneg}
Corollary~\ref{corNonneg} is true, even if ${\bf X}=(X,\rk)$ is a polymatroid that is not realizable.
\end{conjecture}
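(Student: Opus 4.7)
The plan is to attempt a reduction to the realizable case via valuativity. By the valuative property of $\mathcal{G}[{\bf X}]$ established in Section~\ref{sec:8}, and the fact that both ${\mathcal P}[{\bf X}]$ and ${\mathcal H}[{\bf X}](q,t)$ are specializations of $\mathcal{G}[{\bf X}]$, each of the coefficients $a_\lambda$, $b_\lambda$, $c_\lambda$ of a fixed Schur function $s_\lambda$ is a valuative invariant of polymatroids, since extracting a Schur coefficient is a $\Z$-linear functional on $\Sym$. The first substantive step I would carry out is to express the indicator function of an arbitrary polymatroid base polytope as an integral combination of indicator functions of \emph{realizable} polymatroid base polytopes in the valuation group, ideally with nonnegative coefficients. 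Schubert polymatroids and lattice-path polymatroids are natural candidates for building blocks, since they are realizable over any field of characteristic $0$ and are known to span the polymatroid valuation group.

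If such a nonnegative decomposition exists, Corollary~\ref{corNonneg} transfers verbatim. The main obstacle is that realizable polymatroids presumably do not generate the valuation group with nonnegative coefficients: a signed combination of Schur-positive expressions is in general not Schur-positive. A natural way to deal with this is to exploit the specific support restrictions in the three parts of the Corollary — the degree bound $|\lambda|<d$ in part (2), and especially the part-count bound $\ell(\lambda)>|\lambda|/\rk(X)$ in part (3) — to force the Schur-components of the negative contributions to land outside the support and hence cancel. Making this work requires a finer understanding of how individual Schur components of $\mathcal{H}[{\bf X}](\sigma^{-1},-1)$ behave under polymatroid polytope subdivisions, and this is where I expect the principal combinatorial difficulty to lie.

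An alternative strategy, which I would pursue in parallel, is direct categorification. For every polymatroid ${\bf X}$, one would construct a $\GL(W)$-equivariant chain complex of free graded modules over a polynomial ring whose equivariant Euler characteristic in $\GHilb$ equals $\sigma^{\rk(X)}{\mathcal H}[{\bf X}](\sigma^{-1},-1)\star W$, with cohomology concentrated in the degree range dictated by Proposition~\ref{prop12}. Such a complex should recover the product-ideal complex $C_\bullet(W)$ in the realizable case but ought to be definable purely combinatorially from the rank function — perhaps via a matroid Schubert cell construction, a resolution built from the flats of $\bf X$, or a ``virtual'' arrangement attached to the rank function. Finding the right combinatorial recipe is the step I expect to be the main obstacle: once such a complex is in hand, the arguments of Corollary~\ref{cor11} and Proposition~\ref{prop12} should go through with essentially no change, yielding all three nonnegativity statements simultaneously.
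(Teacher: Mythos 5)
The statement you are asked to prove is stated in the paper as an open \emph{conjecture} (Conjecture~\ref{conNonneg}); the paper itself offers no proof, and explicitly lists it among the ``future directions.'' Your submission is likewise not a proof: it is a research program in which both proposed routes terminate exactly at the point where the mathematical content would have to begin, and you say so yourself. In the valuative route, the step you call ``the first substantive step'' --- writing the indicator function of an arbitrary polymatroid base polytope as a \emph{nonnegative} integral combination of indicator functions of realizable polymatroid base polytopes --- is not established, and there is no reason to expect it to hold: the natural spanning sets for the valuation group (Schubert-type polymatroids) give signed decompositions, and as you correctly observe, a signed combination of Schur-positive expressions need not be Schur-positive. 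Your fallback, that the support restrictions in parts (2) and (3) of Corollary~\ref{corNonneg} might force the negative contributions to cancel, is an unsubstantiated hope, not an argument; nothing in the paper controls how individual Schur components of ${\mathcal H}[{\bf X}](\sigma^{-1},-1)$ distribute across the pieces of a subdivision.

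The categorification route has the same character: you posit a $\GL(W)$-equivariant complex defined purely from the rank function whose Euler characteristic recovers $\sigma^{\rk(X)}{\mathcal H}[{\bf X}](\sigma^{-1},-1)\star W$ and whose homology is concentrated in the right degrees, but you do not construct it, and constructing it \emph{is} the conjecture in homological clothing. (Note also that the paper's own nonnegativity proof in the realizable case leans on Theorem~\ref{theoCH} (Conca--Herzog regularity of product ideals) and on Proposition~\ref{prop9}, both of which are statements about actual ideals in actual polynomial rings; there is no combinatorial surrogate for them supplied here.) In short: there is no proof to compare against in the paper, and your proposal does not close the gap --- it relocates it to two places (a nonnegative realizable decomposition in the valuation group, or a combinatorial chain complex) and stops.
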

\subsection{The Rees ring and the invariant $\widetilde{H}[{\bf X}](q,t,y)$}
Instead of looking at the $\GL(W)$-Hilbert series of $J(W)$, one could also consider the $\GL(W)$-Hilbert series of the
Rees ring
$$
R(W)[yJ(W)]=R(W)\oplus yJ(W)\oplus y^2J(W)^2\oplus \cdots
$$
where $y$ is an indeterminate. 
This Hilbert series is
$$
\sigma^n\sum_{i=0}^\infty {\mathcal H}[{\bf X}^i](\sigma^{-1},-1)y^i
$$
where
$$
{\bf X}^i=\underbrace{{\bf X}\oplus {\bf X}\oplus\cdots\oplus {\bf X}}_i.
$$
It is therefore natural to define the invariant
$$
\widetilde{{\mathcal H}}[{\bf X}](q,t,y):=\sum_{i=0}^\infty {\mathcal H}[{\bf X}^i](q,t)y^i.
$$
Another interesting ring is the subalgebra $T(W)$ of $R(W)$ generated by
$$
(W\otimes Z_1)(W\otimes Z_2)\cdots (W\otimes Z_d)
$$
The degree $kd$ part in $T(W)$ (or degree $k$ after rescaling) is equal to the degree $(kd,d)$ part in $R(W)$. If we take
$$
\sigma^n\widetilde{\mathcal H}[{\bf X}](\sigma^{-1},-1,z^{-1}),
$$
replace $s_{\lambda}$ by $z^{|\lambda|d}s_{\lambda}$ for all $\lambda$ and then
set $z=0$, then we obtain the Hilbert series of $T(W)$.

It was proven in \cite{Conca} that the algebra $T(W)$ is Koszul when $Z_1,Z_2,\dots,Z_d$ are transversal. If Conjecture~4.2 in that paper
is true, then  $T(W)$ is Koszul
for arbitrary subspaces $Z_1,\dots,Z_d$.
Such a Koszul duality would lead to new interesting
interpretations of the coefficients of $\widetilde{{\mathcal H}}$.

\section{The polarized Schur functor}\label{sec:5}
\subsection{The space $S_{\lambda}(Z_1,\dots,Z_d)$}
Assume again that ${\bf X}=(X,\rk)$ is a polymatroid,
$K$ is a field of characteristic $0$, and that we have a realization 
given by a vector space $V$ and subspaces $V_x$, $x\in X$.
Define $Z=V^\star$, and for every $x\in X$, let $Z_x=V_{x}^{\perp}$ be the set of all linear functionals on $V$ vanishing on $V_x$.
Also, for any $A\subseteq X$, let
$$
Z_A=V_A^{\perp}=\sum_{x\in A} Z_x.
$$
We have
$$
\rk(A)=\dim V-\dim V_A=\dim Z_A
$$
for all $A\subseteq X$.

Let $\Sigma_d$ be the symmetric group on $d$ letters. Its irreducible representations are $T_{\lambda}$
where $\lambda$ runs over all partitions of $d$.

Schur-Weyl duality gives a decomposition
$$
Z^{\otimes d}:=\underbrace{Z\otimes Z\otimes \cdots \otimes Z}_d\cong \bigoplus_{\lambda} S_{\lambda}Z\otimes T_{\lambda}
$$
as a representation of $GL(Z)\times \Sigma_d$.
Let
$$
\pi_{\lambda}:Z^{\otimes d}\to S_{\lambda} Z\otimes T_{\lambda}
$$
be the $GL(Z)\times \Sigma_d$-equivariant projection. 
There is a unique $\GL(Z)\times \Sigma_d$-equivariant
linear map
$$
\theta_{\lambda}:Z^{\otimes d}\otimes T_{\lambda}^\star\to S_{\lambda}(Z)
$$
such that
$$
\theta_{\lambda}(z\otimes \varphi)=(\id\otimes \varphi)\pi_{\lambda}(z)
$$
for every $z\in Z^{\otimes n}$ and $\varphi\in T_{\lambda}^\star$.
Note that $T_{\lambda}^\star\cong T_{\lambda}$ as representations of $\Sigma_d$.
\begin{definition}
We define
$$
S_{\lambda}(Z_1,Z_2,\dots,Z_d)=\theta_{\lambda}(Z_1\otimes Z_2\otimes \cdots \otimes Z_d\otimes T_{\lambda})
$$
\end{definition}
\begin{remark}
For a permutation $\tau\in \Sigma_d$ we have
\begin{multline}
S_{\lambda}(Z_1,\dots, Z_d)=
\theta_{\lambda}(Z_1\otimes \cdots \otimes Z_d)\otimes T_{\lambda})=
\theta_{\lambda}(\tau^{-1}(Z_1\otimes \cdots \otimes Z_d\otimes T_{\lambda}))=\\
\theta_{\lambda}(\tau^{-1}(Z_1\otimes \cdots \otimes Z_d)\otimes T_{\lambda})=
\theta_{\lambda}(Z_{\tau(1)}\otimes \cdots \otimes Z_{\tau(d)})\otimes T_{\lambda})=
S_{\lambda}(Z_{\sigma(1)},\dots,Z_{\sigma(d)}).
\end{multline}
In other words, $S_{\lambda}(Z_1,\dots,Z_d)$ does not depend on the order of $Z_1,\dots,Z_d$.
\end{remark}
Note that
$$
S_{\lambda}(\underbrace{Z,Z,\dots,Z}_d)=S_{\lambda}(Z).
$$
\subsection{The connection between $S_{\lambda}(Z_1,\dots,Z_d)$ and ${\mathcal H}[{\bf X}](q,t)$}
\begin{proposition}
Let us write
$$
\sigma^n{\mathcal H}[{\bf X}](\sigma^{-1},-1)=\sum_{\lambda}a_{\lambda}s_{\lambda}
$$
where $\lambda$ runs over all partitions with $|\lambda|\geq d$.
Then we have
$$
a_{\lambda}=\dim S_\lambda(Z_1,Z_2,\dots,Z_d,\underbrace{Z,\dots,Z}_{|\lambda|-d}).
$$
\end{proposition}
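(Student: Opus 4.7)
The plan is to leverage Theorem~\ref{theo8}, which gives $\big(\sigma^n {\mathcal H}[{\bf X}](\sigma^{-1},-1)\big) \star W = \langle J(W) \rangle$ where $n = \dim V$. Setting $m = |\lambda|$, this identifies $a_\lambda$ with the multiplicity of the irreducible $\GL(W)$-representation $S_\lambda W$ in the graded $\GL(W)$-module $J(W)$, as soon as $\dim W$ is large enough that $S_\lambda W$ is nonzero. I will compute this multiplicity directly by writing down a generating set of $J(W)$ and extracting the $S_\lambda W$-isotypic component using Schur-Weyl duality.

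First, I will localize to a single degree. By Cauchy's decomposition $S^k(Z \otimes W) \cong \bigoplus_{|\mu|=k} S_\mu Z \otimes S_\mu W$, the representation $S_\lambda W$ can appear only in the degree $m$ part of $R(W)$, hence only in $J(W)_m$. Since $J(W) = J_1(W) \cdots J_d(W)$ is a product of ideals generated in degree one by the spaces $Z_x \otimes W \subseteq Z \otimes W$, the piece $J(W)_m$ is the image in $S^m(Z \otimes W)$ of the multiplication map from $\Phi \otimes W^{\otimes m}$, where
$$\Phi := Z_1 \otimes \cdots \otimes Z_d \otimes \underbrace{Z \otimes \cdots \otimes Z}_{m-d} \subseteq Z^{\otimes m},$$
via the natural isomorphism $(Z \otimes W)^{\otimes m} \cong Z^{\otimes m} \otimes W^{\otimes m}$.

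Next, I will apply the exact functor $\Hom_{\GL(W)}(S_\lambda W, -)$ to the surjection $\Phi \otimes W^{\otimes m} \to J(W)_m \hookrightarrow S^m(Z \otimes W)$. By Schur-Weyl duality, $\Hom_{\GL(W)}(S_\lambda W, W^{\otimes m}) \cong T_\lambda$, so the source becomes $\Phi \otimes T_\lambda$. Similarly, $\Hom_{\GL(W)}(S_\lambda W, S^m(Z \otimes W)) \cong \Hom_{\Sigma_m}(T_\lambda, Z^{\otimes m}) \cong S_\lambda Z$. I therefore obtain an induced map $\Phi \otimes T_\lambda \to S_\lambda Z$ whose image is $\Hom_{\GL(W)}(S_\lambda W, J(W)_m)$, so that $a_\lambda$ is the dimension of this image.

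The last step is to identify the induced map $\Phi \otimes T_\lambda \to S_\lambda Z$ with the restriction of $\theta_\lambda$ to $\Phi \otimes T_\lambda^\star$, using the canonical $\Sigma_m$-equivariant self-duality $T_\lambda \cong T_\lambda^\star$. Once this identification is made, the image is $S_\lambda(Z_1, \ldots, Z_d, Z, \ldots, Z)$ with $m - d$ copies of $Z$ by definition, yielding the claimed formula. The main obstacle I expect is carefully tracking the $\Sigma_m$-action through the Schur-Weyl identifications to confirm that the induced map really coincides with $\theta_\lambda$ up to this self-duality rather than some twist; fortunately Schur's lemma shows the space of $\GL(Z) \times \Sigma_m$-equivariant maps $Z^{\otimes m} \otimes T_\lambda \to S_\lambda Z$ is one-dimensional, so that the image of $\Phi \otimes T_\lambda$ is independent of the chosen scalar, and the remainder is routine bookkeeping with Cauchy's formula and Schur-Weyl duality.
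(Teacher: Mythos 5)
Your proposal is correct and follows essentially the same route as the paper: both localize to degree $|\lambda|$ via Cauchy's formula, realize $J(W)_{|\lambda|}$ as the image of $Z_1\otimes\cdots\otimes Z_d\otimes Z^{\otimes(|\lambda|-d)}\otimes W^{\otimes|\lambda|}$ under the projection to $S^{|\lambda|}(Z\otimes W)$, and invoke Schur's lemma to identify the induced map on the $S_\lambda W$-isotypic piece with $\theta_\lambda$ up to a nonzero scalar. The only cosmetic difference is that you phrase the extraction of the isotypic component as applying $\Hom_{\GL(W)}(S_\lambda W,-)$ rather than projecting onto $S_\lambda W\otimes S_\lambda Z$, which is the same computation.
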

\begin{proof}
Let $r=|\lambda|$ and  ${\mathfrak m}(W)$ be the maximal homogeneous ideal of $R(W)$.
The degree $r$ part of $J(W)$ is 
$$
J_1(W)J_2(W)\cdots J_d(W){\mathfrak m}(W)^{r-d}.
$$
Set $U=W\otimes V^\star=W\otimes Z$ and $U_i=W\otimes Z_i$.
Then Cauchy's formula tells us that
$$R(W)=S(W\otimes Z)=\bigoplus_{\lambda} S_{\lambda}W\otimes S_{\lambda}Z.
$$
The degree $r$ part of $J(W)$ is
$$
U_1\cdot U_2\cdots U_d\cdot U^{r-d}\subset S_r(U)=\bigoplus_{|\lambda|=r} S_{\lambda}W\otimes S_{\lambda}Z.
$$
So if 
$$
\pi_r^U:\underbrace{U\otimes U\otimes \cdots \otimes U}_r\to S_r(U)
$$
is the canonical projection, then the degree $r$ part of $J(W)$
is 
$$
\pi_r(U_1,U_2,\dots,U_d,\underbrace{U,\dots,U}_{r-d}).
$$
Let $\gamma_\lambda:S_r(U)\to S_{\lambda} W\otimes S_\lambda Z$ be the projection.
The isotypic component of $J(W)$ for the representation $S_\lambda(W)$ is
$$
\gamma_\lambda(\pi_r(U_1\otimes \cdots \otimes U_{d}\otimes U^{r-d})).
$$
We have
$$
U^{\otimes r}=(Z\otimes W)^{\otimes r}=Z^{\otimes r}\otimes W^{\otimes r}=
\bigoplus_{\lambda} S_{\lambda}(W)\otimes T_\lambda\otimes Z^{\otimes r}\cong 
\bigoplus_{\lambda} S_{\lambda}(W)\otimes Z^{\otimes r}\otimes T_\lambda.
$$
If we first project $U^{\otimes r}$ onto $S_{\lambda}(W)\otimes Z^{\otimes r}\otimes T_{\lambda}$
and then we apply
$$
\id\otimes\pi_{\lambda}^Z:S_{\lambda}(W)\otimes Z^{\otimes r}\otimes T_{\lambda}\to S_{\lambda}W\otimes S_\lambda Z
$$
then we get a nonzero $\GL(V)\times \GL(Z)\times \Sigma_r$ equivariant linear map
$$
U^{\otimes r}\to S_{\lambda}W\otimes S_{\lambda}Z
$$
This map must be, up to a non-zero scalar, equal to the composition $\gamma_\lambda\circ \pi_r$. 
It follows that
\begin{multline*}
\gamma_\lambda(\pi_r(U_1\otimes \cdots \otimes U_{d}\otimes U^{r-d}))=
\id\otimes\pi_{\lambda}(S_\lambda(W)\otimes Z_1\otimes \cdots \otimes Z_d\otimes Z^{r-d}\otimes T_\lambda)=\\
=
S_{\lambda}W\otimes S_\lambda(Z_1,\dots,Z_d,\underbrace{Z,\dots,Z}_{r-d}).
\end{multline*}
So, as $\GL(W)$-modules, we have an isomorphism
$$
J(W)\cong \bigoplus_{\lambda}S_{\lambda}(Z_1,Z_2,\dots,Z_d,\underbrace{Z,\dots,Z}_{|\lambda|-d})\otimes S_{\lambda}(W).
$$
Since $a_\lambda$ is the multiplicity of $S_\lambda W$ in $J(W)$, we get
$$
a_{\lambda}=\dim  S_\lambda(Z_1,\dots,Z_d,\underbrace{Z,\dots,Z}_{r-d}).
$$
\end{proof}

For $A\subseteq X$, let us define
$$
S_{\lambda,A}:=S_\lambda(V_{x_1},\dots,V_{x_k},\underbrace{V,\dots,V}_{|\lambda|-k})
$$
where $k=|A|$ and $A=\{x_1,\dots,x_k\}$. If $|\lambda|<k$, then we define $S_{\lambda,A}=0$.
Define
$$
C_{\lambda,k}=\bigoplus_{|A|=k} S_{\lambda,A}.
$$
Then we get
$$
C_{k}=\bigoplus_{\lambda}C_{\lambda,k}\otimes S_{\lambda}(W).
$$
The maps in the complex (\ref{eq:ScomplexW}) are $\GL(W)$-equivariant,
and by taking the isotypic component for $S_{\lambda(W)}$
we get a  complex
$$
0\to C_{\lambda,\ell}\otimes S_{\lambda}(W)\to \cdots \to C_{\lambda,1}\otimes S_{\lambda}(W)\to C_{\lambda,0}\otimes S_{\lambda}(W)\to 0
$$
where $\ell=\min\{d,|\lambda|\}$.
Since all maps in this complex are $\GL(W)$-equivariant, the complex is obtaind from a complex
\begin{equation}\label{eq:complex_lambda}
0\to C_{\lambda,\ell}\to \cdots \to C_{\lambda,1}\to C_{\lambda,0}\to 0
\end{equation}
by tensoring it by $S_{\lambda}(W)$.
The map $\partial_k:C_{\lambda,k}\to C_{\lambda,k-1}$ can
be written as $\partial_k=\sum_{A,B}\partial_{k}^{A,B}$, where
$$
\partial_k^{A,B}:S_{\lambda,A}\to S_{\lambda,B}
$$
Suppose that $A=\{i_1,i_2,\dots,i_k\}$ with $i_1<i_2<\cdots < i_k$,
then we have
$$
\partial_k^{A,B}:=
\left\{
\begin{array}{ll}
0 & \mbox{if $B\not\subseteq A$;}\\
(-1)^r\id & \mbox{if
$B=\{i_1,\dots,i_{r-1},i_{r+1},\dots,i_k\}$.}\end{array}\right.
$$

Let $H_{\lambda,i}$ be the $i$-th homology group of (\ref{eq:complex_lambda}).
From
$$
H_i(W)=\bigoplus_{\lambda}H_{\lambda,i}\otimes S_{\lambda}(W).
$$
and Corollary~\ref{cor11} now follows the following statement.
\begin{corollary}
Suppose that $V_X=0$, which means that
 $Z_X=Z$.
Write 
$${\mathcal P}[{\bf X}]=\sum_{\lambda}(-1)^{|\lambda|}b_{\lambda}s_{\lambda}.
$$
Then we have
$$
\dim H_{\lambda,i}=\left\{
\begin{array}{ll}
0 & \mbox{if $|\lambda|\neq i$;}\\
b_{\lambda} & \mbox{if $|\lambda|=i$.}
\end{array}\right.
$$
\end{corollary}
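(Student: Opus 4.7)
The plan is to match the $\GL(W)$-isotypic multiplicities of $H_i(W)$ coming from two independent descriptions: on one hand via Corollary~\ref{cor11}, and on the other hand via the $\GL(W)$-equivariant decomposition $H_i(W)=\bigoplus_\lambda H_{\lambda,i}\otimes S_\lambda(W)$ induced by the complex (\ref{eq:complex_lambda}) displayed just above the statement.

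First I would reconcile the two Schur expansions of ${\mathcal P}[{\bf X}]$. Comparing the expression ${\mathcal P}[{\bf X}]=u_0-u_1+u_2-\cdots+(-1)^{d-1}u_{d-1}$ from Corollary~\ref{cor11}, in which $u_i$ is homogeneous of degree $i$, with ${\mathcal P}[{\bf X}]=\sum_\lambda(-1)^{|\lambda|}b_\lambda s_\lambda$, one reads off $u_i=\sum_{|\lambda|=i}b_\lambda s_\lambda$. Applying the operation $\star W$ gives
$$u_i\star W=\sum_{|\lambda|=i}b_\lambda\langle S_\lambda(W)\rangle,$$
which by Corollary~\ref{cor11} equals $\langle H_i(W)\rangle$.

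Next, from the isotypic decomposition $H_i(W)=\bigoplus_\lambda H_{\lambda,i}\otimes S_\lambda(W)$ one also has $\langle H_i(W)\rangle=\sum_\lambda \dim(H_{\lambda,i})\langle S_\lambda(W)\rangle$. Choosing $W$ of dimension large enough so that $S_\lambda(W)\neq 0$ for every partition $\lambda$ with $|\lambda|\leq d-1$, the classes $\langle S_\lambda(W)\rangle$ corresponding to distinct irreducible $\GL(W)$-representations are linearly independent, so comparing coefficients yields $\dim H_{\lambda,i}=b_\lambda$ when $|\lambda|=i$ and $\dim H_{\lambda,i}=0$ when $|\lambda|\neq i$ (at least for $\lambda$ whose Young diagram fits inside $W$). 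Since the spaces $H_{\lambda,i}$ are intrinsic to the complex (\ref{eq:complex_lambda}) and do not involve $W$, letting $\dim W$ grow extends the conclusion to all partitions $\lambda$.

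The only issue to verify along the way is that passing to the $S_\lambda(W)$-isotypic component of $H_i(W)$ agrees with $H_{\lambda,i}\otimes S_\lambda(W)$, i.e.\ that taking homology commutes with the isotypic decomposition. This is immediate in characteristic zero: $\GL(W)$ is linearly reductive and the differentials of (\ref{eq:ScomplexW}) are $\GL(W)$-equivariant, so their kernels and images split as direct sums over isotypic types, and the homology of (\ref{eq:ScomplexW}) splits accordingly into the homologies of the subcomplexes (\ref{eq:complex_lambda}) tensored by $S_\lambda(W)$. I do not foresee a substantial obstacle beyond recording this reductivity remark.
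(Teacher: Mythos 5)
Your argument is correct and is exactly the one the paper intends: the corollary is stated as an immediate consequence of the decomposition $H_i(W)=\bigoplus_\lambda H_{\lambda,i}\otimes S_\lambda(W)$ together with Corollary~\ref{cor11}, and you have simply spelled out the multiplicity comparison (including the harmless points about taking $\dim W$ large and about reductivity making homology commute with the isotypic decomposition).
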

The dimension of
$$
S_{\lambda}Z=S_{\lambda}(\underbrace{Z,Z,\dots,Z}_{d})
$$
(where $d=|\lambda|$)
is exactly the number of Young Tableau of shape $\lambda$ and entries
in the set $\{1,2,\dots,n\}$. In fact, given a basis of $Z$, an explicit
basis of $S_{\lambda}Z$ can be given in terms of these Young tableaux
(see~\cite[\S8.1, Theorem 1]{Fulton}).
\begin{problem}
Give an combinatorial interpretation of
$$
\dim S_\lambda(Z_1,Z_2,\dots,Z_d),
$$
perhaps in terms of certain fillings of Young diagrams. 
Moreover, can one give an explicit basis of $S_{\lambda}(Z_1,\dots,Z_d)$?
\end{problem}
Such a combinatorial
setup might still have a meaning for non-realizable polymatroids.
An explicit bases of $S_d(Z_1,\dots,Z_d)$ was given in \cite[Corollary 5.10]{Conca}
in case the subspaces $Z_1,\dots,Z_d$ of $Z$ are generic.

Also, one can ask the same questions for $H_{\lambda}:=H_{\lambda,|\lambda|}$.
Such results might prove Conjecture~\ref{conNonneg}.

\section{Quasi-symmetric functions associated to polymatroids}
\subsection{The Hopf algebras $\Mat$ and $\PolyMat$}
Although most of the Hopf algebras in this section can be defined
over the integers $\Z$, we will choose to define them over $\Q$ for simplicity. In \cite{Schmitt} the matroid Hopf algebra $\Mat$
was introduced (see also \cite{CS,CS2,CS3}). This construction easily generalizes to polymatroids.

Let us first introduce the Hopf algebra of polymatroids, $\PolyMat$. 
For a polymatroid ${\bf X}=(X,\rk)$, we denote its isomorphism class by $[{\bf X}]$.
As a $\Q$-vector space, $\PolyMat$ has a basis consisting of all isomorphism classes
of polymatroids.
We define a product by
$$
[{\bf X}]\cdot [{\bf Y}]:=[{\bf X}\oplus{\bf Y}].
$$
Also, a coproduct $\Delta:\PolyMat\to \PolyMat\otimes_{\Q} \PolyMat$ is defined by
$$
\Delta[{\bf X}]=\sum_{A\subseteq X} [{\bf X}\mid_A]\otimes [{\bf X}/A].
$$
This coproduct is coassociative, but in general not cocommutative.
The unit is $[\emptyset]$ where $\emptyset$ denotes the empty polymatroid.
A counit $\epsilon:\PolyMat\to \Q$ is given by
$$\epsilon([{\bf X}])=\left\{
\begin{array}{ll}
1 & \mbox{if ${\bf X}=\emptyset$}\\
0 & \mbox{otherwise.}\end{array}\right.
$$
The bialgebra $\PolyMat$ has a grading such that $[{\bf X}]$ has degree $|X|$ for every polymatroid ${\bf X}=(X,\rk)$.
This makes $\PolyMat$ into a connected graded bialgebra. It was shown in \cite{MiMo} that one can
define an antipode such that $\PolyMat$ becomes a Hopf algebra.

Let $\Mat$ be the subspace spanned by all $[{\bf X}]$ where ${\bf X}$
is  a matroid. Then $\Mat$ is sub-Hopf algebra of $\PolyMat$.

\subsection{The Hopf algebra $\NSym$}
Let $\NSym\Q\langle p_1,p_2,p_3,\dots\rangle$ be the ring
of noncommutitive polynomials in the indeterminates $p_1,p_2,p_3,\dots$.
We define a Hopf algebra structure on $\NSym$ as follows.
The  comultiplication $\Delta:\NSym\to\NSym\otimes \NSym$ by
$$
\Delta(p_i)=p_i\otimes 1+1\otimes p_i
$$
for all $i$. The counit $\epsilon: \NSym\to \Q$ is defined by
$$
\epsilon(p_i)=0
$$
for all $i$. The antipode is defined
by
$$
p_i\mapsto -p_i
$$
for all $i$. A basis of $\NSym$ is given by all noncommutative monomials in $p_1,p_2,\dots$.
It is also convenient to have a different basis. We define $h_1,h_2,\dots$ by
the following equality of generating functions in $\NSym[[t]]$.
Define
$$
H(t)=h_1t+h_2t^2+h_3t^3+\cdots
$$
and 
$$
P(t)=p_1t+p_2t^2+p_3t^3+\cdots.
$$
Then $h_1,h_2,h_3,\dots$ are defined by
$$
1+H(t)=\exp(P(t)).
$$
Here $\exp(t)$ denotes the  power series of the exponential function
$$
\exp(t)=1+t+\frac{t^2}{2!}+\frac{t^3}{3!}+\cdots.
$$
So we have 
\begin{equation}\label{eq:hk}
h_k=\sum_{r=1}^k\frac{1}{r!}\big(\sum_{i_1,\dots,i_r\atop
i_1+\cdots+i_r=k}p_{i_1}p_{i_2}\cdots p_{i_r}\big).
\end{equation}
If $\alpha=(i_1,\dots,i_r)$ is a sequence of positive integers, then we will write 
$p_{\alpha}$ instead of $p_{i_1}p_{i_2}\cdots p_{i_r}$
and $h_{\alpha}$ instead of $h_{i_1}h_{i_2}\cdots h_{i_r}$.
. The length of $\alpha$ is
$\ell(\alpha):=r$, and we define $|\alpha|=i_1+i_2+\cdots+i_r$.
We can rewrite (\ref{eq:hk}) as
\begin{equation}\label{eq:hk2}
h_k=\sum_{\alpha\atop |\alpha|=k}\frac{p_{\alpha}}{\ell(\alpha)!}
\end{equation}
Inverting gives
$$
P(t)=\log(1+H(t))
$$
where
$$
\log(1+t)=t-\frac{t^2}{2}+\frac{t^3}{3}-\cdots,
$$
so
$$
p_k=\sum_{r=1}^k\frac{(-1)^{r-1}}{r}\sum_{i_1,\dots,i_r\atop
i_1+\cdots+i_r=k}h_{i_1}h_{i_2}
\cdots h_{i_r}.
$$
Again, we can rewrite this as
\begin{equation}\label{eq:pk2}
p_{k}=\sum_{\alpha}\frac{(-1)^{\ell(\alpha)-1}h_{\alpha}}{\ell(\alpha)}.
\end{equation}

From
$$
\Delta(P(t))=P(t)\otimes 1+1\otimes P(t)
$$
follows that
\begin{multline*}
\Delta(1+H(t))=\Delta(\exp(P(t))=\Delta(\exp(P(t)\otimes 1+1\otimes P(t)))=\\
=
\exp(P(t)\otimes 1)\exp(1\otimes P(t))=
((1+H(t))\otimes 1)\cdot (1\otimes (1+H(t)))=\\
=(1+H(t))\otimes (1+H(t))
\end{multline*}
inside the ring
$$
\NSym\otimes \NSym[[t]]=\NSym[[t]]\otimes_{\Q[[t]]}\NSym[[t]].
$$
If we use the convention $h_0=1$, then we have
$$
\Delta(h_k)=\sum_{i=0}^k h_i\otimes h_{k-i}.
$$
The Hopf algebra $\NSym$ is not commutative, but it is cocommutative.

\subsection{The Hopf algebra $\QSym$}
Let $\QSym$ be the Hopf algebra of quasi-symmetric functions.
For a sequence $\alpha=(\alpha_1,\dots,\alpha_r)$
of positive integers we define
an element $M_{\alpha} \in \Q[x_1,x_2,\dots]$ by
$$
M_{\alpha}:=\sum_{0<i_1<i_2<\cdots<i_r}x_1^{\alpha_1}x_2^{\alpha_2}\cdots x_r^{\alpha_r}.
$$
 The ring $\QSym$ is the subring of $\Q[x_1,x_2,x_3,\dots]$ spanned by all $M_{\alpha}$.
The $\Q$-vector space  $\QSym$ is closed under multiplication.
We will view $\QSym$ as the graded dual vector space of $\NSym$
where the $\{M_{\alpha}\}$ form a dual basis of the $\{h_{\alpha}\}$.
As such, $\QSym$ is a Hopf algebra in a natural way. Also, let $\{P_{\alpha}\}$
be a dual basis of $\{p_{\alpha}\}$. 
We have that
$$
P_{\alpha}P_{\beta}=\sum_{\gamma} P_{\gamma}
$$
Where $\gamma$ runs over all 
$$
{\ell(\alpha)+\ell(\beta)\choose \ell(\alpha)}
$$
shuffles of $\alpha$ and $\beta$.
If $\alpha=(\alpha_1,\dots,\alpha_r)$, then
$$
\Delta(P_{\alpha})=\sum_{\beta,\gamma;\beta\gamma=\alpha} P_{\beta}\otimes P_{\gamma}.
$$
The antipode on $\QSym$ is given by
$$
P_{\alpha}\mapsto (-1)^{\ell(\alpha)}P_{\alpha}.
$$
From (\ref{eq:hk2}) follows that
\begin{equation}\label{eq:hk3}
h_{\alpha}=h_{i_1}\cdots h_{i_r}=\sum_{\beta_1,\dots,\beta_r\atop
|\beta_1|=i_1,\dots,|\beta_r|=i_r} \frac{p_{\beta_1\beta_2\cdots \beta_r}}{\ell(\beta_1)!\cdots \ell(\beta_r)!},
\end{equation}
where $\alpha=(i_1,\dots,i_r)$.
Dualizing (\ref{eq:hk3}) gives
$$
P_{\beta}=\sum_r \sum_{\beta_1\cdots \beta_r\atop \beta=\beta_1\cdots\beta_r}\frac{M_{
|\beta_1|,\dots,|\beta_r|}}
{\ell(\beta_1)!\ell(\beta_2)!\cdots \ell(\beta_r)!}.
$$
From (\ref{eq:pk2}) follows that
\begin{equation}\label{eq:pk3}
p_{\alpha}=p_{i_1}\cdots p_{i_r}=\sum_{\beta_1,\dots,\beta_r\atop
|\beta_1|=i_1,\dots,|\beta_r|=i_r} \frac{(-1)^{\ell(\beta_1)+\cdots+\ell(\beta_r)-r}h_{\beta_1\beta_2\cdots \beta_r}}{\ell(\beta_1)\cdots \ell(\beta_r)}.
\end{equation}
Dualizing (\ref{eq:pk3}) yields
and
\begin{equation}\label{eq:pk4}
M_{\beta}=\sum_r \sum_{\beta_1\cdots \beta_r\atop \beta=\beta_1\cdots\beta_r}(-1)^{\ell(\beta)-r}
\frac{P_{|\beta_1|,\dots,|\beta_r|}}{\ell(\beta_1)\cdots \ell(\beta_r)}.
\end{equation}

\subsection{Combinatorial Hopf algebras and the invariant ${\mathcal F}[{\bf X}]$}\label{sec:7.4}
Billera, Jia and Reiner defined a homomorphism of Hopf algebras
$$
{\mathcal F}:\Mat\to \QSym.
$$
One way to define this map is using a universal property of $\QSym$.

A combinatorial Hopf algebra (over $\Q$) is a pair $({\mathcal H},\zeta)$
where ${\mathcal H}=\bigoplus_{d\geq 0} {\mathcal H}_d$
is a graded Hopf algebra with ${\mathcal H}_0=\Q$ and ${\mathcal H}_d$ is
finite dimensional for all $d$, and $\zeta:{\mathcal H}\to \Q$ is a character
(i.e., a algebra homorphism).
A morphism $\varphi:({\mathcal H}',\zeta')\to ({\mathcal H},\zeta)$
is a Hopf-algebra morphism $\varphi:{\mathcal H}'\to {\mathcal H}$
such that $\zeta\circ \varphi=\zeta'$.

Aguiar, Bergeron and Sottile proved that in there exists a terminal
object in the category of combinatorial Hopf algebras over $\Q$,
namely $(\QSym,\zeta)$ where
$\zeta=\zeta_{\QSym}$ is defined by
$$
\zeta(M_{\alpha})=\left\{
\begin{array}{ll}
1 & \mbox{if $\ell(\alpha)\leq 1$};\\
0 & \mbox{otherwise.}\end{array}\right.
$$
We can define a character $\zeta=\zeta_{\Mat}$ on $\Mat$ by
$$
\zeta([{\bf X}])=\left\{
\begin{array}{ll}
1 & \mbox{if ${\bf X}$ completely splits in to loop and coloop matroids;}\\
0 & \mbox{otherwise.}\end{array}\right.
$$
Since $(\QSym,\zeta_{\QSym})$ is terminal, there is a unique
homomorphism 
$${\mathcal F}:(\Mat,\zeta_M)\to (\QSym,\zeta_{\QSym})$$
of combinatorial Hopf algebras.

Although ${\mathcal F}$ is a powerful invariant for matroids, it cannot distinguish
between a loop and an isthmus.
\subsection{The new quasi-symmetric function invariant ${\mathcal G}[{\bf X}]$} 
It sometimes is convention to shift the indices by $1$, so for
a vector $a=(a_1,a_2,\dots,a_d)$ of nonnegative integers,
we define
$$
U_{(a_1,a_2,\dots,a_d)}:=P_{a_1+1,a_2+1,\dots,a_d+1}.
$$
\begin{definition}
We define a $\Q$-linear map
$$
{\mathcal G}:\PolyMat\to \QSym
$$
defined by
$$
{\mathcal G}[{\bf X}]=\sum_{\underline{X}}U_{r(\underline{X})},
$$
where $\underline{X}$ runs over all maximal chains
$$
\underline{X}:\emptyset=X_0\subset X_1\subset \cdots \subset X_d=X
$$
and 
$$
r(\underline{X}):=(\rk(X_1)-\rk(X_0),\rk(X_2)-\rk(X_1),\dots,\rk(X_d)-\rk(X_{d-1})).
$$
\end{definition}
We call $r(\underline{X})$ the rank sequence for $\underline{X}$. The multiset of all $r(\underline{X})$
where $\underline{X}$ runs over all maximal chains in $X$, we will call {\em the rank sequences for ${\bf X}$}.
If ${\bf X}=(X,\rk)$ then there are exactly $|X|!$ rank sequences.

\begin{lemma}
The linear map ${\mathcal G}$ is a homomorphism of Hopf algebras.
\end{lemma}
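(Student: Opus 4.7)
The plan is to check separately that $\mathcal{G}$ preserves multiplication, comultiplication, unit and counit, translating each Hopf-algebra identity into a combinatorial statement about maximal chains.

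First I would observe that the reindexing $U_{(a_1,\dots,a_d)} = P_{a_1+1,\dots,a_d+1}$ is harmless: the shuffle product formula $P_\alpha P_\beta = \sum_{\gamma} P_\gamma$ and the deconcatenation coproduct $\Delta(P_\alpha) = \sum_{\alpha=\beta\gamma} P_\beta \otimes P_\gamma$ both depend only on the compositional structure of the index sequence, not on the actual entries, so the same formulas hold for $U$. This reduces the whole problem to combinatorics of chains.

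For multiplicativity, I would set up a bijection between maximal chains $\underline{Z}$ in $X \sqcup Y$ and triples $(\underline{X},\underline{Y},s)$ consisting of a maximal chain in $X$, a maximal chain in $Y$, and a shuffle $s$ specifying at each step whether the element added comes from $X$ or $Y$. Using $\rk_{X \sqcup Y}(A \sqcup B) = \rk_X(A) + \rk_Y(B)$, one sees immediately that the rank sequence $r(\underline{Z})$ is precisely the $s$-shuffle of $r(\underline{X})$ and $r(\underline{Y})$. Summing over $\underline{Z}$ and comparing with the shuffle-product formula for $U$ yields $\mathcal{G}[\mathbf{X} \oplus \mathbf{Y}] = \mathcal{G}[\mathbf{X}] \cdot \mathcal{G}[\mathbf{Y}]$.

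For comultiplicativity, I would parametrize pairs $(\underline{X}, k)$ where $\underline{X}: \emptyset = X_0 \subset \cdots \subset X_d = X$ is a maximal chain and $0 \le k \le d$ is a cut position. Such a pair determines a subset $A := X_k \subseteq X$, a maximal chain $\underline{A}: X_0 \subset \cdots \subset X_k$ in $\mathbf{X}|_A$, and a maximal chain $X_{k+1} \setminus A \subset \cdots \subset X_d \setminus A$ in $\mathbf{X}/A$. Conversely, every triple $(A, \underline{A}, \underline{B})$ with $\underline{A}$ a chain in $\mathbf{X}|_A$ and $\underline{B}$ a chain in $\mathbf{X}/A$ assembles uniquely into such a pair $(\underline{X}, k)$. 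The definition $\rk_{X/A}(B) = \rk(A \cup B) - \rk(A)$ immediately gives $r(\underline{X}) = r(\underline{A}) \cdot r(\underline{B})$ (concatenation). Combining this with the deconcatenation formula for $\Delta U$ shows that $\Delta \mathcal{G}[\mathbf{X}] = \sum_{A \subseteq X} \mathcal{G}[\mathbf{X}|_A] \otimes \mathcal{G}[\mathbf{X}/A] = (\mathcal{G} \otimes \mathcal{G})\Delta[\mathbf{X}]$.

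The unit and counit compatibilities are essentially immediate: the empty polymatroid has the empty chain with empty rank sequence, giving $U_{()} = 1$, while for nonempty $\mathbf{X}$ every term of $\mathcal{G}[\mathbf{X}]$ is a $U_\alpha$ with $\alpha$ nonempty, hence killed by the $\QSym$-counit. I do not expect any serious obstacle here; the only point requiring care is the contraction step in the comultiplication argument, where one has to check that the rank sequence of the upper portion of $\underline{X}$, computed with respect to the rank function on $\mathbf{X}/A$, agrees with the corresponding suffix of $r(\underline{X})$. That verification is a one-line telescoping, so the proof should be short and entirely combinatorial.
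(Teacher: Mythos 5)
Your proposal is correct and follows essentially the same route as the paper: the paper likewise notes that a factorization $r(\underline{X})=\alpha\beta$ corresponds to cutting the chain at $A=X_{\ell(\alpha)}$, giving rank sequences for ${\bf X}\mid_A$ and ${\bf X}/A$, and that the rank sequences of ${\bf X}\oplus{\bf Y}$ are exactly the shuffles of those of ${\bf X}$ and ${\bf Y}$. Your extra remarks (the harmlessness of the index shift in $U_\alpha$ and the telescoping check for the contraction) are correct details that the paper leaves implicit.
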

\begin{proof}
If ${\bf X}$ has a rank sequence $\gamma=r(\underline{X})$ and $\gamma=\alpha\beta$, then
$\alpha$ is a rank sequence for ${\bf X}\mid_A$ and $\beta$ is a rank sequence for ${\bf X}/A$,
where $A=X_i$ and $i=\ell(\alpha)$ is the length of $\alpha$. So we have
\begin{multline}
{\mathcal G}\otimes {\mathcal G}\circ \Delta([{\bf X}])=\sum_{A\subseteq X}
{\mathcal G}[{\bf X}\mid_A]\otimes {\mathcal G}[{\bf X}/A]=\\=
\sum_{A\subseteq X} \sum_{\alpha}\sum_{\beta}U_{\alpha}\otimes U_{\beta}=
\Delta(\sum_{\gamma}U_{\gamma})=\Delta({\mathcal G}[{\bf X}]),
\end{multline}
where $\alpha$ runs over all rank sequences for ${\bf X}\mid_A$, $\beta$ runs over all rank sequences
of ${\bf X}/A$ and $\gamma$ runs over
all rank sequences for ${\bf X}$.

To see that ${\mathcal G}$ commutes with the product, note that the rank sequences for ${\bf X}\oplus {\bf Y}$
are exactly all shuffles of rank sequences for ${\bf X}$ and ${\bf Y}$.

It easy to verify that ${\mathcal G}$ is compatible with the unit and counit.
\end{proof}
For a vector $\alpha=(\alpha_1,\alpha_2,\dots,\alpha_d)$, define
$$
\alpha^{\vee}=(1-\alpha_d,1-\alpha_{d-1},\dots,1-\alpha_1).
$$
\begin{lemma}
For a matroid ${\bf X}=(X,\rk)$ we have
$$
{\mathcal G}[{\bf X}^{\vee}]=\sum_{\underline{X}}U_{r(\underline{X})^\vee}
$$
\end{lemma}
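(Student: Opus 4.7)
The plan is to exhibit an involution on the set of maximal chains of $X$ and show that, under this involution, the dual rank sequence of a chain computed in ${\bf X}^\vee$ equals the $\vee$-operation applied to the rank sequence of the original chain in ${\bf X}$. This will match terms in the two sums.

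First I would define, for a maximal chain $\underline{X}: \emptyset = X_0 \subset X_1 \subset \cdots \subset X_d = X$, its \emph{reverse} chain $\underline{X}'$ by
\begin{equation*}
X_i' := X \setminus X_{d-i}, \qquad i = 0, 1, \dots, d.
\end{equation*}
Since $|X_i'| = i$, this is again a maximal chain, and $\underline{X} \mapsto \underline{X}'$ is clearly an involution, hence a bijection on the set of maximal chains in $X$.

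Next I would compute the rank sequence of $\underline{X}'$ in ${\bf X}^\vee$. Using $\rk^\vee(A) = |A| - \rk(X) + \rk(X \setminus A)$ we get
\begin{equation*}
\rk^\vee(X_i') = i - \rk(X) + \rk(X_{d-i}),
\end{equation*}
so the $i$-th entry of $r^\vee(\underline{X}')$ (the rank sequence of $\underline{X}'$ computed with respect to $\rk^\vee$) is
\begin{equation*}
\rk^\vee(X_i') - \rk^\vee(X_{i-1}') = 1 - \bigl(\rk(X_{d-i+1}) - \rk(X_{d-i})\bigr) = 1 - r(\underline{X})_{d-i+1}.
\end{equation*}
Comparing with the definition $\alpha^\vee = (1-\alpha_d, 1-\alpha_{d-1}, \dots, 1-\alpha_1)$, this says exactly $r^\vee(\underline{X}') = r(\underline{X})^\vee$. (Since ${\bf X}$ is a matroid each $r(\underline{X})_i \in \{0,1\}$, so the entries of $r(\underline{X})^\vee$ are again in $\{0,1\}$, consistent with $\underline{X}'$ being a chain in a matroid.)

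Finally, applying the definition of ${\mathcal G}$ to ${\bf X}^\vee$ and summing via the bijection $\underline{X} \leftrightarrow \underline{X}'$,
\begin{equation*}
{\mathcal G}[{\bf X}^\vee] = \sum_{\underline{Y}} U_{r^\vee(\underline{Y})} = \sum_{\underline{X}} U_{r^\vee(\underline{X}')} = \sum_{\underline{X}} U_{r(\underline{X})^\vee},
\end{equation*}
which is the claimed identity. There is no real obstacle here; the only point requiring care is keeping the index reversal in $\underline{X} \mapsto \underline{X}'$ and the index reversal in $\alpha \mapsto \alpha^\vee$ consistent, which the calculation above handles.
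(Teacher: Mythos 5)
Your proof is correct and follows essentially the same route as the paper: define the reversed chain $X_i' = X\setminus X_{d-i}$, compute $\rk^\vee(X_i') = i - \rk(X) + \rk(X_{d-i})$, deduce that successive differences give $1 - r(\underline{X})_{d-i+1}$, and conclude via the resulting bijection on maximal chains. Your write-up is in fact slightly more careful with the index bookkeeping than the paper's.
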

\begin{proof}
For a maximal chain $\underline{X}$, define a chain $\underline{X}^{\vee}$ by $X^\vee_i:=X\setminus X_{d-i}$.
Note that
$$
\rk^{\vee}(X_i)=|X|-\rk(X)+\rk(X_{d-i})
$$
and
$$
\rk^{\vee}(X^{\vee}_i)-\rk^{\vee}(X^{\vee}_{i-1})=1-(\rk(X_{d-i+1})-\rk(X_{d-i})).
$$
If  $\alpha$ runs over all rank sequence for ${\bf X}$, then $\alpha^{\vee}$ runs over all rank sequences for ${\bf X}^\vee$.
\end{proof}
\subsection{${\mathcal G}$ specializes to ${\mathcal F}$}
Let us define another character $\gamma:\QSym\to \Q$ by
$$
\gamma(P_{\alpha})=0
$$
if $\alpha$ is not weakly increasing.
Otherwise, write $\alpha=(\alpha_1^{k_1},\alpha_2^{k_2},\cdots,\alpha_s^{k_s})$
with
$$
\alpha_1<\alpha_2<\cdots<\alpha_s,
$$
and define
$$
\gamma(P_{\alpha})=\frac{1}{k_1!k_2!\cdots k_s!}.
$$

Suppose that $\alpha'=(\alpha_1^{l_1},\cdots,\alpha_s^{l_s})$.
Then 
$$
P_{\alpha}P_{\alpha'}={l_1+k_1\choose k_1}{l_2+k_2\choose k_2}\cdots {l_s+k_s\choose k_s}
P_{\delta}+P'
$$
where $\delta=(\alpha_1^{k_1+l_1},\dots,\alpha_s^{k_s+l_s})$ and
$P'$ is a linear combination of $P_{\delta}$'s where $\delta$ is not weakly increasing.
The binomials appear from the fact there there are ${l_i+k_i\choose k_i}$ ways
to shuffle $\alpha_i^{k_i}$ and $\alpha_i^{l_i}$.
If we apply $\gamma$ we get
$$
\gamma(P_{\alpha}P_{\alpha'})=\gamma(P_{\delta})=
\frac{{l_1+k_1\choose k_1}\cdots {l_s+k_s\choose k_s}}{(l_1+k_1)!\cdots (l_s+k_s)!}=
\frac{1}{k_1!\cdots k_s!}\cdot \frac{1}{l_1!\cdots l_s!}=
\gamma(P_{\alpha})\gamma(P_{\alpha'}).
$$
This shows that $\gamma$ is multiplicative.
Since $(\QSym,\zeta)$ is the terminal object for
the combinatorial Hopf algebras, there is a unique morphism of combinatorial Hopf algebras
$$
\theta:(\QSym,\gamma)\to (\QSym,\zeta).
$$
\begin{theorem}
We have
$$
\theta\circ {\mathcal G}\mid_{\Mat}={\mathcal F},
$$
where ${\mathcal G}\mid_{\Mat}$ is the restriction of ${\mathcal G}$ to $\Mat$.
\end{theorem}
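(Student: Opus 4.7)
The plan is to invoke the universal property of $(\QSym,\zeta_{\QSym})$ as the terminal combinatorial Hopf algebra. Since $\mathcal{F}$ is defined as the unique morphism of combinatorial Hopf algebras $(\Mat,\zeta_{\Mat})\to(\QSym,\zeta_{\QSym})$, it suffices to verify that $\theta\circ\mathcal{G}\mid_{\Mat}$ is such a morphism. The map $\mathcal{G}$ is already a Hopf algebra homomorphism by the Lemma proved earlier, and $\theta$ is a Hopf algebra homomorphism by construction, so $\theta\circ\mathcal{G}\mid_{\Mat}$ is automatically a Hopf algebra homomorphism. The remaining (and only real) task is to check that it preserves characters, i.e.\ $\zeta_{\QSym}\circ\theta\circ\mathcal{G}\mid_{\Mat}=\zeta_{\Mat}$. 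Since $\theta:(\QSym,\gamma)\to(\QSym,\zeta_{\QSym})$ satisfies $\zeta_{\QSym}\circ\theta=\gamma$, this reduces to proving the identity
\[
\gamma(\mathcal{G}[{\bf X}])=\zeta_{\Mat}([{\bf X}])
\]
for every matroid ${\bf X}=(X,\rk)$.

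For a matroid, every increment $\rk(X_i)-\rk(X_{i-1})$ lies in $\{0,1\}$, so each rank sequence $r(\underline{X})$ has entries in $\{0,1\}$ and the corresponding $U_{r(\underline{X})}=P_\alpha$ has $\alpha$ with entries in $\{1,2\}$. By definition of $\gamma$, only those chains whose rank sequence is weakly increasing—that is, all zeros come before all ones—contribute. Let $l$ denote the number of loops of ${\bf X}$ and $d=|X|$. If the rank sequence of $\underline{X}$ begins with $m$ zeros, then $X_m$ has $m$ elements of total rank $0$, so in a matroid $X_m$ must consist entirely of loops; hence $m\le l$. On the other hand, the number of zeros in the full rank sequence equals $d-\rk(X)$, and the general inequality $\rk(X)\le d-l$ gives $d-\rk(X)\ge l$. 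Combining, a valid chain exists if and only if $\rk(X)=d-l$, which is equivalent to saying the non-loop elements form an independent set—equivalently, every non-loop is a coloop and ${\bf X}$ is a direct sum of loops and coloops.

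When ${\bf X}$ does split as $l$ loops and $c$ coloops with $l+c=d$, the valid chains are exactly those that list all loops first (in some order) and then all coloops (in some order); there are $l!\,c!$ of them, each with rank sequence $(0^l,1^c)$, hence each contributing $P_{(1^l,2^c)}$. Applying $\gamma$ gives
\[
\gamma(\mathcal{G}[{\bf X}])=l!\,c!\cdot\gamma(P_{(1^l,2^c)})=l!\,c!\cdot\frac{1}{l!\,c!}=1=\zeta_{\Mat}([{\bf X}]).
\]
When ${\bf X}$ does not split into loops and coloops, no chain has a weakly increasing rank sequence, so $\gamma(\mathcal{G}[{\bf X}])=0=\zeta_{\Mat}([{\bf X}])$. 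This establishes $\gamma\circ\mathcal{G}\mid_{\Mat}=\zeta_{\Mat}$.

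The main (and really only) obstacle is the combinatorial observation that the matroid inequality $\rk(X)\le d-l$ is saturated precisely when the matroid is a direct sum of loops and coloops; once this is seen, the case analysis above and the universal property of $(\QSym,\zeta_{\QSym})$ complete the proof: $\theta\circ\mathcal{G}\mid_{\Mat}$ and $\mathcal{F}$ are both morphisms of combinatorial Hopf algebras $(\Mat,\zeta_{\Mat})\to(\QSym,\zeta_{\QSym})$, so they coincide.
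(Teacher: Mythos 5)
Your proposal is correct and follows essentially the same route as the paper: reduce via the universal property of $(\QSym,\zeta_{\QSym})$ to the identity $\gamma\circ{\mathcal G}\mid_{\Mat}=\zeta_{\Mat}$, observe that only chains with weakly increasing rank sequence $(0^{d-n},1^n)$ contribute, and show such chains exist precisely when the matroid splits into loops and coloops, in which case the $l!\,c!$ contributions of $\gamma(P_{(1^l,2^c)})=1/(l!\,c!)$ sum to $1$. The only (cosmetic) difference is in the splitting step: the paper derives $\rk(A)=|A\cap Z|$ directly from submodularity applied to $Y=X_{d-n}$, whereas you count loops and use saturation of $\rk(X)\le d-l$; both are valid.
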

\begin{proof}
We claim that 
$$
\zeta=\gamma\circ {\mathcal G}\mid_{\Mat}.
$$
Suppose that ${\bf X}=(X,\rk)$ is a matroid with $d:=|X|$ and $n:=\rk(X)\leq d$.
Then
$\gamma({\mathcal G}[{\bf X}])$
is equal to 
$\frac{N}{n!(d-n)!}$,
 where $N$
counts the number of maximal chains
$$
X_0=\emptyset \subset X_1\subset \cdots \subset X_d=X
$$
with 
\begin{equation}\label{eq:cond1}
0=\rk(X_0)=\cdots =\rk(X_{d-n})=0
\end{equation}
and
\begin{equation}\label{eq:cond2}
\rk(X_{d-n+i})=i
\end{equation}
for $i=1,2,\dots,n$.
Let $Y=X_{d-n}$ and $Z=X\setminus Y$. For a subset $A\subseteq X$, we have
$$
\rk(A)\geq \rk(X)-\rk(X\setminus A)
$$
and
$$
\rk(X\setminus A)\leq \rk(Y\setminus A)+\rk(Z\setminus A)=\rk(Z\setminus A)\leq |Z|-|Z\cap A|= n-|Z\cap A|.
$$
It follows that 
$$
\rk(A)\geq \rk(X)-\rk(X\setminus A)=n-(n-|Z\cap A|)=|Z\cap A|.
$$
We also have 
$$
\rk(A)\leq \rk(A\cup Y)\leq \rk(Y)+|A\cup Y|-|Y|=|A\cap Z|
$$
We conclude that
$$
\rk(A)=|A\cap Z|
$$
for all $A\subseteq X$.
This implies that
\begin{equation}\label{eq:split}
(X,\rk)=\underbrace{{\bf 0}\cdot {\bf 0}\cdots {\bf 0}}_{d-n} \cdot \underbrace{{\bf 1}\cdot {\bf 1}\cdots {\bf 1}}_n.
\end{equation}
where ${\bf 0}$ is the loop matroid, and ${\bf 1}$ is the isthmus matroid.
In particular, if $(X,\rk)$ does not split completely, then $\gamma({\mathcal G}[{\bf X}])=0$.

Suppose that ${\bf X}=(X,\rk)$ splits completely as in (\ref{eq:split}).
Without loss of generality, we may assume that $X=\{1,2,\dots,d\}$,
and $\rk(A)=|A\cap Z|$ where $Y=\{1,2,\dots,d-n\}$ and $Z=X\setminus Y$.

A flag 
$$
X_0=\emptyset\subset X_1\subset \cdots \subset X_d=X
$$
satisfies (\ref{eq:cond1}) and (\ref{eq:cond2}) 
if and only if $X_{d-n}=Y$. There are
$(d-n)!$ flags 
$$\emptyset =X_0\subset \cdots \subset X_{d-n}=Y
$$
and $n!$ flags
$$
Y=X_{d-n}\subset X_{d-n+1}\subset \cdots X_{d}=X.
$$
It follows that $N=n!(d-n)!$, and 
$$\gamma({\mathcal G}[{\bf X}])=\frac{N}{n!(d-n)!}=1.
$$
It follows that $\gamma\circ{\mathcal G}\mid_{\Mat}=\zeta=\zeta([{\bf X}])$.
By the uniqueness, we get
$\theta\circ{\mathcal G}\mid_{\Mat}={\mathcal F}$.
\end{proof}
Note that
$$
{\mathcal G}(\Mat)\subseteq \QSym_2
$$
where $\QSym_2$ is the sub-Hopf algebra of $\QSym$ spanned by all $Q_\alpha$'s
where $\alpha$ is a sequences of $0$'s and $1$'s.
The algebra $\QSym_2$ is the graded dual of the Hopf algebra $\Q\langle p_1,p_2\rangle$.
Now $\theta$ restricts to a homomorphism 
$$\theta_2:\QSym_2\to \QSym.
$$
\begin{proposition}
The homomorphism $\theta_2$ is surjective, and the kernel of $\theta_2$
is the principal ideal generated by $P_{(2)}-P_{(1)}=U_{(1)}-U_{(0)}$.
\end{proposition}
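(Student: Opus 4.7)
The plan is to show $(P_{(2)}-P_{(1)}) \subseteq \ker\theta_2$ first, then exploit a Hilbert series match under the length grading on $\QSym_2$ to reduce the remaining assertions to a surjectivity verification.

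For the containment, the theorem $\theta\circ\mathcal{G}\mid_{\Mat}=\mathcal{F}$ applied to the one-element matroids yields $\theta(P_{(1)})=\mathcal{F}[{\bf 0}]$ and $\theta(P_{(2)})=\mathcal{F}[{\bf 1}]$, both of which equal $M_{(1)}$ by a direct ABS computation on $\Mat_1$ (consistent with the paper's earlier remark that $\mathcal{F}$ cannot distinguish a loop from an isthmus, since $\zeta_{\Mat}({\bf 0})=\zeta_{\Mat}({\bf 1})=1$). Hence $\theta_2(P_{(2)}-P_{(1)})=0$, and since $\theta_2$ is a ring homomorphism the entire principal ideal sits inside $\ker\theta_2$.

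Next I would equip $\QSym_2$ with the \emph{length} grading, declaring $P_\alpha$ to have degree $\ell(\alpha)$, and check by induction on $\ell(\alpha)$ that $\theta_2$ carries $(\QSym_2)_n^{\mathrm{len}}$ into the standard degree-$n$ piece $\QSym_n$. The base case $\ell(\alpha)=1$ is the computation above; in the inductive step, the coalgebra map condition forces the ``middle'' terms of $(\theta_2\otimes\theta_2)\Delta(P_\alpha)$, which by hypothesis lie in $\bigoplus_{a+b=\ell(\alpha),\,a,b\geq 1}\QSym_a\otimes\QSym_b$, to pin down $\theta_2(P_\alpha)\in\QSym_{\ell(\alpha)}$. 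Since $\QSym_2$ sits inside the polynomial algebra $\QSym\otimes_{\Z}\Q$ it is a commutative integral domain, so multiplication by the nonzero length-one element $P_{(2)}-P_{(1)}$ is injective, giving $\dim(P_{(2)}-P_{(1)})_n^{\mathrm{len}}=2^{n-1}$ and $\dim(\QSym_2/(P_{(2)}-P_{(1)}))_n^{\mathrm{len}}=2^n-2^{n-1}=2^{n-1}=\dim\QSym_n$. The induced morphism $\bar\theta_2:\QSym_2/(P_{(2)}-P_{(1)})\to\QSym$ thus has matching graded Hilbert series and will be an isomorphism as soon as it is surjective.

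To close the argument I would verify surjectivity. Since $\mathcal{F}(\Mat)\subseteq\im(\theta_2)$, it suffices to show that the $\mathcal{F}$-invariants span $\QSym_n$ in each degree; one exhibits enough matroids explicitly---the free matroids ${\bf 0}^n$ realize the full $M_\alpha$-expansion of $M_{(1)}^n$, the uniform matroids $U_{k,n}$ supply complementary independent combinations, and direct sums with loops and isthmuses fill in the remaining compositions. This surjectivity check in arbitrary degree, which can be organized as an inductive triangularity argument on compositions, is the step where I expect the real work of the proof to concentrate.
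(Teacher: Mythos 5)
Your proposal reproduces the paper's argument: the same length grading on $\QSym_2$, the same Hilbert series comparison ($\tfrac{1}{1-2t}$ for $\QSym_2$ versus $\tfrac{1-t}{1-2t}$ for $\QSym$), the same use of $P_{(2)}-P_{(1)}$ being a nonzerodivisor to compute the Hilbert series of the principal ideal, and the same reduction of surjectivity of $\theta_2$ to surjectivity of ${\mathcal F}$. You also usefully fill in two details the paper leaves implicit: why $P_{(2)}-P_{(1)}\in\ker\theta_2$ (via $\theta(P_{(1)})={\mathcal F}[{\bf 0}]={\mathcal F}[{\bf 1}]=\theta(P_{(2)})=M_{(1)}$), and why $P_{(2)}-P_{(1)}$ is a nonzerodivisor ($\QSym_2$ is a subring of a polynomial ring, hence a domain).

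Two steps do not fully close as written. First, your inductive proof that $\theta_2$ maps the length-degree-$n$ part into $\QSym_n$ (which is genuinely needed for the Hilbert series comparison, and which the paper assumes silently) has a hole: the identity $(\theta_2\otimes\theta_2)\Delta(P_\alpha)=\Delta(\theta_2(P_\alpha))$ together with the inductive hypothesis only shows that the components of $\theta_2(P_\alpha)$ in degrees $m\neq\ell(\alpha)$ have vanishing reduced coproduct, i.e., are primitive. Since $\QSym$ has a nonzero (one-dimensional) space of primitives in every positive degree, spanned by $M_{(m)}$, this does not force those components to vanish. The clean repair is to invoke the Aguiar--Bergeron--Sottile construction directly: the universal morphism from a combinatorial Hopf algebra sends a homogeneous element of degree $n$ to $\sum_{\alpha\models n}\zeta_\alpha(h)M_\alpha\in\QSym_n$, so applying it to $(\QSym_2,\gamma)$ with the length grading gives gradedness by construction. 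Second, the surjectivity of $\theta_2$: the paper simply cites the surjectivity of ${\mathcal F}$, which is a theorem of Billera--Jia--Reiner, whereas you propose to reprove it by exhibiting matroids whose ${\mathcal F}$-images span each $\QSym_n$ and explicitly defer ``the real work'' of that triangularity argument. As written this step is incomplete; either carry out the spanning argument or, as the paper does, quote the BJR result.
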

\begin{proof}
The surjectivity follows from the fact that ${\mathcal F}$ is surjective.
We choose the grading on $\QSym_2$ where 
$P_{\alpha}$ has degree $\ell(\alpha)$.
There are $2^d$ basis elements $P_{\alpha}$ of degree $d$. 
So the Hilbert series of the $\QSym_2$ is
$$
1+2t+2^2t^2+\cdots =\frac{1}{1-2t}.
$$
Note that $\QSym_2$ is not finitely generated as a commutative algebra.

On $\QSym$, we choose the grading where $P_{\alpha}$ has degree $|\alpha|$.
There is one basis element of degree $0$, namely $P_{()}$ and for $d>0$
there are $2^{d-1}$ basis elements of degree, because there
are $2^{d-1}$ decompositions of $d$.
So the Hilbert series of $\QSym$ with this grading is
$$
1+t+2t^2+2^2t^3+\cdots 1+\frac{t}{1-2t}=\frac{1-t}{1-2t}.
$$
Therefore, the Hilbert series of the kernel of $\theta_2$ is
$$
\frac{1}{1-2t}-\frac{1-t}{1-2t}=\frac{t}{1-2t}.
$$
The kernel contains the principal ideal $(P_{(2)}-P_{(1)})$.
It is not hard to see that $P_{(2)}-P_{(1)}$ is not a zero divisor, so
the Hilbert series of the principal ideal is $\frac{t}{1-2t}$.
Since this is equal to the Hilbert series of the kernel of $\theta_2$
we must have
$$
\ker\theta_2=(P_{(2)}-P_{(1)}).
$$
\end{proof}
\subsection{${\mathcal G}$ specializes to ${\mathcal H}$}
\begin{theorem}
There exists a homomorphism $\tau:\QSym\to Sym[q,t]$ of commutative
algebras such that $\tau({\mathcal G}[{\bf X}])={\mathcal H}[{\bf X}]$
for every polymatroid ${\bf X}$.
\end{theorem}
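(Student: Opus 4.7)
The plan is to define $\tau$ on a basis of $\QSym$, verify multiplicativity, and then check the identity $\tau({\mathcal G}[{\bf X}]) = {\mathcal H}[{\bf X}]$ for all polymatroids.

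First, I recall that $\QSym$ over $\Q$ is a polynomial algebra (freely generated as a commutative algebra by elements indexed by Lyndon compositions); equivalently, the $\{P_\alpha\}$ form a basis and $P_\alpha P_\beta = \sum_\gamma P_\gamma$ where $\gamma$ ranges over shuffles of $\alpha$ and $\beta$. An algebra homomorphism $\tau: \QSym \to \Sym[q,t]$ is therefore determined by specifying its values on any set of polynomial generators, or equivalently on the $\{U_r\} = \{P_{r + \vec{1}}\}$-basis subject to compatibility with shuffles.

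Next, I determine $\tau$ as follows. Matching single-element polymatroids ${\bf a}$ of rank $a$ yields ${\mathcal G}[{\bf a}] = U_{(a)} = P_{(a+1)}$ and ${\mathcal H}[{\bf a}] = 1 + q^a t$, forcing $\tau(U_{(a)}) = 1 + q^a t$. For a general composition $r = (a_1,\ldots,a_d)$, I expect an explicit formula for $\tau(U_r) \in \Sym[q,t]$ involving the Schur functions $s_\lambda$ and powers of $q$, $t$. This formula must simultaneously be consistent with the shuffle product (so that $\tau$ is well-defined as an algebra map) and encode the correction terms in the recursion for ${\mathcal P}[{\bf X}]$ (Definition~\ref{def3}).

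Finally, I verify $\tau({\mathcal G}[{\bf X}]) = {\mathcal H}[{\bf X}]$ by induction on $|X|$. The base case $|X| \le 1$ is handled directly. For the inductive step, I combine the Hopf-algebra homomorphism property of ${\mathcal G}$ (giving $\Delta {\mathcal G}[{\bf X}] = \sum_A {\mathcal G}[{\bf X}\mid_A] \otimes {\mathcal G}[{\bf X}/A]$), the multiplicativity of ${\mathcal H}$ (Proposition~\ref{prop:mult}), Remark~\ref{rem5} on the vanishing of ${\mathcal H}[{\bf X}](\sigma^{-1},-1)$ below degree $|X|$, and the recursive expression of ${\mathcal P}[{\bf X}]$ in terms of ${\mathcal P}[{\bf X}\mid_A]$ for proper subsets $A$.

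The hardest part is constructing the explicit formula for $\tau(U_r)$ and simultaneously checking its compatibility with shuffles and with the target identity. Since $\tau(U_r)$ depends only on the rank-jump sequence $r$, while ${\mathcal H}[{\bf X}]$ genuinely involves the polymatroid-specific quantities ${\mathcal P}[{\bf X}\mid_A]$, the identity $\sum_{\underline{X}} \tau(U_{r(\underline{X})}) = {\mathcal H}[{\bf X}]$ must hold through delicate cancellations both among chains sharing a common rank sequence and among chains with different rank sequences; carrying out this combinatorial bookkeeping---grouping chains by their prefixes $X_k = A$ (there are $|A|!\,(d-|A|)!$ such chains per subset $A$) and reconciling the resulting double sum with the recursion for ${\mathcal P}$---is the heart of the argument.
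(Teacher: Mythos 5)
Your outline correctly identifies the shape of the argument and the locus of difficulty, but it stops exactly where the proof has to begin: you never actually construct $\tau$. Saying ``I expect an explicit formula for $\tau(U_r)$ involving Schur functions'' is the statement of the problem, not its solution, and the subsequent inductive verification cannot even be set up without that formula. The missing idea is a \emph{chain-level analogue of} ${\mathcal P}[{\bf X}]$: for a vector $\alpha=(\alpha_1,\dots,\alpha_d)$ of nonnegative integers one defines ${\mathcal P}(\alpha)\in\Sym$ recursively as the unique symmetric function of degree $<d$ such that
\[
\sum_{i=0}^{d}\binom{d}{i}\,{\mathcal P}(\alpha_1,\dots,\alpha_i)\,(-1)^i\,\sigma^{-(\alpha_1+\cdots+\alpha_i)}
\]
vanishes in degree $<d$ --- i.e.\ one runs Definition~\ref{def3} along a single maximal chain, with the sum over subsets $A$ of size $i$ replaced by the binomial weight $\binom{d}{i}$. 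One then proves the key lemma ${\mathcal P}[{\bf X}]=\frac{1}{d!}\sum_{\underline{X}}{\mathcal P}(r(\underline{X}))$ (by checking that the averaged quantity satisfies the same degree-$<d$ vanishing that characterizes ${\mathcal P}[{\bf X}]$, using the fact that each maximal chain in a subset $A$ with $|A|=i$ extends to $(d-i)!$ maximal chains in $X$), and finally sets
\[
\tau(U_\alpha)=\sum_{i=0}^{d}\frac{1}{i!\,(d-i)!}\,{\mathcal P}(\alpha_1,\dots,\alpha_i)\,q^{\alpha_1+\cdots+\alpha_i}\,t^{i},
\]
after which $\tau({\mathcal G}[{\bf X}])={\mathcal H}[{\bf X}](q,t)$ follows from the same chain-extension count. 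This resolves your worry about ``cancellations among chains with different rank sequences'': no such cancellation is needed, because ${\mathcal P}[{\bf X}\mid_A]$ is literally the average of ${\mathcal P}(r(\underline{A}))$ over maximal chains in $A$.

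Two further points. First, your assertion that the one-element case \emph{forces} $\tau(U_{(a)})=1+q^at$ only pins down $\tau$ on rank sequences realized by polymatroids; to get a map on all of $\QSym$ you must define $\tau(U_\alpha)$ for every composition, which is what the formula above does. Second, your plan puts ``verify multiplicativity'' (compatibility with the shuffle product) on the critical path before the main identity; this is an additional nontrivial verification that your proposal does not carry out either, so as it stands neither half of the argument is complete.
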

\begin{proof}
We will inductively define a symmetric function ${\mathcal P}(\alpha)$ for
any vector $\alpha=(\alpha_1,\dots,\alpha_d)$ of nonnegative integers
as follows. We define ${\mathcal P}()=1$. Then ${\mathcal P}(\alpha_1,\dots,\alpha_d)$ is the unique symmetric function of
degree $<d$ such that
\begin{equation}\label{eq:vanishes}
\sum_{i=0}^d {d\choose i}{\mathcal P}(\alpha_1,\dots,\alpha_i)(-1)^i\sigma^{-\alpha_1-\cdots-\alpha_i}
\end{equation}
vanishes in degree $<d$.
For a vector $\alpha=(\alpha_1,\dots,\alpha_d)$ and $i<d$,
let $\alpha^{[i]}=(\alpha_1,\dots,\alpha_i)$ be the truncated vector.
So (\ref{eq:vanishes}) becomes
 \begin{equation}\label{eq:vanishes2}
\sum_{i=0}^d {d\choose i}{\mathcal P}(\alpha^{[i]})(-1)^i\sigma^{-|\alpha^{[i]}|}.
\end{equation}

Define
\begin{equation}\label{eq:Ptilde}
\widetilde{\mathcal P}[{\bf X}]=\frac{1}{d!}\sum_{\underline{X}}{\mathcal P}(r(\underline{X}))
\end{equation}
for every polymatroid ${\bf X}=(X,\rk)$ such that $d=|X|$.
Here $\underline{X}$ runs over all maximal chains in $X$.

We claim that ${\mathcal P}[{\bf X}]=\widetilde{\mathcal P}[{\bf X}]$.
The claim is clearly true when $|X|=0$ or $|X|=1$.
Note that $\widetilde{\mathcal P}[{\bf X}]$ is a symmetric
polynomial of degree $<d=|X|$.
To prove the claim it suffices to show that
\begin{equation}\label{eq:Ptilde}
\sum_{A\subseteq X}\widetilde{\mathcal P}[{\bf X}\mid_A](-1)^{|A|}\sigma^{-\rk(A)}
\end{equation}
vanishes in degree $<d$.
The symmetric polynomial (\ref{eq:Ptilde}) is equal to
\begin{equation}\label{eq:underlineA}
\sum_{i=0}^d\sum_{A\subseteq X\atop |A|=i}
\frac{1}{i!}\sum_{\underline{A}}{\mathcal P}(r(\underline{A}))(-1)^i
\sigma^{-\rk(A)}
\end{equation}
where $\underline{A}$ runs over all maximal chains in $A$. 
Every such chain $\underline{A}$ can be extended to $(d-i)!$
maximal chains in $X$. Therefore, (\ref{eq:underlineA}) is equal
to
\begin{multline}
\sum_{i=0}^d
\frac{1}{i!(d-i)!}\sum_{\underline{X}}{\mathcal P}(r(\underline{X})^{[i]})(-1)^i
\sigma^{-|r(\underline{X})^{[i]}|}=\\
\frac{1}{d!}\sum_{\underline{X}}\sum_{i=0}^d
{d\choose i}{\mathcal P}(r(\underline{X})^{[i]})(-1)^i
\sigma^{-|r(\underline{X})^{[i]}|}
\end{multline}
which vanishes in degree $<d$.

For a vector $\alpha=(\alpha_1,\dots,\alpha_d)$,
define
$$
\tau(U_{\alpha})=\sum_{i=0}^d \frac{1}{i!(d-i)!}
{\mathcal P}(\alpha^{[i]})q^{|\alpha^{[i]}|}t^i.
$$
If ${\bf X}=(X,\rk)$ is a polymatroid with $|X|=d$, then
we have
\begin{equation}\label{eq:tauG}
\tau({\mathcal G}[{\bf X}])=
\tau(\sum_{\underline{X}}U_{r(\underline{X})})=
\sum_{\underline{X}}\tau(U_{r(\underline{X})})=
\sum_{\underline{X}}
\sum_{i=0}^d\frac{1}{i!(d-i)!}{\mathcal P}(r(\underline{X})^{[i]})q^{|r(\underline{X})^{[i]}|}t^i.
\end{equation}
For every subset $A\subseteq X$ with $|A|=i$, 
and every maximal chain $\overline{A}$ in $A$ there
are exactly $(d-i)!$ maximal chains $\overline{X}$ in $X$
extending $\overline{A}$. Therefore, (\ref{eq:tauG}) is equal to
$$
\sum_{i=0}^d\sum_{A\subseteq X;|A|=i}\frac{1}{i!}\sum_{\underline{A}}
{\mathcal P}(r(\underline{A}))q^{\rk(A)}t^{|A|}=
\sum_{A\subseteq X}{\mathcal P}[{\bf X}\mid_A]q^{\rk(A)}t^{|A|}=
{\mathcal H}[{\bf X}](q,t).
$$

\end{proof}

\begin{corollary}
The quasi-symmetric function ${\mathcal F}[{\bf X}]$ specializes to ${\mathcal P}[{\bf X}]$
for matroids ${\bf X}$.
\end{corollary}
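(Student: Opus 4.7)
The plan is to construct a $\Q$-linear map $\Psi\colon\QSym\to\Sym$ with $\Psi(\mathcal{F}[\mathbf{X}])=\mathcal{P}[\mathbf{X}]$ for every matroid $\mathbf{X}$. From the proof of the preceding theorem we already have the linear map $\phi\colon\QSym_2\to\Sym$ defined by $\phi(U_\alpha)=\mathcal{P}(\alpha)/\ell(\alpha)!$, which satisfies $\phi\circ\mathcal{G}=\mathcal{P}$ on all polymatroids (this is the identity $\widetilde{\mathcal{P}}=\mathcal{P}$ extracted from that proof). Since $\mathcal{F}=\theta_2\circ\mathcal{G}|_{\Mat}$ and the preceding proposition identifies $\theta_2$ as a surjection with kernel the principal ideal $(U_{(1)}-U_{(0)})\subset\QSym_2$, it suffices to check that $\phi$ vanishes on this ideal; then $\phi$ descends to the desired $\Psi$, and for every matroid $\mathbf{X}$ one obtains $\Psi(\mathcal{F}[\mathbf{X}])=\phi(\mathcal{G}[\mathbf{X}])=\mathcal{P}[\mathbf{X}]$.

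The key computation combines two multiplicativity facts. First, multiplicativity of $\mathcal{G}$ yields $\mathcal{G}[\mathbf{X}]\cdot U_{(k)}=\mathcal{G}[\mathbf{X}\oplus\mathbf{k}]$ for $k\in\{0,1\}$. Second, applying $\phi$ and invoking multiplicativity of $\mathcal{P}$ together with $\mathcal{P}[\mathbf{0}]=\mathcal{P}[\mathbf{1}]=1$ gives
\begin{equation*}
\phi\bigl(\mathcal{G}[\mathbf{X}]\cdot(U_{(1)}-U_{(0)})\bigr)=\mathcal{P}[\mathbf{X}\oplus\mathbf{1}]-\mathcal{P}[\mathbf{X}\oplus\mathbf{0}]=\mathcal{P}[\mathbf{X}]\cdot(1-1)=0
\end{equation*}
for every matroid $\mathbf{X}$. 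Hence $\phi$ already annihilates every element of the form $\mathcal{G}[\mathbf{X}]\cdot(U_{(1)}-U_{(0)})$.

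The main obstacle is extending this vanishing from $\mathcal{G}(\Mat)\cdot(U_{(1)}-U_{(0)})$ to the whole ideal $(U_{(1)}-U_{(0)})\cdot\QSym_2$. This reduces to a spanning lemma: $\mathcal{G}(\Mat)$ spans $\QSym_2$ as a $\Q$-vector space. Granted spanning, any $f\in\QSym_2$ is a combination $\sum c_i\mathcal{G}[\mathbf{X}_i]$, and linearity of $\phi$ then gives $\phi(f\cdot(U_{(1)}-U_{(0)}))=0$. Direct enumeration verifies the lemma through degree three (e.g.\ the four matroids $\mathbf{0}^2,\mathbf{0}\oplus\mathbf{1},\mathbf{1}^2,U_{1,2}$ yield $\mathcal{G}$-values spanning $\QSym_{2,2}$; the eight matroids $\mathbf{0}^3,\mathbf{0}^2\oplus\mathbf{1},\mathbf{0}\oplus\mathbf{1}^2,\mathbf{1}^3,\mathbf{0}\oplus U_{1,2},\mathbf{1}\oplus U_{1,2},U_{1,3},U_{2,3}$ span $\QSym_{2,3}$). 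In arbitrary degree $d$, I would proceed by induction, producing for each $\beta\in\{0,1\}^d$ a matroid whose rank-sequence vector contributes $U_\beta$ modulo matroids of smaller ``complexity,'' exploiting the rich combinatorics of matroids on $d$ elements (direct sums of loops, isthmuses, parallel classes, and uniform matroids). This combinatorial spanning result is the main technical content of the argument; once in hand, the factorization of $\phi$ through $\theta_2$ is immediate and completes the proof.
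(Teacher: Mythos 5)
Your map $\phi(U_\alpha)=\mathcal P(\alpha)/\ell(\alpha)!$ is exactly the specialization the paper uses (it is the paper's $\xi$, obtained from $\tau$ by setting $q=1$ and extracting the coefficient of $t^{\ell(\alpha)}$), and your identities $\phi\circ\mathcal G=\mathcal P$ and $\phi\bigl(\mathcal G[{\bf X}]\cdot(U_{(1)}-U_{(0)})\bigr)=0$ are correct. The gap is in how you pass from killing $\mathcal G(\Mat)\cdot(U_{(1)}-U_{(0)})$ to killing the whole ideal $(U_{(1)}-U_{(0)})\subset\QSym_2$. You reduce this to the claim that $\mathcal G(\Mat)$ spans $\QSym_2$, verify it only through degree $3$, and then only sketch an induction (``producing for each $\beta$ a matroid \dots modulo matroids of smaller complexity''). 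As you yourself note, this spanning statement is the main technical content of your argument, and it is not proved; it is a nontrivial combinatorial fact (essentially the surjectivity of $\mathcal G$ onto $\QSym_2$), and nothing in the present paper supplies it. As written, the proof is incomplete.

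The missing idea that makes the spanning lemma unnecessary is that $\phi$ is an \emph{algebra homomorphism}, not merely a linear map. Indeed $\tau$ is a homomorphism, $\deg_t\tau(U_\alpha)\le\ell(\alpha)$, and a product $U_\alpha U_\beta$ is a sum of $U_\gamma$ with $\ell(\gamma)=\ell(\alpha)+\ell(\beta)$, so extracting the top $t$-coefficient after setting $q=1$ respects products. Once $\phi$ is multiplicative, $\phi(U_{(1)}-U_{(0)})=\mathcal P(1)-\mathcal P(0)=1-1=0$ forces $\phi$ to vanish on the entire principal ideal $(U_{(1)}-U_{(0)})=\ker\theta_2$, so $\phi$ factors as $\eta\circ\theta_2$ and $\eta(\mathcal F[{\bf X}])=\phi(\mathcal G[{\bf X}])=\mathcal P[{\bf X}]$. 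This is the paper's route; it also yields the stronger conclusion that the specialization $\QSym\to\Sym$ is a ring homomorphism. If you prefer to keep your route, you must actually prove the spanning lemma in all degrees.
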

\begin{proof}
We define $\xi:\QSym_2\to \Sym$ by
$$
\xi(Q_{\alpha})=t^{\ell(\alpha)}\tau(Q_{\alpha})(1,t^{-1})\mid_{t=0}.
$$
One easily verifies that $\xi$ is a homomorphism of algebras,
and 
$$\xi({\mathcal G}[{\bf X}])={\mathcal H}[{\bf X}](1,t^{-1})t^{|X|}\mid_{t=0}={\mathcal P}[{\bf X}].
$$
for every matroid ${\bf X}=(X,\rk)$. 
Since $Q_{(1)}-Q_{(0)}$ lies in the kernel of $\xi$, 
$\xi$ factors through $\theta:\QSym_2\to \QSym\cong \QSym_2/(Q_{(1)}-Q_{(0)})$,
say $\xi=\eta\circ \theta$. Then we have
$$
{\mathcal P}[{\bf X}]=\xi({\mathcal G}[{\bf X}])=\eta(\theta({\mathcal G}[{\bf X})]))=
\eta({\mathcal F}[{\bf X}]).
$$
\end{proof}

\subsection{Speyer's invariant}
For a matroid ${\bf X}$ 
David Speyer defined an interesting polynomial $g_{\bf X}(t)$.
It has the multiplicative property ($g_{{\bf X_1}\oplus {\bf X_2}}(t)=g_{\bf X_1}(t)g_{\bf X_2}(t)$),
it is invariant under matroid-duality and has various other nice properties.
\begin{conjecture}\label{conjSpeyer}
The invariant ${\mathcal G}$ specializes to Speyer's invariant.
\end{conjecture}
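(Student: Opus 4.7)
The plan is to construct a $\Q$-algebra homomorphism $\phi:\QSym\to\Q[t]$ such that $\phi\circ{\mathcal G}[{\bf X}]=g_{\bf X}(t)$ for every matroid ${\bf X}$. Since ${\mathcal G}(\Mat)\subseteq \QSym_2$, it suffices to define $\phi$ on the sub-Hopf algebra $\QSym_2$, and by the terminal property of $(\QSym,\zeta_{\QSym})$ in the category of combinatorial Hopf algebras (Section~\ref{sec:7.4}), the map $\phi$ is determined once a character $\delta:\QSym_2\to\Q[t]$ is specified, provided Speyer's invariant arises as the unique combinatorial-Hopf-algebra morphism out of $(\Mat,\zeta_{\Mat})$ equipped with a suitable character. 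My first step is therefore to identify the correct character: evaluate both $\gamma\circ{\mathcal G}$ and $g_{\bf X}(t)$ on the loop and coloop, on small uniform matroids $U_{k,n}$, and on a graphical matroid or two, and solve for the values $\delta(U_{(0)})$ and $\delta(U_{(1)})$ (and, if necessary, $\delta$ on length-$2$ sequences) that make these match.

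Next I would use multiplicativity. Both ${\mathcal G}$ and $g_{\bf X}$ are multiplicative on direct sums: ${\mathcal G}[{\bf X}\oplus{\bf Y}]={\mathcal G}[{\bf X}]\cdot{\mathcal G}[{\bf Y}]$ by the shuffling description of rank sequences, and $g_{{\bf X}\oplus{\bf Y}}=g_{\bf X}\cdot g_{\bf Y}$ by definition. Hence, once the candidate $\phi$ is an algebra map, the identity $\phi\circ{\mathcal G}=g$ needs only to be checked on matroids generating the algebra $\Mat$; and the matroid-duality relation $g_{{\bf X}^\vee}=g_{\bf X}$ translates, through the formula ${\mathcal G}[{\bf X}^\vee]=\sum_{\underline{X}}U_{r(\underline{X})^\vee}$, into the symmetry constraint $\phi(U_{\alpha^\vee})=\phi(U_\alpha)$. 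This is an extra test the character $\delta$ must satisfy, and it should sharply constrain the answer.

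An alternative and perhaps cleaner route uses valuativity. The paper asserts (Section~\ref{sec:8}) that ${\mathcal G}$ is valuative with respect to polymatroid polytope decompositions, while Speyer proved the same for $g_{\bf X}$. Thus both invariants factor through the (finite-dimensional in each rank) quotient of the free abelian group on isomorphism classes by the valuative relations. On such a finite-dimensional space one can exhibit a basis of matroids (e.g.\ Schubert matroids, as in Derksen--Fink's later work), verify $\phi\circ{\mathcal G}=g$ on that basis, and conclude the identity on all matroids. This reduces the infinite conjecture to a combinatorial check at each $(n,d)$.

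The main obstacle I expect is producing an explicit closed formula for the values $\phi(U_\alpha)\in\Q[t]$. Speyer's invariant is defined via $K$-theoretic intersection numbers on a Grassmannian, and while the rank sequence $r(\underline{X})\in\{0,1\}^d$ is naturally a binary word labeling a Schubert cell, translating this into a polynomial in $t$ compatible with both shuffles and the reversal involution is nontrivial; getting the right normalization (so that products of ${\bf 0}$'s and ${\bf 1}$'s specialize to $1$, matching $g_{\bf 0}=g_{\bf 1}=1$) and the correct $t$-degree (namely $|X|-\rk(X)-1$ for connected matroids) are the two delicate points. If such a $\phi$ can be written down explicitly, the remaining verification should follow from the universal characterizations already invoked.
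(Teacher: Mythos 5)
This statement is left as an open conjecture in the paper (Conjecture~\ref{conjSpeyer}); the paper offers no proof, and your text is a research plan rather than a proof, so there is nothing to compare it against except to assess whether the plan closes the gap. It does not. The central object --- the algebra homomorphism $\phi:\QSym\to\Q[t]$ (equivalently, the values $\phi(U_\alpha)$) --- is never constructed; you yourself flag this as ``the main obstacle.'' Moreover, the mechanism you invoke to produce $\phi$ does not apply as stated: the terminal-object property of $(\QSym,\zeta_{\QSym})$ produces Hopf-algebra morphisms \emph{into} $\QSym$ from a combinatorial Hopf algebra equipped with a character, not algebra maps $\QSym\to\Q[t]$; and your proviso that Speyer's $g_{\bf X}(t)$ ``arises as the unique combinatorial-Hopf-algebra morphism out of $(\Mat,\zeta_{\Mat})$ equipped with a suitable character'' is itself an unproven assumption --- indeed $g$ takes values in $\Q[t]$, not $\QSym$, so it is not literally such a morphism, and no character $\delta$ is exhibited or shown to exist.

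The valuativity route has the same defect in a different guise. Knowing that both ${\mathcal G}$ and $g$ are valuative shows that both factor through the valuative quotient of $\Mat$; it does \emph{not} show that $g$ factors through ${\mathcal G}$. For that you need the kernel of ${\mathcal G}$ on the valuative quotient to be contained in the kernel of $g$, which is exactly the universality statement of Conjecture~\ref{conjUniversal} --- also open in this paper. So this branch of your plan reduces one open conjecture to another. (Historically this is essentially how the question was later settled, via the Derksen--Fink universality theorem, but within the four corners of this paper neither ingredient is available.) Finally, ``verify $\phi\circ{\mathcal G}=g$ on a basis at each $(n,d)$'' is an infinite family of checks, not a finite reduction, and multiplicativity does not rescue this since $\Mat$ is not finitely generated as an algebra. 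In short: a reasonable program, but every load-bearing step is either missing or rests on an unproved assertion.
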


\section{Polymatroid base polytopes}\label{sec:8}
\subsection{The valuative property of ${\mathcal G}$}
We will denote $\{1,2,\dots,n\}$ by $\underline{n}$.
For a polymatroid ${\bf X}=(\underline{n},\rk)$ we define its base polytope 
$\Poly(\rk)=\Poly_X(\rk)\subset\R^{n}$ by
$$
\textstyle \Poly(\rk)=\{v\in \R^n\mid
\sum^n_{i=1}v_i=\rk(\underline{n})\mbox{ and }
\forall A\subseteq \underline{n},\ \sum_{i\in A}v_i \leq \rk(A)\}.
$$
The $i$-th basis vector is denoted by $e_i$.
\begin{theorem}[see~\cite{HeHi}]
A compact convex polytope in $\R^n$ is the base polytope of a polymatroid if and only if
every vertice of the polytope has nonnegative integer coordinates, and every edge
is parallel to $e_j-e_k$ for some $j\neq k$.
\end{theorem}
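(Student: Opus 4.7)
My plan is to prove the two directions separately, with the reverse direction being the substantial one. Throughout, write $\ell_A(v) := \sum_{i \in A} v_i$ for $A \subseteq \underline{n}$.

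\emph{Necessity.} Assume $P = \Poly(\rk)$ for a polymatroid $\rk$. I would apply Edmonds' greedy algorithm: for each linear order $\sigma$ on $\underline{n}$, define $v^\sigma \in \R^n$ by
\[
v^\sigma_{\sigma(j)} = \rk(\{\sigma(1), \ldots, \sigma(j)\}) - \rk(\{\sigma(1), \ldots, \sigma(j-1)\}).
\]
Monotonicity of $\rk$ forces $v^\sigma \geq 0$, and its coordinates are nonnegative integers. Standard LP duality shows $v^\sigma$ is a vertex of $P$ (it maximizes any linear functional whose coefficients induce the order $\sigma$), and every vertex of $P$ has this form. For the edge condition, it suffices to observe that if $\tau$ differs from $\sigma$ by swapping two consecutive positions $j, j+1$ with $a = \sigma(j)$, $b = \sigma(j+1)$, a direct computation using submodularity gives $v^\tau - v^\sigma = c(e_b - e_a)$ with $c \in \N$, and all edges of $P$ arise from such adjacent-transposition pairs.

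\emph{Sufficiency.} Suppose $P$ satisfies the two conditions. I would define
\[
\rk(A) := \max_{v \in P}\ell_A(v),
\]
which is an integer (the maximum is attained at a vertex) and nonnegative (since vertices of $P$ are nonnegative). Clearly $\rk(\emptyset) = 0$, and monotonicity follows since $A \subseteq B$ implies $\ell_A(v) \leq \ell_B(v)$ for all $v \in P$. The identity $\ell_A + \ell_B = \ell_{A \cup B} + \ell_{A \cap B}$ of linear functionals reduces submodularity $\rk(A \cup B) + \rk(A \cap B) \leq \rk(A) + \rk(B)$ to the following joint maximization lemma: there exists $v^* \in P$ with $\ell_{A \cup B}(v^*) = \rk(A \cup B)$ and $\ell_{A \cap B}(v^*) = \rk(A \cap B)$ simultaneously. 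Once such a $v^*$ is found,
\[
\rk(A \cup B) + \rk(A \cap B) = \ell_A(v^*) + \ell_B(v^*) \leq \rk(A) + \rk(B).
\]
The final equality $P = \Poly(\rk)$ follows by observing that $\ell_{\underline{n}}$ is constant on $P$ (because all edge directions $e_j - e_k$ are orthogonal to $\mathbf{1}$), so $P$ lies in the affine hyperplane $\{\ell_{\underline{n}} = \rk(\underline{n})\}$, and by construction each facet-defining inequality $\ell_A(v) \leq \rk(A)$ of $\Poly(\rk)$ is tight on $P$; combined with nonnegativity this forces equality.

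\emph{Main obstacle.} The hard step is the joint maximization lemma, and this is where the edge condition is essential. My approach is to consider the face $F$ of $P$ maximizing $\ell_{A \cup B}$; since $F$ is a face of $P$, its own edges remain parallel to vectors $e_j - e_k$. Let $v^* \in F$ be a vertex maximizing $\ell_{A \cap B}$ over $F$. Suppose for contradiction that $\ell_{A \cap B}(v^*) < \rk(A \cap B)$; then some vertex $w \in P$ achieves a strictly larger value. Walking along a path of edges from $v^*$ to $w$ in $P$, one tracks the pair $(\ell_{A \cup B}, \ell_{A \cap B})$; each edge step in direction $e_j - e_k$ changes this pair by $\bigl([j \in A \cup B] - [k \in A \cup B],\ [j \in A \cap B] - [k \in A \cap B]\bigr)$. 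A case analysis on whether $j$ and $k$ lie in $A \cap B$, in the symmetric difference $A \triangle B$, or outside $A \cup B$, together with the constraint that $\ell_{A \cup B}$ cannot exceed its value at $v^*$, produces an edge at $v^*$ staying inside $F$ and strictly increasing $\ell_{A \cap B}$, contradicting the choice of $v^*$. This combinatorial case analysis is the most delicate portion and fully exploits the restriction on edge directions.
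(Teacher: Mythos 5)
The paper does not prove this statement --- it is quoted from Herzog--Hibi --- so your argument can only be judged on its own terms. Your overall route is the standard one (Edmonds' greedy algorithm for necessity; for sufficiency, defining $\rk(A)$ as the support function of $P$ at the $0/1$ vector of $A$ and proving submodularity via a joint-maximization lemma), and most of it is sound. In particular, the joint-maximization lemma does work, and can be done even more locally than you describe: if $v^*$ maximizes $\ell_{A\cap B}$ over the face $F$ on which $\ell_{A\cup B}$ is maximal but not over all of $P$, then some edge of $P$ at $v^*$, in a direction $e_j-e_k$ with $j\in A\cap B$ and $k\notin A\cap B$, strictly increases $\ell_{A\cap B}$; since $j\in A\cap B\subseteq A\cup B$ forces $\ell_{A\cup B}(e_j-e_k)=1-[k\in A\cup B]\ge 0$ and $\ell_{A\cup B}$ is already maximal at $v^*$, this edge must keep $\ell_{A\cup B}$ constant, hence stays in $F$ and contradicts the choice of $v^*$. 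No path-tracking is needed.

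There is, however, a genuine gap in your last step. You establish $P\subseteq \Poly(\rk)$ correctly (every inequality $\ell_A\le\rk(A)$ holds on $P$ by construction, and $\ell_{\underline{n}}$ is constant on $P$ because all edges are orthogonal to $\mathbf 1$). But the reverse inclusion does not follow from ``each facet-defining inequality of $\Poly(\rk)$ is tight on $P$'': a diamond inscribed in a square touches every facet of the square without equaling it, so tightness of all facet inequalities of the outer polytope on the inner one proves nothing. The correct way to close the argument is to iterate your joint-maximization lemma along a complete flag: for each permutation $\sigma$, take the face of $P$ maximizing $\ell_{\{\sigma(1)\}}$, within it the face maximizing $\ell_{\{\sigma(1),\sigma(2)\}}$, and so on; the same edge-direction analysis shows each successive face still meets the locus where the previous functionals are maximal, producing a point of $P$ that simultaneously maximizes $\ell_{\{\sigma(1),\dots,\sigma(j)\}}$ for all $j$. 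Its partial sums are then forced to equal $\rk(\{\sigma(1),\dots,\sigma(j)\})$, so it is exactly the greedy vertex $v^\sigma$. Hence every greedy vertex of $\Poly(\rk)$ lies in $P$, and since your necessity direction (applied to the now-verified polymatroid $\rk$) shows $\Poly(\rk)$ is the convex hull of its greedy vertices, you get $\Poly(\rk)\subseteq P$. With that repair the proof is complete.
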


For a compact convex polytope $\Pi\subset \R^n$,  its characteristic 
function $[\Pi]:\R^n\to \R$ is defined by
$$
[\Pi](x)=\left\{
\begin{array}{ll}
1 & \mbox{if $x\in\Pi$;}\\
0 &\mbox{if $x\not\in \Pi$.}
\end{array}
\right.
$$
Let ${\mathcal K}(\R^n)$ be the $\R$-vector space spanned by all $[\Pi]$
where $\Pi$ is a compact convex polytope. The {\em Euler
characteristic} is a linear function  $\chi:{\mathcal K}(\R^n)\to \R$
such that $\chi([\Pi])=1$ for every compact convex polytope $\Pi$
(see \cite[Theorem 7.4]{Barvinok} where $\chi$ is defined
for the slightly larger algebra of closed convex sets).

\begin{definition}
Suppose that $V$ is a $\Q$-vector space.
A $\Q$-linear map $f:\PolyMat\to V$ is called {\em valuative}
if it has the following property. For a finite set $X$  and
polymatroids ${\bf X}=(X,\rk_i)$, 
$i=1,2,\dots,r$ and rational numbers $a_1,\dots,a_r\in \Q$ 
such that
$$
\sum_{i=1}^r a_i [\Poly({\rk_i})]=0
$$
we have that
$$\sum_{i=1}^r a_if[{\bf X}_i]=0.
$$
Moreover, let us call $f$ {\em additive} if it is valuative
and $f([{\bf X}])=0$ whenever the polymatroid base
polytope $\Poly(\rk)$ of ${\bf X}=(X,\rk)$ has dimension $<n-1$.
\end{definition}
\begin{theorem}\label{theo:valuative}
$$
{\mathcal G}:\PolyMat\to \QSym
$$
is  valuative.
\end{theorem}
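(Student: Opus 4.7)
The plan is to reinterpret $\mathcal{G}[{\bf X}]$ in terms of the base polytope $\Poly(\rk) \subset \R^n$ and then to prove its valuativity via a combinatorial argument about ``greedy vertices.'' Each maximal chain $\underline X: \emptyset = X_0 \subset X_1 \subset \cdots \subset X_n = X$ corresponds to an ordering $\pi = (x_1,\dots,x_n)$ of $X$ with $X_i = \{x_1,\dots,x_i\}$. By the greedy algorithm for polymatroids, the lattice point $v^\pi \in \Z^n$ with $v^\pi_{x_i} := \rk(X_i) - \rk(X_{i-1})$ is a vertex of $\Poly(\rk)$, namely the unique maximizer on $\Poly(\rk)$ of any generic linear functional $\ell$ with $\ell(e_{x_1}) \gg \cdots \gg \ell(e_{x_n})$. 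Thus $r(\underline X) = (v^\pi_{x_1},\dots,v^\pi_{x_n})$, and the coefficient of $U_\alpha$ in $\mathcal{G}[{\bf X}]$ equals $\sum_\pi \mathbf{1}\bigl[v^\pi(\Poly(\rk)) = v^\pi_\alpha\bigr]$, where $v^\pi_\alpha \in \Z^n$ is the lattice point with $(v^\pi_\alpha)_{x_i} = \alpha_i$.

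The theorem therefore reduces to the following claim: \emph{for each fixed $\pi$ and $v \in \Z^n$, the function $P \mapsto \mathbf{1}[v^\pi(P) = v]$ on polymatroid base polytopes extends to a $\Q$-linear functional on the vector space spanned by indicator functions $[\Poly(\rk)]$.} To prove this, I would use the symmetric exchange axiom for polymatroid bases to derive the combinatorial characterization:
$v^\pi(P) = v$ if and only if $v \in P$ and $v + e_a - e_b \notin P$ for every pair $(a,b)$ with $\pi^{-1}(a) < \pi^{-1}(b)$.
The forward direction is immediate because any such $v+e_a-e_b$ would be lex-greater than $v$ under $\pi$; the converse applies the exchange axiom to any lex-greater lattice point in $P$ to produce a one-step upward move from $v$ inside $P$. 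Applying inclusion-exclusion on the finite set of forbidden moves then yields
\[
\mathbf{1}[v^\pi(P) = v] \;=\; \sum_{S} (-1)^{|S|}\, \mathbf{1}\bigl[\{v\} \cup \{v + e_a - e_b : (a,b) \in S\} \subseteq P\bigr],
\]
where $S$ ranges over subsets of $\{(a,b) \in X \times X : \pi^{-1}(a) < \pi^{-1}(b)\}$.

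The main obstacle is that individual summands $P \mapsto \mathbf{1}[T \subseteq P]$ are \emph{not} valuations for $|T| > 1$: a relation $\sum a_i [\Poly(\rk_i)] = 0$ can fail to give $\sum a_i \mathbf{1}[T \subseteq \Poly(\rk_i)] = 0$ when the test points in $T$ lie on opposite sides of a splitting hyperplane in a polymatroid subdivision. The resolution is that the failures of valuativity cancel pairwise across the alternating sum: for each ``bad'' subset $S$ there is a matching $S \cup \{(a',b')\}$ for which the test set $T$ gains no new active constraints (because the extra vertex $v + e_{a'} - e_{b'}$ already lies in all pieces of the subdivision by convexity), so the two terms contribute with opposite signs and cancel. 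Verifying this cancellation requires a careful analysis of how the splitting hyperplanes $\sum_{i\in A} v_i = k$ of polymatroid subdivisions interact with lattice moves $e_a - e_b$, using that such moves change $\sum_{i\in A} v_i$ by at most one. Once this is established, the aggregate inclusion-exclusion sum is a valuation on polymatroid base polytopes, and summing over $\pi$ and $\alpha$ gives the valuativity of $\mathcal{G}$.
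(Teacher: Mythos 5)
Your reduction to the greedy-vertex characterization is exactly the combinatorial heart of the paper's argument (its Lemma~\ref{lem:polyrank}, which shows that $\sum_{i\leq s}v_i=\rk(\underline{s})$ for all $s$ is equivalent to ``$v\in \Poly(\rk)$ and $v+\varepsilon(e_j-e_k)\notin\Poly(\rk)$ for all $j<k$''), and that half of your proposal is sound. The gap is in the final step, where you must convert the \emph{nonlinear} functional $[P]\mapsto \mathbf{1}[v\in P]\prod_{(a,b)}\bigl(1-\mathbf{1}[v+e_a-e_b\in P]\bigr)$ into something linear on the span of the indicator functions $[\Poly(\rk_i)]$. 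Your inclusion--exclusion expansion is a correct pointwise identity for each fixed $P$, but, as you yourself note, the summands $[P]\mapsto\mathbf{1}[T\subseteq P]$ are products of point evaluations and are not linear in $[P]$; the asserted pairwise cancellation of their failures is precisely the content of the theorem and is never established. The proposed involution $S\mapsto S\cup\{(a',b')\}$ is not well defined (which pair $(a',b')$, and why is the map sign-reversing and fixed-point-free on the ``bad'' terms?), and the justification that ``the extra vertex already lies in all pieces of the subdivision by convexity'' is an unproved assumption. Worse, the paper's definition of valuative quantifies over \emph{arbitrary} rational relations $\sum_i a_i[\Poly(\rk_i)]=0$, not only those arising from base polytope subdivisions, so an argument phrased in terms of splitting hyperplanes of a subdivision would not cover the required generality even if the cancellation were verified.

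The paper avoids all of this with one device: it replaces your finite test set $\{v+e_a-e_b\}$ by the single convex body $v+\varepsilon\Delta$, where $\Delta$ is the simplex on $e_1-e_2,\dots,e_{n-1}-e_n$, and defines $\mu_v(h)=h(v)-\lim_{\varepsilon\downarrow 0}\chi([v+\varepsilon\Delta]\cdot h)$ using the Euler characteristic $\chi$ on the space spanned by indicators of compact convex polytopes. Since $h\mapsto[v+\varepsilon\Delta]\cdot h$ and $\chi$ are both linear, $\mu_v$ is a globally defined linear functional on that space, and the greedy-vertex lemma shows that $\mu_v([\Poly(\rk)])$ is exactly your indicator $\mathbf{1}[v^{\pi}(\Poly(\rk))=v]$ (for the identity ordering; the other orderings are handled by permuting coordinates and summing). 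To salvage your route you would have to prove that your alternating sum agrees with such a globally defined linear functional, which in effect forces you back to an Euler-characteristic computation; I recommend adopting that device directly in place of the unproven cancellation claim.
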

The proof of the theorem is in the next subsection.
\begin{corollary}
Since ${\mathcal G}$ specializes to ${\mathcal H}$ and ${\mathcal P}$,
these invariants are valuative as well.
\end{corollary}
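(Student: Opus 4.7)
The plan is to exploit the general fact that valuativity is preserved by post-composition with any $\Q$-linear map. More precisely, suppose $f\colon \PolyMat\to V$ is valuative and $\psi\colon V\to V'$ is a $\Q$-linear map. If $\sum_{i=1}^r a_i[\Poly(\rk_i)]=0$ then by hypothesis $\sum_i a_i f[{\bf X}_i]=0$, and applying $\psi$ to both sides yields $\sum_i a_i (\psi\circ f)[{\bf X}_i]=0$. So $\psi\circ f$ is valuative as well. Thus the corollary reduces to exhibiting appropriate $\Q$-linear maps that take $\mathcal{G}[{\bf X}]$ to $\mathcal{H}[{\bf X}]$ and to $\mathcal{P}[{\bf X}]$.

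For $\mathcal{H}$, this is immediate. The preceding theorem provides an algebra homomorphism $\tau\colon \QSym\to \Sym[q,t]$ satisfying $\tau\circ \mathcal{G}=\mathcal{H}$. Since $\tau$ is in particular $\Q$-linear, the principle above applied with $f=\mathcal{G}$ and $\psi=\tau$ shows at once that $\mathcal{H}\colon \PolyMat\to \Sym[q,t]$ is valuative, invoking Theorem~\ref{theo:valuative}.

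For $\mathcal{P}$, we extract it from $\mathcal{H}$ by a linear coefficient operation. Fix the finite set $X$ with $|X|=n$ and suppose $\sum_{i=1}^r a_i[\Poly(\rk_i)]=0$ with $\mathbf{X}_i=(X,\rk_i)$. By the case of $\mathcal{H}$ already handled, we have
\begin{equation*}
\sum_{i=1}^r a_i\,\mathcal{H}[{\bf X}_i](q,t)=0 \quad\text{in }\Sym[q,t].
\end{equation*}
Recall from the remark after Definition~\ref{def4} that the coefficient of $t^n$ in $\mathcal{H}[{\bf X}_i](q,t)$ equals $q^{\rk_i(X)}\mathcal{P}[{\bf X}_i]$. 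Since taking the coefficient of $t^n$ is a $\Q$-linear operation on $\Sym[q,t]$, we obtain
\begin{equation*}
\sum_{i=1}^r a_i\,q^{\rk_i(X)}\mathcal{P}[{\bf X}_i]=0 \quad\text{in }\Sym[q].
\end{equation*}
Because distinct powers of $q$ are linearly independent over $\Sym$, we may split this relation according to the value $\rk_i(X)=s$, obtaining $\sum_{i:\,\rk_i(X)=s} a_i\,\mathcal{P}[{\bf X}_i]=0$ for every integer $s\geq 0$. Summing over $s$ gives $\sum_{i=1}^r a_i\mathcal{P}[{\bf X}_i]=0$, which is the desired valuativity for $\mathcal{P}$.

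There is no real obstacle here, because Theorem~\ref{theo:valuative} and the specialization theorem do all the genuine work; the only nontrivial point is the elementary observation that coefficient extraction in $t$ followed by separation by $q$-degree is $\Q$-linear and consequently preserves the vanishing relation. The same two-step strategy (apply a $\Q$-linear specialization, then extract a coefficient if necessary) would yield valuativity of any further specialization of $\mathcal{G}$, in particular of the Tutte polynomial and the rank generating function via the map $\Theta$ of the preceding section.
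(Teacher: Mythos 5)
Your proof is correct and follows essentially the same (implicit) route as the paper: valuativity of $\mathcal{G}$ (Theorem~\ref{theo:valuative}) is pushed forward through the $\Q$-linear specialization maps, $\tau$ for $\mathcal{H}$ and then a coefficient extraction for $\mathcal{P}$. The only remark worth making is that the extraction of $\mathcal{P}$ could be done even more directly by setting $q=1$ in the coefficient of $t^{|X|}$ (or by the map $\xi$ from the corollary on $\mathcal{F}$ specializing to $\mathcal{P}$), but your separation by powers of $q$ is equally valid.
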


A {\em polymatroid base decomposition} is a decomposition
\begin{equation}
\Poly(\rk)=\bigcup_{i=1}^r \Poly(\rk_i)
\end{equation}
such that
$$
\Poly(\rk_i)\cap \Poly(\rk_j)
$$
is a common face of $\Poly(\rk_i)$ and $\Poly(\rk_j)$ for $i\neq j$.
Let us call such a decomposition {\em proper} if $r>1$ and
$\Poly(\rk_i)\not\subseteq \Poly(\rk_j)$ for all $i\neq j$.
The polytope $\Poly(\rk)$ is called {\em indecomposable} if
it does not have a proper decomposition.
For a fixed base field $K$, a polymatroid is called {\em rigid} if it has
only finitely many realizations over $K$ as a subspace arrangement up to isomorphism.
The work of Lafforgue implies that a realizable matroid is {\em rigid} if and only if
its matroid base polytope is indecomposable (see~\cite{Lafforgue,Lafforgue2}).
It is therefore of interest to know whether a given matroid polytope is
indecomposable. Valuative and additive invariants can be useful
to determine whether a matroid polytope is decomposable.
For a valuative invariant $f$, we have, by the inclusion-exclusion principle
$$
f(\rk)=\sum_{k=1}^r (-1)^{k-1}\sum_{i_1<i_2<\cdots<i_k}f(\rk_{i_1,i_2,\dots,i_k})
$$
where $\rk_{i_1,\dots,i_k}$ is the rank function whose polymatroid polytope
is
$$\Poly(\rk_{i_1})\cap \cdots\cap \Poly(\rk_{i_k}).
$$
If $f$ is additive, then we have
$$
f(\rk)=\sum_{i=1}^r f(\rk_i).
$$
Additive invariants
can also be constructed from the Billera-Jia-Reiner quasi-symmetric function (see~\cite{BJR}).
\begin{conjecture}\label{conjUniversal}
Is ${\mathcal G}$ universal with respect to the valuative property?
I.e., is it true that for every 
$\Q$-linear valuative map  $f:\PolyMat\to V$
there exists a $\Q$-linear map $\psi:\QSym\to V$ such that $\psi\circ {\mathcal G}=f$?
\end{conjecture}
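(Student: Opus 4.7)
The plan is to establish the universality conjecture by showing that $\ker\mathcal{G}$ equals the subspace $\mathcal{V}\subset\PolyMat$ spanned by all valuative relations $\sum a_i[{\bf X}_i]$ with $\sum a_i[\Poly(\rk_i)]=0$. Since Theorem~\ref{theo:valuative} gives $\mathcal{V}\subseteq\ker\mathcal{G}$, the entire content of the conjecture is the reverse inclusion $\ker\mathcal{G}\subseteq\mathcal{V}$: if that holds, then for any valuative $f:\PolyMat\to V$, the induced map on $\PolyMat/\mathcal{V}\hookrightarrow\QSym$ extends to the required $\psi$ (for instance by splitting this injection $\Q$-linearly and composing with $f$).

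To prove $\ker\mathcal{G}\subseteq\mathcal{V}$, I would work ground-set by ground-set. Fix $n$ and let $\PolyMat_n$ be the span of polymatroids on $\underline{n}$. The natural strategy is to produce a family of \emph{Schubert-type} polymatroids $\{{\bf S}_r\}$ indexed by vectors $r=(r_1,\dots,r_n)\in\N^n$ satisfying three properties: (i) each $\Poly(\rk_{{\bf S}_r})$ is a ``corner'' polytope in a standard hyperplane arrangement on $\R^n$ (so these polytopes tile, or can be combined to tile, every polymatroid polytope); (ii) $\mathcal{G}[{\bf S}_r]=U_r+\text{(terms }U_{r'}\text{ with }r'\prec r\text{)}$ for a fixed total order $\prec$ on compositions; (iii) ${\bf S}_r$ has only one maximal chain contributing the top term, obtained by ordering the ground set so that the rank increments along $\emptyset\subset\{1\}\subset\{1,2\}\subset\cdots$ are exactly $r_1,\dots,r_n$. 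Property (iii) can be arranged by defining $\rk_{{\bf S}_r}(A)=\sum_{i\in A}r_i$ capped appropriately by submodularity, so that only the identity chain realizes the sequence $r$ while other chains give strictly smaller sequences.

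With such a family in hand, the argument splits into two parts. \emph{Linear independence} of $\{\mathcal{G}[{\bf S}_r]\}$ is automatic from property (ii) by the triangular form in the basis $\{U_r\}$ of $\QSym$. \emph{Spanning modulo $\mathcal{V}$} is the hard step: I must show every $[{\bf X}]\in\PolyMat_n$ can be written, modulo valuative relations, as a $\Q$-linear combination of the $[{\bf S}_r]$. This should follow by slicing $\Poly(\rk)$ with the hyperplanes $x_i=c$ for all integers $c$ in all coordinate directions: each full slicing expresses $[\Poly(\rk)]$ via inclusion–exclusion as an integer combination of characteristic functions of intersections, and property (i) identifies those intersections with the Schubert polytopes $\Poly({\bf S}_r)$. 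Since the slicing identity is by construction a valuative relation, it descends to $\PolyMat_n/\mathcal{V}$.

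The principal obstacle is step (i)/spanning: proving that the characteristic function of an arbitrary polymatroid base polytope really is expressible as an integer combination of characteristic functions of Schubert-type polytopes through polytope identities that come from actual polymatroids (not merely arbitrary convex regions). In the matroid case this is tractable because matroid base polytopes are $(0,1)$-polytopes lying in a single hypersimplex; for polymatroids one must manage the vertices moving in $\N^n$, and submodularity must be preserved at each slicing step so that every intermediate piece is again a polymatroid polytope (guaranteed by the theorem of \cite{HeHi} cited just after the conjecture, which requires verifying edge-directions remain of the form $e_j-e_k$). A secondary subtlety is handling lower-dimensional faces and repeated coincidences, which introduce cancellations in the inclusion–exclusion; here the additivity half of the definition, together with induction on $\dim\Poly(\rk)$, should resolve the boundary contributions.
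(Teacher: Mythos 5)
First, note that the statement you are addressing is posed in the paper as Conjecture~\ref{conjUniversal}: the paper offers no proof, so there is no argument of the author's to compare yours against --- the only question is whether your proposal actually closes the conjecture, and it does not. Your opening reduction is sound: since any valuative $f$ kills the subspace $\mathcal{V}\subseteq\PolyMat$ spanned by the relations $\sum_i a_i[{\bf X}_i]$ with $\sum_i a_i[\Poly(\rk_i)]=0$, and since the quotient map $\PolyMat\to\PolyMat/\mathcal{V}$ is itself valuative, the conjecture is equivalent to the inclusion $\ker\mathcal{G}\subseteq\mathcal{V}$, the reverse inclusion being Theorem~\ref{theo:valuative}. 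That is the right first step.

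Everything after that is a program rather than a proof, and the one concrete construction you offer does not have the properties you need. If $\rk_{{\bf S}_r}(A)=\sum_{i\in A}r_i$, then ${\bf S}_r$ is a direct sum of single-element polymatroids, its base polytope is the single point $r$ (so it cannot tile anything of positive dimension), and every maximal chain realizes some permutation of $r$, so that $\mathcal{G}[{\bf S}_r]=\sum_{\sigma}U_{(r_{\sigma(1)},\dots,r_{\sigma(n)})}$ is fully symmetric rather than triangular with leading term $U_r$; the phrase ``capped appropriately by submodularity'' is not a definition, and properties (i)--(iii) are postulated, not verified, for any actual family. Beyond that, the two load-bearing claims remain unestablished: (a) that slicing $\Poly(\rk)$ by integer coordinate hyperplanes produces pieces --- and all the intersections occurring in the inclusion--exclusion --- that are again polymatroid base polytopes and moreover coincide with members of your family; and (b) that the resulting identities exhaust $\PolyMat_n$ modulo $\mathcal{V}$, which implicitly requires knowing $\dim(\PolyMat_n/\mathcal{V}_n)$, itself an open combinatorial question at the level of this paper. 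You have correctly isolated where the difficulty lies, but identifying the obstacle is not the same as overcoming it; as written, this is a plausible research plan for the conjecture, not a proof of it.
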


\subsection{The proof of Theorem~\ref{theo:valuative}}
The basis vectors of $\R^n$ are denoted by
$e_1,\dots,e_n$.
Let $\Delta$ be the $(n-2)$-dimensional simplex
spanned by $e_1-e_2,e_2-e_3,\dots,e_{n-1}-e_{n}$.
\begin{lemma}\label{lem:polyrank}
Choose $\varepsilon$ such that $0<\varepsilon<1$.
For $v\in \Z^n$, and a rank function $\rk:\Power(X)\to \R$, 
the following  statements are equivalent.
\begin{enumerate}
\item $\sum_{i=1}^s v_i=\rk(\underline{s})$ for $s=1,2,\dots,n$;
\item $v\in \Poly(\rk)$, and $v+\varepsilon(e_j-e_k)\not\in \Poly(\rk)$ for all $j<k$;
\item $(v+\varepsilon\Delta)\cap \Poly(\rk)=\emptyset$ and  $v\in \Poly(\rk)$.
\end{enumerate}
\end{lemma}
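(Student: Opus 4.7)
The plan is to prove the lemma by establishing the cycle of implications $(1)\Rightarrow(3)\Rightarrow(2)\Rightarrow(1)$. Throughout the argument I will use that for a polymatroid the rank function takes integer (hence rational) values, together with the fact that $v\in\Z^n$ and $0<\varepsilon<1$, so any strict inequality of the form $\sum_{i\in A}v_i+\varepsilon>\rk(A)$ is forced to come from an equality $\sum_{i\in A}v_i=\rk(A)$. The submodular inequality will be used in two different guises: once as the telescoping ``greedy'' estimate, and once as the standard fact that tight sets form a sublattice.

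For $(1)\Rightarrow(3)$, I first show $v\in\Poly(\rk)$. For $A=\{a_1<\dots<a_k\}$ the identity $v_{a_j}=\rk(\underline{a_j})-\rk(\underline{a_j-1})$ combined with submodularity applied to $\{a_1,\dots,a_{j-1}\}\subseteq\underline{a_j-1}$ gives $v_{a_j}\le \rk(\{a_1,\dots,a_j\})-\rk(\{a_1,\dots,a_{j-1}\})$, which telescopes to $\sum_{i\in A}v_i\le \rk(A)$. For the simplex part, take $w=v+\varepsilon\sum_{i=1}^{n-1}\lambda_i(e_i-e_{i+1})$ with $\lambda_i\ge 0$ and $\sum\lambda_i=1$. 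A direct computation shows $\sum_{i=1}^s w_i=\rk(\underline{s})+\varepsilon\lambda_s$ for each $s<n$. Since some $\lambda_s$ must be positive, the constraint for $A=\underline{s}$ is violated, so $w\notin\Poly(\rk)$.

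For $(3)\Rightarrow(2)$, given $j<k$ I observe that the specific point $p=v+\frac{\varepsilon}{k-j}(e_j-e_k)$ lies in $v+\varepsilon\Delta$ (take $\lambda_i=1/(k-j)$ for $j\le i<k$ and zero otherwise). By $(3)$, $p\notin\Poly(\rk)$, so there is some $A$ with $\sum_{i\in A}p_i>\rk(A)$. Since $v\in\Poly(\rk)$ this forces $j\in A$, $k\notin A$, and the resulting strict inequality $\sum_{i\in A}v_i>\rk(A)-\varepsilon/(k-j)>\rk(A)-1$ combined with integrality yields $\sum_{i\in A}v_i=\rk(A)$. Substituting back, $\sum_{i\in A}(v+\varepsilon(e_j-e_k))_i=\rk(A)+\varepsilon$, so $v+\varepsilon(e_j-e_k)\notin\Poly(\rk)$, as required.

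The main obstacle is $(2)\Rightarrow(1)$, which is the only implication not amenable to a purely local computation. My plan is to introduce the family $\mathcal{T}=\{A\subseteq\underline{n}:\sum_{i\in A}v_i=\rk(A)\}$ of tight sets and prove it is closed under unions and intersections: for $A,B\in\mathcal{T}$, the inequalities $\sum_{i\in A\cup B}v_i\le \rk(A\cup B)$, $\sum_{i\in A\cap B}v_i\le \rk(A\cap B)$, and submodularity force all inequalities in the chain $\rk(A)+\rk(B)=\sum_{A}v_i+\sum_{B}v_i=\sum_{A\cup B}v_i+\sum_{A\cap B}v_i\le \rk(A\cup B)+\rk(A\cap B)\le \rk(A)+\rk(B)$ to be equalities. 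Now, for each $j\le s$ and $k>s$, the hypothesis $(2)$ together with the same integrality argument as in $(3)\Rightarrow(2)$ yields a tight $A_{jk}\in\mathcal{T}$ containing $j$ and excluding $k$. Setting $B_j=\bigcap_{k>s}A_{jk}\in\mathcal{T}$, each $B_j$ satisfies $j\in B_j\subseteq\underline{s}$, and then $\bigcup_{j\le s}B_j\in\mathcal{T}$ is sandwiched between $\underline{s}$ and itself, so $\underline{s}\in\mathcal{T}$. This delivers $(1)$ and closes the cycle.
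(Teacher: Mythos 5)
Your proof is correct, and it organizes the equivalences differently from the paper in a way that pays off. The paper proves $(1)\Leftrightarrow(2)$ and $(2)\Leftrightarrow(3)$ separately; its $(2)\Rightarrow(3)$ direction is by far the longest part, requiring that $v$ be a vertex of $\Poly(\rk)$, that every edge at $v$ point along some $e_k-e_j$ with $j<k$, and that the resulting cone containing $\Poly(\rk)$ be disjoint from $v+\varepsilon\Delta$. Your direct computation $\sum_{i=1}^s w_i=\rk(\underline{s})+\varepsilon\lambda_s$ for a general point $w$ of $v+\varepsilon\Delta$ replaces all of that with one line, at the cost of starting from the stronger hypothesis (1) rather than (2) --- harmless, since you close the cycle $(1)\Rightarrow(3)\Rightarrow(2)\Rightarrow(1)$. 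Your $(3)\Rightarrow(2)$ is also more careful than the paper's one-sentence version: the paper asserts that $e_j-e_k\in\Delta$ for all $j<k$, which holds only for $k=j+1$ (in general only $\tfrac{1}{k-j}(e_j-e_k)$ lies in $\Delta$), and your detour through the rescaled point followed by the integrality/tightness argument is exactly what is needed to recover the full-strength conclusion $v+\varepsilon(e_j-e_k)\notin\Poly(\rk)$. The $(2)\Rightarrow(1)$ step is essentially the paper's argument (tight sets form a sublattice, the violating sets $A_{jk}$ with $j\in A_{jk}$, $k\notin A_{jk}$), with your single expression $\underline{s}=\bigcup_{j\le s}\bigcap_{k>s}A_{jk}$ replacing the paper's induction on $s$. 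One caveat applies equally to both proofs: the integrality step needs $\rk$ to take integer values even though the statement allows $\rk:\Power(X)\to\R$; you flag this assumption explicitly, which is the right thing to do since the lemma is only ever applied to polymatroids.
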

\begin{proof}
$(1)\Rightarrow (2)$:
Suppose that (1) holds.
Suppose that $A=\{i_1,\dots,i_s\}$ with $i_1<\cdots<i_s$.
Then we have
\begin{equation}\label{eq:rks}
\rk(\{i_1,\dots,i_t\})-\rk(\{i_1,i_2,\dots,i_{t-1}\})\leq
\rk(\{1,2,\dots,i_t\})-\rk(\{1,2,\dots,i_t-1\})=v_{i_t}
\end{equation}
by the submodular property of the rank function.

Summing (\ref{eq:rks}) for $t=1,2,\dots,s$ gives
$$
\rk(\{i_1,\dots,i_s\})\leq v_{i_1}+\cdots+v_{i_s}=\sum_{i\in A}v_i.
$$
This implies that $v\in \Poly(\rk)$.
If $j<k$ and $w=v+\varepsilon(e_j-e_k)$, then we have 
$$
\sum_{i=1}^jw_i=\sum_{i=1}^j v_i+\varepsilon=\rk(\underline{j})+\varepsilon>\rk(\underline{j}),
$$
so $w\not\in \Poly(\rk)$.
This proves that (2) holds.

$(2)\Rightarrow(1)$: Conversely, assume that (2) holds. A subset $S\subseteq \underline{n}$ 
is called {\em tight}
if $\sum_{i\in S}v_i=\rk(S)$. Clearly, $\underline{n}$ and $\emptyset$ are tight.
If $S,T$ are tight, then
\begin{multline}
\rk(S\cup T)+\rk(S\cap T)\leq \rk(S)+\rk(T)=
\sum_{i\in S}v_i+\sum_{i\in T}v_i=\\
=\sum_{i\in S\cap T}v_i+\sum_{i\in S\cup T}v_i\leq
\rk(S\cap T)+\rk(S\cup T),
\end{multline}
so all inequalities are equalities, and $S\cup T$ and $S\cap T$ are tight as well.

Suppose that $j<k$ and set $w=v+\varepsilon(e_j-e_k)$. 
Because $g\not\in \Poly(\rk)$, there exists a set $A_{j,k}$
such that
$$
\sum_{i\in A_{j,k}} w_i>\rk(A_{j,k}).
$$
Since 
$$
\sum_{i\in A_{j,k}}v_i\leq \rk(A_{j,k}),
$$
 we must have $j\in A_{j,k}$ and $k\not\in A_{j,k}$.
We obtain
$$
\rk(A_{j,k})\geq \sum_{i\in A_{j,k}}v_i= \sum_{i\in A_{j,k}} w_i-\varepsilon>\rk(A_{j,k})-\varepsilon.
$$
Because $v$ is an integer vector, the first inequality is an equality and $A_{j,k}$ is tight.
To prove (1) we need to show that $\underline{i}$ is tight
for $i=0,1,\dots,n$. We do this by induction on $i$, the case $i=0$
being trivial.
Suppose that $i>0$ and $\underline{i-1}$ is tight.
Then $\underline{i-1}\cup A_{i,k}$ is tight for  $k=i+1,\dots,n$.
We have
$$
\underline{i}=\bigcap_{k=i+1}^n (\underline{i-1}\cup A_{i,k})
$$
because $\underline{i}\subseteq \underline{i-1}\cup A_{i,k}$ for all $i$, and
$k\not \in \underline{i-1}\cup A_{i,k}$. Hence $\underline{i}$ is tight.

$(3)\Rightarrow (2)$: This implication is clear because $(e_j-e_k)\in \Delta$
for all $j<k$.

$(2)\Rightarrow (3)$: Suppose $v\in \Poly(\rk)$ and $v+\varepsilon (e_j-e_k)\not\in \Poly(\rk)$ for all $j<k$.
Suppose that $v+\delta (e_j-e_k)\in \Poly(\rk)$ for some $j,k$ with $j<k$ and  $\delta>0$.
Set $z:=e_j-e_k$.
If the inequality
\begin{equation}\label{eq:vis}
\sum_{i\in A}v_i\leq \rk(A).
\end{equation}
is an equality, then
$$
\rk(A)+\delta\sum_{i\in A}z_i=\sum_{i\in A}(v_i+\delta z_i)\leq \rk(A)
$$
because $v+\delta z\in \Poly(\rk)$. So we obtain
$$
\sum_{i\in A}z_i\leq 0,
$$
Therefore, we have
$$
\sum_{i\in A} (v_i+\varepsilon z_i)\leq \rk(A).
$$

If (\ref{eq:vis}) it is not tight, then
$$
\sum_{i\in A}v_i\leq \rk(A)-1
$$
and
$$
\sum_{i\in A}(v_i+\varepsilon z_i)\leq \rk(A)-1+\varepsilon\sum_{i\in A}z_i\leq
\rk(A)-1+\varepsilon \leq \rk(A).
$$
So we conclude that
$$
\sum_{i\in A}(v_i+\varepsilon z_i)\leq \rk(A)
$$
for all subsets $A\subseteq \underline{n}$.
So 
 $v+\varepsilon z\in \Poly(\rk)$, but this contradicts our assumptions.
 We conclude that $v+\delta (e_j-e_k)\not\in \Poly(\rk)$ for every $j<k$ and every $\delta>0$.
 
 Suppose that $v$ lies in the interior of a face of positive dimension of $\Poly(\rk)$. This
 face is parallel to $e_j-e_k$ for some $j<k$. This means that there exists a $\delta>0$
 such that $v+\delta(e_j-e_k),v-\delta(e_j-e_k)\in \Poly(\rk)$ for some $\delta>0$.
 This gives a contradiction, therefore $v$ must be a vertex of the polytope $\Poly(\rk)$.
 Let $v_1,v_2,\dots,v_r$ be other vertices of $\Poly(\rk)$ such that the
 edges of $\Poly(\rk)$ meeting at $v$ are $vv_1,vv_2,\dots,vv_r$.
 For every $v_i$, $v-v_i$ is a positive multiple of $e_k-e_j$ for some $j<k$.
 This means that $\Poly(\rk)$ is contained in cone
 $$
 C:=v+\R_{\geq 0}(e_2-e_1)+\R_{\geq 0}(e_3-e_2)+\cdots+\R_{\geq 0}(e_{n}-e_{n-1})
 $$
 where $\R_{\geq 0}$ denotes the nonnegative real numbers. We conclude that
 $$
 (v+\varepsilon \Delta)\cap \Poly(\rk)\subseteq (v+\varepsilon \Delta)\cap C=\emptyset.
 $$
 So (3) follows.
 \end{proof}
For $v\in \Z^n$, define a valuation $\mu_v:{\mathcal K}(\R)\to \R$ by 
$$\mu_v(h)=h(v)-\lim_{\varepsilon\downarrow 0}\chi([v+\varepsilon\Delta]\cdot h)$$
Let
$$
r=(r_1,r_2,\dots,r_n)
$$
where $r_i=\rk(\underline{i})-\rk(\underline{i-1})$ for all $i$.
\begin{corollary}
We have
$$
\mu_v([Poly(\rk)])=\left\{
\begin{array}{ll}
1 & \mbox{if $v=r$}\\
0 & \mbox{otherwise.}
\end{array}\right.
$$
\end{corollary}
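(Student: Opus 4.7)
The strategy is to evaluate $\mu_v([\Poly(\rk)])$ directly from its definition, unpacking the characteristic functions and appealing to the trichotomy afforded by Lemma~\ref{lem:polyrank}. The only analytic input beyond the lemma will be the fact that a non-empty compact convex set has Euler characteristic equal to $1$.

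First I would observe that for any $\varepsilon\in(0,1)$ the set $(v+\varepsilon\Delta)\cap\Poly(\rk)$ is compact and convex, being the intersection of two compact convex sets; hence its Euler characteristic is either $1$ (if non-empty) or $0$ (if empty). Thus
$$
\mu_v([\Poly(\rk)])\;=\;[\Poly(\rk)](v)-\chi\bigl([\,(v+\varepsilon\Delta)\cap\Poly(\rk)\,]\bigr)
$$
is a difference of two $\{0,1\}$-valued quantities, and the whole computation reduces to deciding (a) whether $v\in\Poly(\rk)$, and (b) whether $(v+\varepsilon\Delta)\cap\Poly(\rk)$ is empty.

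Next I would split into three cases. If $v=r$, then condition~(1) of Lemma~\ref{lem:polyrank} holds by the very definition of $r$, so by the equivalence (1)$\Leftrightarrow$(3) both $v\in\Poly(\rk)$ and $(v+\varepsilon\Delta)\cap\Poly(\rk)=\emptyset$, giving $\mu_v=1-0=1$. If $v\in\Poly(\rk)$ but $v\neq r$, then condition~(1) fails, so (3) fails; since the clause $v\in\Poly(\rk)$ in (3) is satisfied, the first clause must fail, i.e.\ $(v+\varepsilon\Delta)\cap\Poly(\rk)\neq\emptyset$, and $\mu_v=1-1=0$. Finally, if $v\notin\Poly(\rk)$, closedness of $\Poly(\rk)$ yields a neighbourhood of $v$ disjoint from it, so for all sufficiently small $\varepsilon>0$ the intersection is empty and $\mu_v=0-0=0$.

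There is no genuinely hard step: the corollary is essentially an unpacking of Lemma~\ref{lem:polyrank}. The only mild point to verify is that the limit $\varepsilon\downarrow 0$ in the definition of $\mu_v$ stabilises, which is immediate in each case: in cases~1 and~2 the Euler characteristic takes the same value for every $\varepsilon\in(0,1)$ (since Lemma~\ref{lem:polyrank} is stated uniformly in such $\varepsilon$), while in case~3 it suffices to take $\varepsilon$ smaller than the distance from $v$ to the closed polytope $\Poly(\rk)$.
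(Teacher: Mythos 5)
Your proof is correct and follows essentially the same route as the paper's: the case $v=r$ uses the implication $(1)\Rightarrow(3)$ of Lemma~\ref{lem:polyrank}, the case $v\in \Poly(\rk)$, $v\neq r$ uses $(3)\Rightarrow(1)$ to see the intersection is a nonempty compact convex set of Euler characteristic $1$, and the case $v\notin\Poly(\rk)$ uses closedness. No gaps.
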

\begin{proof}
Suppose that $v=r$.
By Lemma~\ref{lem:polyrank}, we have
$v\in \Poly(\rk)$ and $(v+\varepsilon\Delta)\cap \Poly(\rk)=\emptyset$.
Therefore, we get
$$\chi([v+\varepsilon \Delta]\cdot [\Poly(\rk)])=\chi([(v+\varepsilon\Delta)\cap \Poly(\rk)])=
\chi([\emptyset])=\chi(0)=0
$$
and $[\Poly(\rk)](v)=1$, so $\mu_v([\Poly(\rk)])=1$.

Suppose that $v\neq r$.
Assume that  $v\not\in \Poly(\rk)$. Since $\Poly(\rk)$ is closed,
there exists a $\delta>0$ such that
$$
(v+(\varepsilon\Delta))\cap \Poly(\rk)
$$
for all $\varepsilon$ with $0<\varepsilon<\delta$. 
This implies that $\mu_v([\Poly(\rk)])=0$.

Suppose that $v\in \Poly(\rk)$. Then 
 $(v+\varepsilon\Delta)\cap \Poly(\rk)$ is a closed nonempty convex polytope.
Hence we have
$$
\chi([v+\varepsilon\Delta]\cdot [\Poly(\rk)])=1.
$$
Therefore, we conclude that $\mu_v([\Poly(\rk)])=1-1=0$.
\end{proof}

\begin{proof}[Proof of Theorem~\ref{theo:valuative}]
The symmetric group $\Sigma_n$ acts on $\R^n$ by permuting the coordinates.
Define 
$$\mu_v^\sigma(h)=\mu_v(h\circ \sigma)$$ 
for every $\sigma\in \Sigma_n$ and every $h\in {\mathcal K}(\R)$.
We have that
\begin{multline}
\mu_v^\sigma([\Poly(\rk)])=\mu_v([\Poly(\rk\circ \sigma)])=\\
=\left\{
\begin{array}{ll}
1 & \mbox{if $v_i=\rk(\{\sigma(1),\dots,\sigma(i)\})-\rk(\{\sigma(1),\dots,\sigma(i-1)\})$ for all $i$;}\\
0 & \mbox{otherwise.}\end{array}\right.
\end{multline}
Define
$$
M_v=\sum_{\sigma\in \Sigma_n} \mu_v^\sigma.
$$
From the definition of ${\mathcal G}$ follows that 
$$
{\mathcal G}[{\bf X}]=\sum_{v}M_v([\Poly(\rk)])U_v.
$$
From  the linearity of $M_v$ and ${\mathcal G}$ it follows that
$$
\sum_{i}a_i{\mathcal G}[(\{1,\dots,n\},\rk_i)]=0
$$
whenever
$$
\sum_{i} a_i [\Poly(\rk_i)]=0.
$$
This completes the proof of the theorem.

\end{proof}
\section{Future directions}
For a polymatroid ${\bf X}$ we defined symmetric functions ${\mathcal P}[{\bf X}]$ and ${\mathcal H}[{\bf X}]$. In the case where the polymatroid comes from
a subspace arrangement,  we gave interpretations of the coefficients
of these symmetric functions
in terms of the Hilbert series and the minimal free resolution of 
the associated product ideal, and in terms of the polarized Schur functor.
We hope for simililar interpretations and nonnegativity results in the case
where the polymatroid is not realizable
(Conjecture~\ref{conNonneg}).
 We also defined a quasi-symmetric function ${\mathcal G}[{\bf X}]$.
This invariant has many interesting properties, and it specializes to ${\mathcal P}[{\bf X}]$, ${\mathcal H}[{\bf X}]$
and to the Billera-Jia-Reiner quasi-symmetric function ${\mathcal F}[{\bf X}]$. 
We would like to know whether ${\mathcal G}[{\bf X}]$ specializes to Speyer's invariant 
in~\cite{Speyer} (Conjecture~\ref{conjSpeyer}).
The invariant ${\mathcal G}$ behaves
valuatively with respect to (poly-)matroid base polytope decompositions. We wonder whether 
${\mathcal G}$ is universal with this property (Conjecture~\ref{conjUniversal}).


\begin{thebibliography}{20}
\bibitem{ABS}M.~Aguiar, N.~Bergeron, F.~Sottile, 
{\it Combinatorial Hopf algebras and generalized Dehn-Sommerville relations},
Compos. Math.~{\bf 142} (2006), no.~1, 1--30. 
\bibitem{Barvinok}A.~Barvinok, {\it A Course in Convexity}, Graduate Studies
in Mathematics~{\bf 54}, AMS, 2002.
\bibitem{BJR}L. J.~Billera, X. Jia, V.~Reiner, {\it A quasisymmetric function for matroids},
preprint, {\tt arXiv:math/0606646}, to appear in European J. of Comb.
\bibitem{Brylawski}T.~H.~Brylawski, {\it The Tutte-Grothendieck ring}, Algebra Universalis~{\bf 2} (1972),
375--388.
\bibitem{Brylawski2}T.~H.~Brylawski, {\it Intersection Theory for graphs}, J.~Combin. Theory Ser.~{\bf 3} (1981), no.~2, 233-246.
\bibitem{Conca} A.~Conca, {\it Linear spaces, transversal polymatroids and ASL domains},
J. Algebraic Combin.~{\bf 25} (2007), no.~1, 25--41.
 \bibitem{CH}A.~Conca, J.~Herzog, {\it
Castelnuovo-Mumford regularity of products of ideals},
Collect. Math.~{\bf 54} (2003), no. 2, 137--152.
\bibitem{Crapo}H.~Crapo, {\it The Tutte polynomial}, Aequationes Math.~{\bf 3} (1969), 211--229.
\bibitem{CS}H.~Crapo, W.~Schmitt, {\it A free subalgebra of the algebra
of matroids}, European J. of Comb.~{\bf 26}, no~7, 1066--1085.
\bibitem{CS2}H.~Crapo, W.~Schmitt, {\it The free product of matroids}, 
European J. of Comb.~{\bf 26} (2005), no.~7, 1060--1065.
\bibitem{CS3}H.~Crapo, W.~Schmitt, {\it A unique factorization theorem for matroids}, J. of Comb. Theory, Series A~{\bf 112} (2005), no.~2, 222--249. 
\bibitem{Derksen}
 H.~Derksen, {\it  Hilbert series of subspace arrangements}, J. Pure Appl. Algebra~{\bf 209} (2007), 
 no.~1, 91--98.
 \bibitem{DS1}H.~Derksen, J.~Sidman, {\it A sharp bound for the Castelnuovo-Mumford regularity of subspace arrangements},  Adv. Math.~{\bf 172} (2002), no.~2, 151--157.
  \bibitem{DS2}H.~Derksen, J.~Sidman, {\it Castelnuovo-Mumford regularity by approximation},
 Adv. Math.~{\bf 188} (2004), no.~1, 104--123.
 \bibitem{Eisenbud}D.~Eisenbud, {\it Commutative Algebra with a View Toward Algebraic geometry},
 Graduate Texts in mathematics~{\bf 150}, Springer, 1995.
 \bibitem{Fulton}W.~Fulton,{\it Young Tableaux}, London Mathematical Society Student Texts~{\bf 35},
Cambridge University Press, 1997.
 \bibitem{Gessel}I.~Gessel,
{\it Multipartite $P$-partitions and inner products of skew Schur functions},
 Combinatorics and algebra (Boulder, Colo., 1983), 289--317, 
Contemp. Math.~{\bf 34}, Amer. Math. Soc., Providence, RI, 1984.  
\bibitem{GKLLBRT} I.~Gelfand, D. ~Krob, A.~Lascoux, B.~Leclerc, V.~Retakh,
 J.-Y. Thibon, {\it 
Noncommutative symmetric functions},
Adv. Math.~{\bf 112} (1995), no.~2, 218--348. 
\bibitem{GM}
M.~Goresky,  R.~MacPherson, 
{\it Stratified Morse Theory}, 
Ergebnisse der Mathematik und ihrer Grenzgebiete (3)~{\bf 14}, Springer, Berlin, 1988.  
\bibitem{Haz1}M.~Hazewinkel,
 {\it Symmetric functions, noncommutative symmetric functions, and quasisymmetric functions}, 
Monodromy and differential equations (Moscow, 2001),
Acta Appl. Math.~{\bf 75} (2003), no. 1-3, 55--83. 
\bibitem{Haz2}M.~Hazewinkel,
 {\it Symmetric functions, noncommutative symmetric functions and quasisymmetric functions. II},
Acta Appl. Math.~{\bf 85} (2005), no.~1--3, 319--340.  

 
   \bibitem{Eisenbud} D.~Eisenbud, {\it Commutative Algebra --- 
with a View Toward Algebraic Geometry}, Graduate Texts in Mathematics~{\bf 150}, Springer, 1995.
\bibitem{GS} D.~D.~Gebhard, B.~E.~Sagan, {\it A chromatic symmetric function in 
noncommutating variables}, J. Alg. Combinatorics~{\bf 2} (2001), 227--255.
\bibitem{HKT} P.~Hacking, S.~Keel, J.~Tevelev, {\it Compactifications
of the moduli space of hyperplane arrangements}, {\tt math.AG/0501227}.
\bibitem{HeHi}J.~Herzog, T.~Hibi, {\it Discrete polymatroids}, J.~Algebraic Combin.~{\bf 16} (2002),  239--268.
\bibitem{Hilbert}D.~Hilbert, {\it \"Uber die Theorie von algebraischen Formen}, Math. Ann.~{\bf 36} (1890), 313--373.
\bibitem{McD} I. G. MacDonald, {\it Symmetric Functions and Hall Polynomials}, Oxford Mathematical Monographs,
Oxford: Clarendon Press, 1995.
\bibitem{Kapranov}M.~Kapranov, {\it Chow quotients of Grassmannians},
I.~I.~M.~Gel'fand Seminar, 29--110, Adv. Soviet Math.~{\bf 16},
Amer. Math. Soc, Providence, RI, 1993.
\bibitem{Knuth}D.~E.~Knuth, {\it The Art of Computer Programming, Volume 1}, third edition, Addison-Wesley,1997.
\bibitem{Lafforgue} L.~Lafforgue, {\it Pavages des simplixes, sch\'emas de graphes recoll\'es et compactification des $\operatorname{PGL}_r^{n+1}/\operatorname{PGL}_r$}, Invent. Math.~{\bf 136}
(1999), no.~1, 233--271.
\bibitem{Lafforgue2}L.~Lafforgue, {\it Chirurgie des Grassmanniennes}, CRM Monograph Series~{\bf 19}, American Mathematical Society, Providence, RI, 2003.
\bibitem{Luoto}K.~Luoto, {\it A matroid-friendly basis for quasisymmetric functions}, preprint, {\tt math.CO/0704.0836}, to appear in J. of Comb. Theory.
\bibitem{MiMo}J.~W.~Milnor, J.~C.~Moore, {\it On the structure of Hopf algebras}, Ann. of Math.~{\bf 81} (1965), 211--264.
\bibitem{NW}S.~D.~Noble, D.~J.~A.~Welch, {\it A weighted graph polynomial from chromatic
invariants of knots}, Ann.~Inst.~Fourier (Grenoble)~{\bf 49} (1999), no.~3, 1057--1087.
\bibitem{OS}
P.~Orlik, L.~Solomon, {\it 
Combinatorics and topology of complements of hyperplanes},
Invent. Math.~{\bf 56} (1980), no.~2, 167--189. 
\bibitem{Oxley}J. G.~Oxley, {\it Matroid Theory}, Oxford University Press, New York, 1992.
\bibitem{Sarmiento} I.~Sarmiento, {\it The polychromate and a chord diagram polynomial}, Ann. Comb.~{\bf 4} (2000),
227--236.
\bibitem{Schmitt}W.~Schmitt, {\it Incidence Hopf algebras},
J. Pure Appl. Algebra~{\bf 96} (1994), 299--330.
\bibitem{Sidman}J.~Sidman, {On the Castelnuovo-Mumford Regularity of Subspace Arrangements},
Ph.~D. Thesis, University of Michigan, 2002.
\bibitem{Speyer}D.~E Speyer, {\it A Matroid invariant via the $K$-Theory of the Grassmannian},
preprint, {\tt arXiv:math.AG/0603551}.
\bibitem{Stanley}R.~Stanley, {\it A symmetric function generalization of the chromatic polynomial of a graph},
 Advances in Math.~{\bf 111} (1995), 166--194. 
\bibitem{Terao}H. Terao, {\it Generalized exponents of a free arrangement of hyperplanes and Shepard-Todd-Brieskorn formula}, Invent. Math.~{\bf 63} (1981), 159--179.
\bibitem{Tutte}W. T.~Tutte, {\it A contribution to the theory of chromatic polynomials}, Canadian J. Math.~{\bf 6} (1954), 80--91.
\bibitem{Welch}D. J. A.~Welch, {\it Matroid Theory}, Academic Press, New York, 1976.
\bibitem{White}N.~White, {\it Matroid applications}, Encyclopedia of Mathematics and its Applications ~{\bf 40}, Cambridge University Press, Cambridge, 1992.
\bibitem{Zas}T.~Zaslavsky, {\it 
Counting the faces of cut-up spaces},
Bull. Amer. Math. Soc~{\bf  81} (1975), no.~5, 916--918. 
\end{thebibliography}
\end{document}